\newcommand{\N}{\mathbb{N}}
\newcommand{\Q}{\mathbb{Q}}
\newcommand{\G}{\mathbb{G}}
\newcommand{\X}{\mathcal{X}}
\newcommand{\f}{\mathcal{F}}
\newcommand{\kay}{\mathcal{K}}
\newcommand{\el}{\mathcal{L}}
\newcommand{\EA}{\mathcal{A}}
\newcommand{\Z}{\mathbb{Z}}
\newcommand{\Zp}{\mathbb{Z}_{p}}
\newcommand{\aQp}{\overline{\mathbb{Q}_{p}}}
\newcommand{\Qp}{\mathbb{Q}_{p}}
\newcommand{\Oc}{\mathcal O}
\newcommand{\GL}{\mathrm{GL}}
\newcommand{\limproj}{\underleftarrow{\lim}}
\newtheorem{theorem}{Theorem}[section]
\newtheorem{lemma}[theorem]{Lemma}
\newtheorem{prop}[theorem]{Proposition}
\newtheorem{cor}[theorem]{Corollary}
\newtheorem{conj}[theorem]{Conjecture}
\newtheorem{definition}[theorem]{Definition}
\begin{document}

\title[Iwasawa modules]
{Ramification in  Iwasawa modules}

\author[Chandrashekhar Khare]{Chandrashekhar Khare}
\email{shekhar84112@gmail.com}
\address{Department of Mathematics, UCLA \\
        Los Angeles, CA 90095-1555, U.S.A.}\thanks{CK was partially supported by NSF grants.}

\author[J-P. Wintenberger]{Jean-Pierre Wintenberger}
\email{wintenb@math.u-strasbg.fr}
\address{Universit\'e de Strasbourg \\
D\'epartement de Math\'ematique \\
Membre de l'Institut Universitaire de France\\
7, rue Ren\'e Descartes, 67084,
Strasbourg Cedex \\
France}\thanks{JPW is member of the Institut Universitaire de France.}



\maketitle


\begin{abstract}
We make a  reciprocity conjecture that extends Iwasawa's analogy of direct limits of class groups along the cyclotomic tower of a totally real number field $F$ to  torsion points of Jacobians of curves over finite fields.  The extension is to generalised class groups and generalised Jacobians. 
We  state  some ``splitting
conjectures'' which are  equivalent to Leopoldt's conjecture.
\end{abstract}


\section{Introduction}

For a number field $F$, with ring of integers $\Oc_F$, we may define the class group of $F$ to be ${\rm Pic}(\Oc_F)$, i.e., the isomorphism classes of invertible sheaves 
on ${\rm Spec}(\Oc_F)$.  Iwasawa deepened this formal analogy between class groups of number fields and Jacobians. He considered   ${\mathcal X}_\infty^-$, the  inverse limit under norm maps of the minus parts under complex conjugation of the Sylow $p$-sugroups of  the class groups of  $F(\mu_{p^n})$, where $F$ is a totally real number field, $p$ a fixed (odd) prime, and  $n$ varying. Iwasawa viewed
${\mathcal X}_\infty^- \otimes \Q_p$ as a $p$-adic vector space, which he proved to be finite dimensional, equipped  with the action of $\gamma$, a generator for the $p$-part of ${\rm Gal}(F(\mu_{p^\infty})/F)$. He conjectured that the characteristic polynomial  for this action should be the same as a certain $p$-adic $L$-function, at least when $F=\Q$. This was later called the main conjecture of Iwasawa theory which was proved by Mazur-Wiles (for $F=\Q$) and Wiles (for general totally real $F$). Iwasawa's conjecture can be viewed as an analog of the theorem of Weil which relates zeta-functions 
of curves over finite fields of characteristic $p$, to the characteristic polynomial for the action of Frobenius on the $\ell$-adic Tate module of its Jacobian, for $\ell \neq p$.

In this paper we ask for  an Iwasawa theoretic analog of a standard fact in the theory of generalised Jacobians, that holds over arbitrary base fields and is easier  than Weil's result mentioned above.  Namely,  let $X$ be a smooth projective curve over  a field $K$ with Jacobian $J$.  We have  the isomorphism ${\rm Ext}^1(J,\G_m)={\rm Pic}^0(J)=J$.  Let  $P,Q \in X(K)$ be an ordered pair of distinct points, and   consider the {\it  generalised Jacobian} $J_{P,Q}$,  the Jacobian of the singular curve $X'$  obtained from $X$  by identifying $P$ with $Q$. Thus $X'$ is a curve over $K$  with nodal singularity.  We have an exact sequence $$0 \rightarrow \G_m \rightarrow J_{P,Q} \rightarrow J \rightarrow 0.$$ The standard fact alluded to earlier is that the  class of $J_{P,Q}$ in ${\rm Ext}^1(J,\G_m)$ is given by the class of the degree 0 divisor  $(P)-(Q)$.  
We make a  reciprocity conjecture, see Conjecture \ref{rec}, that asks for an analogous formula in Iwasawa theory. To formulate this conjecture, we consider ramification at {\it auxiliary primes} in Iwasawa modules (see \S \ref{odd}),  define analogs of degree
0 divisors supported on Frobenius elements in certain Galois groups (see \S \ref{degree}), and use a well-known pairing of Iwasawa (see \S \ref{Iwasawa}). We prove an implication of  the reciprocity conjecture  (see Theorem \ref{intersection} and Corollary   \ref{evidence}). The proof of the reciprocity conjecture has eluded us. 

If  the field $K$ above is a finite field, then the extension class $(P)-(Q)$ is  of finite order. Inspired by Iwasawa's analogy, we conjecture in our situation too that the extension classes in the reciprocity conjecture  are of finite order. This leads to a splitting conjecture, see  Conjecture \ref{rational1}, that we show in Corollary \ref{implication1} to be  equivalent to the following standard  conjecture:

\begin{conj} \label{Leopoldt}(Leopoldt) 
The cyclotomic $\Z_p$-extension  $F_\infty/F$  is the unique $\Z_p$-extension of  a totally real number field $F$.
\end{conj}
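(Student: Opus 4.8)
Since this is a famous open conjecture, what follows is a plan: the route the classical theory suggests, and the point at which it stalls.

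\emph{Step 1 (translation).} By class field theory the compositum $\widetilde F$ of all $\Z_p$-extensions of a totally real field $F$ of degree $d$ has $\mathrm{Gal}(\widetilde F/F)$ of $\Z_p$-rank $1+\delta_p(F)$, where the Leopoldt defect $\delta_p(F)\ge 0$ is $(d-1)-\mathrm{rank}_{\Z_p}\overline{\Oc_F^\times}$, the bar denoting the closure of the diagonal image of the units in $\prod_{v\mid p}\Oc_{F_v}^\times$. So the conjecture is equivalent to $\delta_p(F)=0$: the map $\Oc_F^\times\otimes\Z_p\to\prod_{v\mid p}\Oc_{F_v}^\times$ is injective, equivalently the $p$-adic regulator $R_p(F)$ is non-zero, equivalently the $p$-adic logarithms $\log_p\sigma_i(\varepsilon_j)$ of a fundamental system of units $\varepsilon_1,\dots,\varepsilon_{d-1}$ under the embeddings $\sigma_i\colon F\hookrightarrow\aQp$ form a non-singular $(d-1)\times(d-1)$ array.

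\emph{Step 2 (the known case, and the shape of the argument).} If $F$ is abelian over $\Q$, decompose $\Oc_F^\times\otimes\Q_p$ and the local logarithms by the characters of $\mathrm{Gal}(F/\Q)$; for each nontrivial even character the relevant quantity is, up to elementary factors, a $p$-adic logarithm of a cyclotomic unit, and distinct cyclotomic units are multiplicatively independent algebraic numbers. The required non-vanishing is then the $p$-adic analogue of Baker's theorem on linear forms in logarithms (Brumer, building on Ax): $p$-adic logarithms of multiplicatively independent algebraic numbers are $\aQ$-linearly independent. This gives $\delta_p(F)=0$ for $F$ abelian over $\Q$ (and similarly for $F$ abelian over an imaginary quadratic field). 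The general case asks for a transcendence statement of exactly this type, but now for logarithms of a fundamental system of units of an arbitrary totally real $F$.

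\emph{Step 3 (the obstacle).} For general $F$ there is no explicit fundamental system of units and, decisively, no available transcendence theorem forcing the $\Q_p$-linear independence of the $d-1$ algebraic numbers $\log_p\sigma_i(\varepsilon_j)$: it would follow from a $p$-adic Schanuel-type conjecture (or from Roy's conjectural generalisation of the six-exponentials theorem), but these are open and nothing weaker suffices. This is the crux, and it is why the conjecture remains unproved in general. Within the present paper there is an equivalent target: by Corollary \ref{implication1} the splitting Conjecture \ref{rational1} --- that the extension classes attached to the divisors of \S\ref{degree} by the reciprocity construction of \S\ref{odd}--\S\ref{Iwasawa} are of finite order, the Iwasawa-theoretic analogue of the fact that over a finite field the class of $(P)-(Q)$ in $\mathrm{Ext}^1(J,\Gm)$ has finite order --- is equivalent to Leopoldt, so one could instead try to prove that finiteness directly. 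The ``weak'' statement, that the relevant Iwasawa module has the expected rank, is classical (Iwasawa), so what is missing is precisely the vanishing of the associated defect; I expect either route to require a genuinely new idea.
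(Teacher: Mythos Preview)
Your assessment is correct: the statement is a conjecture, not a theorem, and the paper does not prove it. The paper treats Leopoldt's conjecture as open throughout, exactly as you do, and your Steps 1--3 are an accurate summary of the classical picture (the class-field-theoretic translation to $\delta_{F,p}=0$, Brumer's theorem for $F$ abelian over $\Q$, and the transcendence obstruction in general).

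Where the paper goes beyond this standard account is in two places you might mention more explicitly. First, it proves several \emph{equivalences}: Proposition \ref{equivalent} (finiteness of $H^1(\Gamma,Y_\infty)$), Corollary \ref{implication1} and Proposition \ref{splittingatp} (the splitting conjectures), and the Iwasawa-type criterion in \S\ref{even}. You cite Corollary \ref{implication1} in Step 3, which is the right pointer. Second, the Appendix proves an unconditional partial result: for $F/\Q$ totally real Galois, $\delta_{F,p}\neq 1$. This uses Brumer's theorem for the abelian subfield cut out by the character governing the putative extra $\Z_p$-extension, together with a group-theoretic lemma on extensions $1\to L\to G\to H\to 1$ with $L\simeq\Z_p^d$ and trivial $H$-action. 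That argument is worth knowing, since it is the only place in the paper where something is actually \emph{proved} in the direction of the conjecture rather than merely reformulated.
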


We denote by $\delta_{F,p}$, the integer such that the $\Z_p$-rank of the maximal abelian $p$-extension of $F$ unramified outside $p$ is $1+\delta_{F,p}$. The conjecture asserts that it is 0; $\delta_{F,p}$ is also called the Leopoldt defect (for $F$ and $p$).

Our original motivation for this work was to search for  a criterion for Leopoldt's conjecture that could be approached using Wiles' proof of the main conjecture \cite{Wiles} which draws on Hida's  theory of  $\Lambda$-adic Hilbert modular  forms.  This search led to Conjecture \ref{rational1}. As Conjecture \ref{rational1} is about odd extensions of $\f_\infty$, it  might offer some access to methods that use Hilbert modular forms.

\subsection{\it Notation}\label{notation} 
 We fix a prime number $p$ throughout. Except in paragraph \ref{kummertheory}, we make the assumption that $p$ is odd. 
We let $F$ be a totally real number field. We operate within a fixed algebraic closure $\overline F$ of $F$.   We have the cyclotomic $\Gamma(=\Z_p)$-extension of $F$  that we denote by $F_\infty$.  We denote by $\gamma$ a chosen topological generator of $\Gamma$, and by $\chi$ the $p$-adic cyclotomic character.  The field $F_\infty$  is contained in ${\f}_\infty=F(\mu_{p^\infty})$, whose  real subfield we denote by  $F^\infty$; $F_\infty$ is contained in $F^\infty$. The degree $[{\f}_\infty:F_\infty]$ divides $p-1$ and $[\f_\infty:F^\infty]=2$.   We denote by $\f_n$ and $F_n$ the extension $F(\mu_{p^{n+t}})$ and its real subfield respectively. Here $t$ is the largest integer so  that $F(\mu_p)$ contains the $\mu_{p^t}$ roots of unity. Hence $[\f_n : F(\mu_p )]= [F_n :F]=p^n$.
For convenience we will  assume throughout the paper that $F_\infty=F^\infty$, i.e., $[F(\mu_p):F]=2$.  
For a finite place $q$ of a number field $F$ we denote by $N(q)$ its norm,  the order of the residue field at $q$.  For a finite set of finite places $Q$ of $F$, by the $Q$-units of $F$,  denoted by $E_Q$, we mean elements of $F^*$ which are units at  all finite places outside $Q$. 

For an abelian group $M$, we denote by $\widehat M$ its  prop-$p$ completion $\limproj_n M/M^{p^n}$.

We say that an  abelian extension $L$ of $\f_\infty$ is odd (or its Galois group is odd) if
$L$ is Galois over $F$ and the complex conjugation of ${\rm Gal}(\f_\infty/F)$ acts on ${\rm Gal}(L/\f_\infty)$ by  inversion.


 By the $\Z_p$-rank of an $\Z_p$-module $M$,  called the essential rank by Iwasawa, we mean the dimension of $M \otimes \Q_p$ as a vector space over $\Q_p$. For a $\Lambda=\Z_p[[T]]=\Z_p[[\Gamma]]$-module $M$, and an integer $n$, we denote by $M(n)$ the $\Lambda$-module with same underlying module $M$, and the $\Lambda$-action specified by $\gamma.m=\chi(\gamma)^n \gamma m$. We say that (possibly infinite) Galois extensions $L,L'$ of a field $K$ are almost linearly disjoint if the degree $[L \cap L':K]$ is finite.  Given a Galois  extension  $L/K$ of algebraic (possibly infinite)  extensions of $\Q$, we may talk about places of $K$ and  conjugacy class of decomposition groups, inertia groups at these places. If $L/K$ has abelian Galois group  we say that $L/K$ is almost totally ramified at a set of places of $K$ if the  inertia groups at these places generate a subgroup of finite index of ${\rm Gal}(L/K)$.
 

 \subsection{\it Acknowledgements}
 
 We would like to thank Gebhard B\"ockle,   John Coates,  Najmuddin Fakhruddin,   David Gieseker, Ralph Greenberg, Benedict Gross, 
 Haruzo Hida,  Tony Scholl, Chris Skinner, Kevin Ventullo for helpful conversations.  
  The first author thanks the D\'epartement de Mathematiques of the Universit\'e de Strasbourg
  for its support during a visit in the summer of 2009 when  some of the work reported on in this paper was done.
  
  Part of the writing of  this work was done during the authors' stay at the Institut Henri Poincare - Centre Emile Borel and IAS, Princeton. The authors thank these institutions for hospitality and support. 

  \section{Some Kummer theory}\label{kummertheory}
 
 In this section, we state some results on Kummer theory 
 and  $\Zp$-extensions  which presumably are well known.  They are basic to the work 
 of this paper. For lack of a reference known to us, we provide proofs of these results.

Let $p$ be any prime number for this section, allowing $p=2$.

\subsection{General fields}\label{kummergen} 

Let  $F$ be any field
of characteristic different from  $p$. Recall that $\f_\infty $ is the 
cyclotomic extension $F(\mu_{p^\infty})$.
Let $L$ be an extension of $\mathcal{F}_\infty$. We say that $L$ is \it a Kummer 
$\Zp$-extension \rm of $F$  if $L/F$ is Galois and it is such that ${\rm Gal}(L/\f_\infty)\simeq \Z_p$ is isomorphic to $\Z_p(1)$ as a ${\rm Gal}(\f_\infty/F)$-module.
 We let $\widehat{F^*} $ be the $p$-adic completion 
 of the multiplicative groupe of $F$ \it i.e. \rm the projective limit $\limproj_n  F^*/(F^*)^{p^n}$, the 
 transition maps being induced by the identity. 
 
 We have the Kummer isomorphisms  $\mathrm{K}_{F,n}: F^* /(F^*)^{p^n}  \rightarrow 
H^1 (G_F , \mu _{p^n})$. Taking  the projective limits for $n$, 
we get an isomorphism   $\mathrm{K}_F : \widehat{F^*}\rightarrow H^1 (G_F ,\Zp (1))$, where 
the $H^1$ are continuous $H^1$, the topology 
of $\Zp (1)$ being the $p$-adic one (\cite{NSW}).

If $\bar{x} =(\bar{x}_n )_{n\in \N}$ is an element of $\widehat{F^*}$, we note 
$F_{\bar{x}}$ the extension 
of $\mathcal{F}_\infty$ which is the union of the Kummer extensions
$F(\mu _{p^n}, x_n ^{1/p^n})$, where $x_n \in F^*$ maps
to $\bar{x}_n $ in $F^*/(F^*)^{p^n}$. It is also the extension
of $\f _\infty$ corresponding to the fixed field of the kernel 
of the homomorphism arising from the image of $\mathrm{K}_F (\bar{x}) $  under  the map 
$H^1(G_F , \Zp (1))\rightarrow \mathrm{Hom}(G_{\mathcal{F}_\infty}, \Zp )(1)^0,$
where the $\mathrm{Hom}$
are continuous homomorphisms and  $^0$ means fixed by $\mathrm{Gal}(\mathcal{F}_\infty / F)$.   
 
 For a subgroup $T$ of $\widehat{F^*}$, by $F(\mu_{p^{\infty}},T^{1 \over p^\infty})$ we mean
 the compositum of all extensions of $F$  obtained by adjoining,  for all $n \in \N$,  all $p^n$ th roots of (lifts to $F^*$ of) the image of $T$ in $F^*/{(F^*)}^{p^n}$ : it is the union of 
 the fields $F_{\bar{x}}$ for $x\in T$. If $T$ is a subgroup of $F^*$ we still 
 denote $F(\mu_{p^{\infty}},T^{1 \over p^\infty})$ the extension defined by the image of 
 $T$ in $\widehat{F^*}$.

 \begin{prop}\label{kummerprop}
 The Kummer $\Zp$-extensions of $\f _\infty$ are exactly the fields
 $F_{\bar{x}}$, for $\bar{x}\in \widehat{F^*}$ non-torsion. The torsion 
 of  $\widehat{F^*}$  is the group $\mu_{p^\infty }(F)$  of roots of unity of order a power of $p$
 if this group is finite,  and is trivial if $F=\f _\infty$.
 \end{prop}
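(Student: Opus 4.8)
The plan is to reduce everything to the Kummer isomorphism $\mathrm{K}_F\colon\widehat{F^{*}}\xrightarrow{\ \sim\ }H^1(G_F,\Zp(1))$ recalled above, together with group cohomology of $\Delta:=\mathrm{Gal}(\f_\infty/F)$, which is a closed subgroup of $\Zp^{\times}$ acting on $\Zp(1)$ through the inclusion.

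First I would compute the torsion of $\widehat{F^{*}}$, equivalently of $H^1(G_F,\Zp(1))$. Taking $G_F$-cohomology of $0\to\Zp(1)\xrightarrow{\,p^n\,}\Zp(1)\to\mu_{p^n}\to 0$ identifies $H^1(G_F,\Zp(1))[p^n]$ with the cokernel of the reduction map $H^0(G_F,\Zp(1))\to H^0(G_F,\mu_{p^n})$, i.e. of $T_p\bigl(\mu_{p^\infty}(F)\bigr)\to\mu_{p^n}(F)$. If $\mu_{p^\infty}(F)$ is finite this Tate module vanishes, so $H^1(G_F,\Zp(1))[p^n]=\mu_{p^n}(F)$, and letting $n\to\infty$ the torsion subgroup is $\mu_{p^\infty}(F)$. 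If $\mu_{p^\infty}\subseteq F$ (in particular if $F=\f_\infty$), the reduction $\Zp(1)\to\mu_{p^n}$ is onto, the cokernel is $0$ for every $n$, and $\widehat{F^{*}}$ is torsion-free. Applying this with $F$ replaced by $\f_\infty$ records that $\widehat{\f_\infty^{\,*}}=H^1(G_{\f_\infty},\Zp(1))$ is torsion-free, which I will use below.

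Next, since $\f_\infty\supseteq\mu_{p^\infty}$ the group $G_{\f_\infty}$ acts trivially on $\Zp(1)$, and the inflation--restriction sequence becomes
$$0\to H^1(\Delta,\Zp(1))\xrightarrow{\ \mathrm{inf}\ }H^1(G_F,\Zp(1))\xrightarrow{\ \mathrm{res}\ }H^1(G_{\f_\infty},\Zp(1))^{\Delta}\xrightarrow{\ d_2\ }H^2(\Delta,\Zp(1)),$$
whose third term is exactly $\mathrm{Hom}(G_{\f_\infty},\Zp)(1)^{0}$. For $p$ odd, $\Delta$ has no $p$-torsion, so $\mathrm{cd}_p(\Delta)\le 1$ and hence $H^2(\Delta,\Zp(1))=\limproj_nH^2(\Delta,\mu_{p^n})=0$; moreover $H^1(\Delta,\Qp(1))=0$ (if $\Delta$ is infinite its procyclic pro-$p$ part acts on $\Qp(1)$ through a non-trivial character), so $H^1(\Delta,\Zp(1))$ is a finitely generated torsion $\Zp$-module, hence finite. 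For $p=2$ these two facts are checked directly: $\widehat H^{\bullet}(\Z/2,\Z_2(1))$ vanishes in even degrees because $\Z_2(1)^{\pm1}=0$, and $H^1$ of the procyclic part with $\Z_2(1)$-coefficients is finite when that part acts non-trivially, so Hochschild--Serre again gives $H^2(\Delta,\Z_2(1))=0$ and $H^1(\Delta,\Z_2(1))$ finite. Consequently $\mathrm{res}$ is surjective onto $\mathrm{Hom}(G_{\f_\infty},\Zp)(1)^{0}$, while $\ker(\mathrm{res})\cong H^1(\Delta,\Zp(1))$ is finite, hence torsion; as $H^1(G_{\f_\infty},\Zp(1))$ is torsion-free, the torsion subgroup of $\widehat{F^{*}}$ maps to $0$ under $\mathrm{res}$, and therefore $\mathrm{K}_F^{-1}(\ker\mathrm{res})$ is precisely the torsion subgroup of $\widehat{F^{*}}$.

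Finally I would read off the first assertion. By the description recalled above, $F_{\bar x}$ is the fixed field of the kernel of $\phi_{\bar x}:=\mathrm{res}(\mathrm{K}_F(\bar x))\in\mathrm{Hom}(G_{\f_\infty},\Zp)(1)^{0}$; this homomorphism is $\Delta$-equivariant, so its kernel is normal in $G_F$, $F_{\bar x}/F$ is Galois, and $\mathrm{Gal}(F_{\bar x}/\f_\infty)=\mathrm{im}(\phi_{\bar x})$ is a $\Delta$-submodule of $\Zp(1)$. A non-zero submodule of $\Zp(1)$ is $p^{k}\Zp(1)\cong\Zp(1)$, so $F_{\bar x}$ is a Kummer $\Zp$-extension of $\f_\infty$ exactly when $\phi_{\bar x}\ne0$, i.e. exactly when $\bar x\notin\mathrm{K}_F^{-1}(\ker\mathrm{res})$, which by the previous paragraph means $\bar x$ is non-torsion. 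Conversely, given a Kummer $\Zp$-extension $L/\f_\infty$, the projection $\psi\colon G_{\f_\infty}\twoheadrightarrow\mathrm{Gal}(L/\f_\infty)\cong\Zp(1)$ is $\Delta$-equivariant because $L/F$ is Galois, hence lies in $\mathrm{Hom}(G_{\f_\infty},\Zp)(1)^{0}=\mathrm{im}(\mathrm{res})$; picking $c\in H^1(G_F,\Zp(1))$ with $\mathrm{res}(c)=\psi$ and setting $\bar x=\mathrm{K}_F^{-1}(c)$ yields $F_{\bar x}=L$ with $\bar x$ non-torsion since $\phi_{\bar x}=\psi\ne0$. The one genuinely delicate input is the vanishing of $H^2(\Delta,\Zp(1))$ (and finiteness of $H^1$) when $p=2$, where $\Delta$ may contain $2$-torsion and the cohomological-dimension shortcut fails; this forces the hands-on Tate-cohomology computation indicated above. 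Everything else is a formal consequence of the Kummer isomorphism and inflation--restriction.
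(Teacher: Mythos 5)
Your overall strategy is the same as the paper's: Kummer isomorphism plus inflation--restriction for $\Delta=\mathrm{Gal}(\f_\infty/F)$, with the crux being control of $H^1(\Delta,\Zp(1))$ and $H^2(\Delta,\Zp(1))$. Your computation of the torsion of $\widehat{F^*}$ via the long exact sequence attached to $0\to\Zp(1)\xrightarrow{p^n}\Zp(1)\to\mu_{p^n}\to0$ is a tidy alternative to the paper's direct elementary manipulation with compatible systems $(\bar x_n)$, and for odd $p$ your $\mathrm{cd}_p(\Delta)\le 1$ argument is correct and slightly more streamlined than the paper's reduction to $F(\mu_p)$ followed by an explicit procyclic computation.

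However, there is a genuine error at $p=2$: the assertion $H^2(\Delta,\Z_2(1))=0$ is false in general. Take $F=\Q$, $p=2$, so $\Delta=\Z_2^\times\cong\{\pm1\}\times(1+4\Z_2)$. In Hochschild--Serre for $1\to(1+4\Z_2)\to\Delta\to\{\pm1\}\to1$ one has $H^1(1+4\Z_2,\Z_2(1))=\Z_2/(5-1)\Z_2\cong\Z/4$, on which $\{\pm1\}$ acts by $-1$, so $E_2^{1,1}=H^1(\Z/2,\Z/4)\cong\Z/2$; since $E_2^{2,0}=E_2^{0,2}=0$ and no differentials touch $E_2^{1,1}$, one gets $H^2(\Z_2^\times,\Z_2(1))\cong\Z/2\ne0$. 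Your ``hands-on Tate-cohomology computation'' for $\Z/2$ is fine as far as it goes (even-degree Tate cohomology of $\Z/2$ with coefficients $\Z_2(1)$ does vanish), but it does not propagate through Hochschild--Serre to kill $H^2(\Delta,\Z_2(1))$ because of exactly this $E_2^{1,1}$ contribution. This breaks your claimed surjectivity of $\mathrm{res}$ onto $\mathrm{Hom}(G_{\f_\infty},\Zp)(1)^{0}$ when $p=2$.

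The fix is what the paper actually proves and uses: one only needs $H^1(\Delta,\mu_{p^n})$ and $H^2(\Delta,\mu_{p^n})$ to be killed by a power $p^a$ independent of $n$. Then $\mathrm{res}$ has image of finite index, which still suffices: given a Kummer $\Zp$-extension $L/\f_\infty$ with associated $\psi\in\mathrm{Hom}(G_{\f_\infty},\Zp)(1)^{0}$, some $p^{a}\psi$ (in the paper, $p^{2a}\psi$ after also correcting by the $H^1$-bound) lies in the image of $\mathrm{res}$, and since $F_{\bar x}$ depends only on the $\Zp$-line through $\mathrm{K}_F(\bar x)$, this still produces $\bar x$ non-torsion with $F_{\bar x}=L$. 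With that one correction your argument is sound and parallels the paper's.
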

 
 \begin{proof}
 Let $\bar{x}\in \widehat{F^*}$ be such that $ \bar{x}^{p^a}=1$. 
 Write $\bar{x}= (\bar{x}_n)_n$ with $x_n \in F^*$. For every $n$, there exists
 $y_n \in F^*$ such that $x_n ^{p^a}= y_n^{p^n}$. For $n\geq a$, it follows that 
 $\epsilon _{n-a} := x_n y_n^{-p^{n-a}}$ is a $p^a$ root of unity. 
 We have $(\bar{x}_n )=(\bar{\epsilon }_n)$. If  $\mu_{p^\infty }(F)$ is finite, 
 it follows that there 
 exists an $\epsilon \in \mu_{p^\infty }(F)$  such that 
the $\bar{\epsilon} _n$ for $n\in \N$ are the image of $\epsilon$.
If $\mu_{p^\infty }(F)$ is infinite, it is $p$-divisible, and it follows that the torsion 
of $\widehat{F^*}$ is trivial. This proves the part of the proposition
concerning the torsion of $\widehat{F^*}$.
 
 If $\mu_{p^\infty }(F)$ is infinite, the proposition follows from
 the fact that the Kummer map $\mathrm{K}_F$ is bijective.
  Let us suppose that $\mu_{p^\infty }(F)$ is finite.
 
 \begin{lemma} The cohomology groups 
 $H^1 (\mathrm{Gal}(\f _\infty /F), \mu_{p^n} (\overline{F}))$
 and $H^2 (\mathrm{Gal}(\f _\infty /F), \mu_{p^n} (\overline{F}))$
 are killed by 
 a power $p^a$ of $p$ independent of $n$. \end{lemma} 

Let us prove the proposition granted the lemma. As the projective system
$\Zp (1) = \limproj_n \mu _{p^n} (\overline{F} )$ satisfies the Mittag-Leffler property,
and the functor projective limit is left exact, Hochschild-Serre exact sequences 
for coefficients $\mu_{p^n} (\overline{F} )$
give the following exact sequence:
 $$(0) \rightarrow H^1(\mathrm{Gal}(\f_\infty / F), \Zp (1))\rightarrow
H^1 (G_F, \Zp (1))\rightarrow H^1 (G_{\f_\infty}, \Zp (1)),$$
and the $H^1$ with coefficients in $\Zp (1)$ are the projective limit 
of the $H^1$ with coefficients in $\mu_{p^n} (\overline{F})$
(use cor. 2.7.6. of chap. 2 paragraph 7 of \cite{NSW}).
The lemma implies that  $H^1(\mathrm{Gal}(\f_\infty / F), \Zp (1))$
is torsion.
It then follows from the above  exact sequence, the fact that 
 $H^1 (G_{\f_\infty}, \Zp (1))= \mathrm{Hom}(G_{\f_\infty} , \Zp (1))$ has no torsion, and the bijectivity of 
the Kummer map $\mathrm{K}_F$, that the kernel 
of the map 
$\widehat{F^*} \rightarrow \mathrm{Hom}(G_{\f_\infty} , \Zp (1))$ is the torsion subgroup
of $\widehat{F^*}$. It follows that if $\bar{x}$ is not torsion, the extension 
$F_{\bar{x}}$ is a $\Zp$ Kummer extension of $\f _\infty$.

Conversely, let $L$ be a Kummer $\Zp$-extension of $\f_\infty$. Let $f$ be a continuous 
non zero morphism $G_{\f _\infty } \rightarrow \Z_p (1)$ whose kernel corresponds to $L$. 
Let $f_n $ be the morphisms $G_{\f _\infty } \rightarrow \mu _{p ^n} (\overline{F})$ 
defined by $f$. As  $H^2 (\mathrm{Gal}(\f _\infty /F), \mu_{p^n} (\overline{F}))$ is killed
by $p^a$, $p^a f_n$ is the image of an element $\bar{x}_n$ of 
$F^* /(F^*)^{p^n}$. As  $H^1(\mathrm{Gal}(\f _\infty /F), \mu_{p^n} (\overline{F}))$ is killed
by $p^a$, the $\bar{x}_n ^{p^a}$ define an element  $\bar{x}'$ in the projective limit
$\limproj_n F^*/(F^*)^{p^n}$, hence of $\widehat{F^*}$. One has 
$\mathrm{K}_F (\bar{x}')=p^{2a} f$, hence $L= F_{\bar{x}'}$. This  proves the proposition, granted the lemma.

Let us prove the lemma. Let $F'$ be $F(\mu_p (\overline{F}))$ if $p\not= 2$ and $F(\mu_4(\overline{F}))$ if 
$p=2$. 
By Hochschild-Serre spectral sequence , we reduce to the case $F= F'$.
Note that if  $\mu_{p^\infty} (F)$
is infinite, the lemma is obvious as $\f _\infty = F$. 
So we may suppose that $\mathrm{Gal} (\f _\infty /F)$ is isomorphic to $\Zp$. 
Let $\gamma$ a generator of $\mathrm{Gal} (\f _\infty /F)$ and 
$\chi_p (\gamma)$ its image by the cyclotomic character.
The calculation of the cohomology
of the procyclic group $\Zp$ gives that  $H^1 (\mathrm{Gal}(\f _\infty /F), \mu_{p^n} (\overline{F}))$ 
is isomorphic to $(\Z /p^n \Z)/ (\chi_p (\sigma)-1)$ and  
$H^2 (\mathrm{Gal}(\f _\infty /F), \mu_{p^n} (\overline{F}))$ is trivial (prop. 1.7.7
of chap. 1 paragraph 7 of \cite{NSW}). The lemma follows as 
$\chi _p (\gamma)\not= 1$. 
\end{proof}

\it Remarks. \rm It follows from the proof of the proposition that $\widehat{F^*}$ injects in 
 $H^1 (G_{\f_\infty}, \Zp (1))$. It implies the following.
Let $\bar{x}_i$, $i=1,2$, be two non-torsion elments of 
$\widehat{F^*}$. Then $F_{\bar{x}_1}= F_{\bar{x}_2}$ if and only 
if there exist $a_1$ and $a_2$ in $\Zp$, non-zero, such that 
$\bar{x}_1^{a_1}= \bar{x}_2 ^{a_2}$.   

The proof of the proposition implies that if $T$ is a finitely 
generated subgroup of $F^*$, the Galois group of $F_T=F(\mu_{p^{\infty}},T^{1 \over p^\infty})$
over $\mathcal{F}_\infty = F(\mu_{p^{\infty}})$ is a finitely generated 
abelian group of  the same $\Zp$-rank as the closure of $T$ in $\widehat{F^*}$.

\subsection{Number fields}\label{kummernf}

We suppose now that $F$ 
is a finite extension of $\Q$. If $\mathfrak{q}$ is a prime of $F$,
we denote by $F_{\mathfrak{q}}$ the completion of $F$ at $\mathfrak{q}$. We denote by 
$v_{\mathfrak{q}}$ the 
valuation of $F_{\mathfrak{q}}$ normalized by $v_{\mathfrak{q}} (F_{\mathfrak{q}} ^* )= \Z$. We still denote by $v_{\mathfrak{q}}$ the map $\widehat{F_{\mathfrak{q}} ^*}\rightarrow \Zp$ 
induced by $v_{\mathfrak{q}}$. We denote by $\mathrm{loc}_{\mathfrak{q}}$
the morphism $\widehat {F^*} \rightarrow \widehat{F_{\mathfrak{q}}^*}$ induced 
by the inclusion of $F$ in $F_{\mathfrak{q}}$. 
\begin{prop}\label{ramification} Let $\bar{x}\in \widehat{F^*}$ be non-torsion. 
Then, the Kummer extension $F_{\bar{x}}/ \f _\infty$ is unramified at primes 
above $\mathfrak{q}$ if and only if $\mathrm{loc}_{\mathfrak{q}}(\bar{x})$ is torsion.
\end{prop}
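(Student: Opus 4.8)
The plan is to work place-by-place and reduce the ramification of $F_{\bar x}/\f_\infty$ at primes above $\mathfrak q$ to a purely local Kummer-theoretic statement over the completion. First I would fix a prime $\mathfrak Q$ of $\f_\infty$ above $\mathfrak q$ and let $(\f_\infty)_{\mathfrak Q}$ denote the completion; restricting the global Kummer class to the decomposition group at $\mathfrak Q$ corresponds, under the Kummer isomorphism $\mathrm K_F$ and its local analogue, to applying $\mathrm{loc}_{\mathfrak q}$ and then further restricting from $G_{F_{\mathfrak q}}$ to the decomposition group of $\f_\infty$ at $\mathfrak Q$. The extension $F_{\bar x}/\f_\infty$ is unramified at all primes above $\mathfrak q$ precisely when the homomorphism $G_{(\f_\infty)_{\mathfrak Q}}\to \Z_p(1)$ obtained from $\mathrm{loc}_{\mathfrak q}(\bar x)$ kills the inertia subgroup, i.e.\ factors through the unramified quotient, for one (equivalently every, by Galois conjugacy) such $\mathfrak Q$.

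Next I would invoke the $p$-adic completed local Kummer theory exactly as in Proposition \ref{kummerprop}, applied to the local field $F_{\mathfrak q}$ in place of $F$: since $\mu_{p^\infty}(F_{\mathfrak q})$ is finite (the local field is finite over $\Q_\ell$ for some $\ell$, possibly $\ell=p$), the same argument shows $\widehat{F_{\mathfrak q}^*}$ injects into $H^1(G_{(\f_\infty)_{\mathfrak Q}},\Z_p(1)) = \mathrm{Hom}(G_{(\f_\infty)_{\mathfrak Q}},\Z_p(1))$, and a non-torsion element of $\widehat{F_{\mathfrak q}^*}$ gives a Kummer $\Z_p$-extension of $(\f_\infty)_{\mathfrak Q}$. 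So the question becomes: when does a class in $\widehat{F_{\mathfrak q}^*}$ produce an unramified extension of $(\f_\infty)_{\mathfrak Q}$? The answer should be: exactly when the class is ``unramified'', i.e.\ lies in the image of $\widehat{\Oc_{F_{\mathfrak q}}^*}$, equivalently has $v_{\mathfrak q}$-image zero — which is the condition that $\mathrm{loc}_{\mathfrak q}(\bar x)$ be torsion in $\widehat{F_{\mathfrak q}^*}$, since by the local version of Proposition \ref{kummerprop} the torsion of $\widehat{F_{\mathfrak q}^*}$ is $\mu_{p^\infty}(F_{\mathfrak q})$, and $\widehat{F_{\mathfrak q}^*}$ modulo this torsion is detected faithfully inside $\widehat{F_{\mathfrak q}^*}$ whose only non-unit part is measured by $v_{\mathfrak q}$. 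More precisely: $\mathrm{loc}_{\mathfrak q}(\bar x)$ is torsion iff $v_{\mathfrak q}(\mathrm{loc}_{\mathfrak q}(\bar x)) = 0$ and the unit part is torsion, and one checks the unit part being non-torsion still gives a ramified (indeed a genuinely new $\Z_p$) extension only through wild ramification at $p$ — so I must be a little careful in the case $\mathfrak q \mid p$.

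For the direction ``$\mathrm{loc}_{\mathfrak q}(\bar x)$ torsion $\Rightarrow$ unramified'': if the local class is torsion it becomes trivial in $\mathrm{Hom}(G_{(\f_\infty)_{\mathfrak Q}},\Z_p(1))$ by the local injectivity statement, so the associated extension of $(\f_\infty)_{\mathfrak Q}$ is trivial, a fortiori unramified. For the converse: suppose $\mathrm{loc}_{\mathfrak q}(\bar x)$ is non-torsion; I want to show $F_{\bar x}/\f_\infty$ is ramified at $\mathfrak Q$. If $v_{\mathfrak q}(\mathrm{loc}_{\mathfrak q}(\bar x)) \neq 0$ this is the classical fact that adjoining $p^n$-th roots of a uniformizer ramifies; the $p$-adic-completion and passage up the cyclotomic tower are handled by the injectivity of the local Kummer map together with the observation that the inertia subgroup of $G_{(\f_\infty)_{\mathfrak Q}}$ pairs nontrivially with any class of nonzero valuation. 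If $v_{\mathfrak q}(\mathrm{loc}_{\mathfrak q}(\bar x)) = 0$ but the class is still non-torsion, then $\mathfrak q$ must lie over $p$ (for $\mathfrak q \nmid p$ the completed unit group $\widehat{\Oc_{F_{\mathfrak q}}^*}$ is finite, hence its elements are torsion, so this case is vacuous), and here the extension is wildly ramified at $\mathfrak q$; to see the ramification I would use local class field theory / the structure of $\widehat{F_{\mathfrak q}^*}$ over $\Q_p$, noting that a non-torsion unit gives a $\Z_p$-extension of $(\f_\infty)_{\mathfrak Q}$ that cannot be unramified because the maximal unramified $p$-extension of a $p$-adic local field is procyclic and is already accounted for, whereas a ramified behaviour is forced by comparing with the norm subgroup. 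The main obstacle is precisely this last case $\mathfrak q \mid p$ with vanishing valuation: one must argue that a non-torsion unit in $\widehat{F_{\mathfrak q}^*}$ genuinely ramifies over the cyclotomically-completed base, which is where local class field theory (identifying the inertia group with the image of $\Oc_{F_{\mathfrak q}}^*$ in the abelianized Galois group) does the work; away from $p$ everything reduces to the elementary statement about roots of uniformizers.
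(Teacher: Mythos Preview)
Your overall plan---localize at $\mathfrak q$, apply Proposition~\ref{kummerprop} to the completion $E=F_{\mathfrak q}$, and then analyse when the resulting local Kummer $\Z_p$-extension of $E(\mu_{p^\infty})$ is unramified---is exactly the right reduction, and your ``torsion $\Rightarrow$ unramified'' direction is fine. For $\mathfrak q\nmid p$ your converse is also fine: there $E(\mu_{p^\infty})\subset E_{\mathrm{ur}}$ is discretely valued, $\widehat{\Oc_E^*}$ is finite (hence torsion), and a non-torsion class must have $v_{\mathfrak q}\neq 0$, so adjoining $p$-power roots genuinely increases the value group.

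The gap is the case $\mathfrak q\mid p$. Here $E(\mu_{p^\infty})/E$ is infinitely ramified, so its value group is already $p$-divisible and the ``classical fact that adjoining $p^n$-th roots of a uniformizer ramifies'' no longer follows from a valuation argument. Your appeal to local class field theory (``the inertia group is the image of $\Oc_E^*$ in $G_E^{\mathrm{ab}}$'') describes the reciprocity map on $E$, not the Kummer extension of $E(\mu_{p^\infty})$; these are related by duality but that relation is not what you invoke. And ``the maximal unramified $p$-extension is procyclic and already accounted for'' does not finish the job either: there \emph{is} an unramified $\Z_p$-extension of $E(\mu_{p^\infty})$, and what you need (but do not say) is that it carries the \emph{trivial} action of $\mathrm{Gal}(E(\mu_{p^\infty})/E)$ rather than the $\chi_p$-action, hence cannot be Kummer.

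The paper bypasses all of this with a single uniform device: it passes to the maximal unramified extension $E_{\mathrm{ur}}$ of $E$ and observes that the natural map $\widehat{E^*}\to\widehat{E_{\mathrm{ur}}^*}$ has torsion kernel. Granting this, ``$F_{\bar x}/\f_\infty$ unramified above $\mathfrak q$'' is equivalent (by Proposition~\ref{kummerprop} applied to $E_{\mathrm{ur}}$) to the image of $\mathrm{loc}_{\mathfrak q}(\bar x)$ in $\widehat{E_{\mathrm{ur}}^*}$ being torsion, which by torsion-kernel is equivalent to $\mathrm{loc}_{\mathfrak q}(\bar x)$ itself being torsion. The torsion-kernel claim is checked by writing both sides explicitly: for $\mathfrak q\nmid p$ one has $\widehat{E^*}\simeq\omega^{\Z_p}\mu_{p^\infty}(E)$ and $\widehat{E_{\mathrm{ur}}^*}\simeq\omega^{\Z_p}$; for $\mathfrak q\mid p$ one has $\widehat{E^*}\simeq\omega^{\Z_p}U_E^+$ and $\widehat{E_{\mathrm{ur}}^*}\simeq\omega^{\Z_p}U_{\widehat{E_{\mathrm{ur}}}}^+$, and $U_E^+\hookrightarrow U_{\widehat{E_{\mathrm{ur}}}}^+$ because the principal units of $E_{\mathrm{ur}}$ are $p$-adically separated. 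This last injectivity is precisely the missing ingredient your case-by-case argument needed at $\mathfrak q\mid p$.
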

\begin{proof} Let us note $E= F_{\mathfrak{q}}$ and  $E_{\mathrm{ur}}$  the maximal
unramified extension of $E$. The proposition follows from proposition
\ref{kummerprop} and the fact that the kernel of $\widehat{E^*}
\rightarrow \widehat{ E_{\mathrm{ur}}^*}$ is torsion. For this fact, 
let $\omega$ be a uniformizer of $E$. If $\mathfrak{q}$
is not above $p$, we have 
$\widehat{E^*}\simeq \omega^{\Zp} \mu_{p^\infty}(E)$ and
$\widehat{E_{\mathrm{ur}^*}}\simeq \omega^{\Zp}$. 
If $\mathfrak{q}$ is above $p$, we have $\widehat{E^*}\simeq 
\omega^{\Zp}U^+_E$ and  $\widehat{E_{\mathrm{ur}}^*}\simeq 
\omega^{\Zp}U^+_{\widehat{E_{\mathrm{ur}}}}$, where $U^+$ are 
units that $\equiv 1 \ \mathrm{mod} . \omega$ and $\widehat{E_{\mathrm{ur}}}$
is the completion of $E_{\mathrm{ur}}$.
The map $U^+ _F \rightarrow U^+_{\widehat{E_{\mathrm{ur}}}}$,
is injective as  $U^+_{E_{\mathrm{ur}}}$ is separated 
for the $p$-adic topology. \end{proof}

\it Remark. \rm  
The proof of the proposition shows that  if $F_{\bar{x}}/F$ is unramified 
at $\mathfrak{q}$, $v_{\mathfrak{q}}(\bar{x})=0$, the converse being 
true if  $\mathfrak{q}$ is not above $p$.


\vskip .3cm

We now let $Q$ be a finite set of primes of $F$. We denote by $E_Q$ the $Q$-units 
\it i.e. \rm the  elements $x\in F^*$ such that $v_{\mathfrak{q}}(x)=0$
for $\mathfrak{q}\notin Q$. 
The group $E_Q$ is finitely generated.  We write $\widehat{E_Q}$
its $p$-adic completion. 
As if a power of $x\in F^*$
is a $Q$-unit, then $x$ is a $Q$-unit, the natural maps
$E_Q /E_Q ^{p^n}\rightarrow F^* /(F^* )^{p^n}$ are injective, hence also the map
$\widehat{E_Q}
\rightarrow \widehat{F^*}$. We  identify $\widehat{E_Q}$
to a subgroup of $\widehat{F^*}$.  

\begin{prop}\label{Q} a) An element $\bar{x}\in \widehat{F^*}$ belongs to $\widehat{E_Q}$
if and only if $v_{\mathfrak{q}} (\bar{x})=0$ for $\mathfrak{q}\notin Q$. 

b) If 
$\bar{x}$ is non-torsion, the Kummer $\Zp$-extension $F_{\bar{x}}/\f_\infty $ is unramified outside $Q$
only if $\bar{x}\in \widehat{E_Q}$. If the primes 
of $F$ above $p$ are
in $Q$, the converse is true. 
\end{prop}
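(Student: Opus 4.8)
The plan is to deduce both parts from the local theory already assembled, principally Proposition \ref{ramification} and its Remark, together with the characterization of torsion in $\widehat{F^*}$ from Proposition \ref{kummerprop}. For part a), the implication $\bar{x}\in\widehat{E_Q}\Rightarrow v_{\mathfrak{q}}(\bar{x})=0$ for $\mathfrak{q}\notin Q$ is immediate: since $\widehat{E_Q}$ was identified with a subgroup of $\widehat{F^*}$ via the injections $E_Q/E_Q^{p^n}\hookrightarrow F^*/(F^*)^{p^n}$, an element of $\widehat{E_Q}$ is represented by a compatible system $(x_n)$ with each $x_n$ a $Q$-unit, so $v_{\mathfrak{q}}(x_n)=0$ for $\mathfrak{q}\notin Q$ and all $n$, whence $v_{\mathfrak{q}}(\bar{x})=0$. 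For the converse I would argue at the level of finite layers: given $\bar{x}=(\bar{x}_n)_n$ with $v_{\mathfrak{q}}(\bar{x})=0$ for every $\mathfrak{q}\notin Q$, I want to choose the representatives $x_n\in F^*$ so that they are actually $Q$-units. The point is that $v_{\mathfrak{q}}(\bar{x})=0$ in $\Z_p$ means $v_{\mathfrak{q}}(x_n)\equiv 0 \bmod p^n$; then modify $x_n$ by a $p^n$-th power to kill the valuations outside $Q$ — here one uses that the $\mathfrak{q}$-adic valuations outside $Q$ are divisible by $p^n$ and that one can realize an arbitrary divisor of the form $p^n D$ (supported away from $Q$) as $\mathrm{div}(z^{p^n})$ modulo the class group, after possibly enlarging by a bounded amount; since only the class in $F^*/(F^*)^{p^n}$ matters this is harmless. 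Care is needed because the class group is nontrivial, but the $p$-part of the class group is finite, so this only costs a uniformly bounded power of $p$, which I would absorb exactly as in the proof of Proposition \ref{kummerprop} (replacing $\bar{x}$ by $\bar{x}^{p^a}$ changes neither the hypothesis nor $F_{\bar{x}}$ up to the equivalence noted in the Remark after that proposition). I expect this bookkeeping with the class group to be the main technical obstacle, though it is routine.

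For part b), the ``only if'' direction follows by combining part a) with Proposition \ref{ramification}: if $F_{\bar{x}}/\f_\infty$ is unramified outside $Q$, then for each $\mathfrak{q}\notin Q$ the extension is unramified at $\mathfrak{q}$, so by Proposition \ref{ramification} $\mathrm{loc}_{\mathfrak{q}}(\bar{x})$ is torsion, hence $v_{\mathfrak{q}}(\bar{x})=0$ (the valuation map kills torsion since $\widehat{F_{\mathfrak{q}}^*}\to\Z_p$ has torsion-free target); part a) then gives $\bar{x}\in\widehat{E_Q}$. For the converse under the hypothesis that all primes above $p$ lie in $Q$: suppose $\bar{x}\in\widehat{E_Q}$ and let $\mathfrak{q}\notin Q$ be any prime. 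Then $\mathfrak{q}$ does not lie above $p$ (since primes above $p$ are in $Q$), and $v_{\mathfrak{q}}(\bar{x})=0$ by part a). By the local structure recalled in the proof of Proposition \ref{ramification}, for $\mathfrak{q}\nmid p$ one has $\widehat{F_{\mathfrak{q}}^*}\simeq \omega^{\Z_p}\mu_{p^\infty}(F_{\mathfrak{q}})$, and $\widehat{F_{\mathfrak{q},\mathrm{ur}}^*}\simeq \omega^{\Z_p}$; since $v_{\mathfrak{q}}(\mathrm{loc}_{\mathfrak{q}}(\bar{x}))=0$, the element $\mathrm{loc}_{\mathfrak{q}}(\bar{x})$ lies in the torsion part $\mu_{p^\infty}(F_{\mathfrak{q}})$, hence is torsion. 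Proposition \ref{ramification} then shows $F_{\bar{x}}/\f_\infty$ is unramified at primes above $\mathfrak{q}$. As $\mathfrak{q}\notin Q$ was arbitrary, $F_{\bar{x}}/\f_\infty$ is unramified outside $Q$, completing the proof. The hypothesis on primes above $p$ is exactly what is needed so that the ``$v_{\mathfrak{q}}=0$ implies torsion'' step is valid at every $\mathfrak{q}\notin Q$ — at primes above $p$ the kernel of $\widehat{F_{\mathfrak{q}}^*}\to\widehat{F_{\mathfrak{q},\mathrm{ur}}^*}$ is $U^+$-related and $v_{\mathfrak{q}}=0$ does not force unramifiedness, which is why the converse can fail without the hypothesis.
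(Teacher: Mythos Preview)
Your approach is essentially the paper's: for b) you reduce to a) via Proposition \ref{ramification} and its Remark (exactly as the paper does in one line), and for a) you identify the key obstruction as the $p$-part of the class group of $O_Q$ and propose to correct the representatives $x_n$ by $p^{n}$-th powers up to a bounded defect $p^a$.

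There is one slip in how you ``absorb'' that bounded defect. The device you invoke from the proof of Proposition \ref{kummerprop} --- raising to the $p^a$-th power to force compatibility --- would here only produce an element of $\widehat{E_Q}$ mapping to $\bar{x}^{p^a}$ in $\widehat{F^*}$, not to $\bar{x}$; and part a) is a statement about $\bar{x}$ itself, not about $F_{\bar{x}}$, so the parenthetical ``replacing $\bar{x}$ by $\bar{x}^{p^a}$ changes neither the hypothesis nor $F_{\bar{x}}$'' does not close the gap. The paper instead does an \emph{index shift}: having found $\epsilon_n\in E_Q$ with $\epsilon_n\equiv x_n \pmod{(F^*)^{p^{n-a}}}$, one sets $\delta_m := \epsilon_{m+a}\bmod E_Q^{p^m}$ and checks, using the compatibility of the $\bar{x}_n$ together with the injectivity $E_Q/E_Q^{p^m}\hookrightarrow F^*/(F^*)^{p^m}$, that $(\delta_m)_m$ is a compatible system in $\widehat{E_Q}$ whose image in $\widehat{F^*}$ is $\bar{x}$ itself. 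With this adjustment your argument goes through.
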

\begin{proof} The second part of the proposition follows from the first one, 
the preceding proposition and the  remark after the proposition \ref{ramification}. 

Let us prove the first part.
The ``only if'' part is clear so let us prove the ``if'' part. 

Let $p^a$ be a power of $p$ that kills the $p$-primary part 
of the class group of the ring $O_Q$ of $Q$ integers 
(elements $x\in F$ such that $v_{\mathfrak{q}}(x)\geq 0 $ for $\mathfrak{q}\notin Q$).

Let $x=(\bar{x}_n )$ be in $\widehat{F^*}$ such that $v_{\mathfrak{q}}(x)=0$
if $\mathfrak{q}\notin Q$. 
Let $x_n\in F^*$ be a lift $\bar{x_n}$. Let $I(x_n )$ be the rank one projective $O_Q$-module
generated by $x_n$.  
As $v_{\mathfrak{q}} (x_n )$ is divisibleby $p^n$ for $\mathfrak{q}\notin Q$, 
there is rank one projective $O_Q$-module $I_n $ such that $I(x_n )= I_n ^{p^n}$.
The rank one module $I_n ^{p^a}$ is free. Let $y_n\in O_Q$
be a generator. We have 
$I(x_n )=I(y_n)^{p^{n-a}}$, hence there is $\epsilon_n$ a unit in $O_Q$  such that 
$x_n =y_n ^{p^{n-a}}\epsilon_n$. We see that $x_n$ and $\epsilon _n$
have the same image in $F^*/ (F^* )^{p^{n-a}}$. 
It follows that the $\epsilon_n$ define an element $\epsilon$ of $\widehat{E_Q}$
with image $x$ in  $\widehat{F^*}$. The proposition is proved. 
\end{proof} 

We will need the following lemma: 

\begin{lemma}\label{rami}  Let $T$ a finitely generated  subgroup   of  $F^*$, and let $Q$ be a finite set of finite places of $F$. Let $F_T=F(\mu_{p^{\infty}},T^{1 \over p^\infty})$  be the compositum of the extensions $F_t$ for $t\in T$. 
Then the $\Z_p$-rank
of the subgroup generated by the inertia groups above $Q$ in ${\rm Gal}(F_T/\f_\infty)$ is the same as the $\Z_p$-rank of the closure of  (the diagonal image of)  $T$ in $\Pi_{v \in Q} \widehat{F_v^*}$.
 \end{lemma}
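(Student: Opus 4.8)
The plan is to reduce Lemma \ref{rami} to the already-proved Propositions \ref{kummerprop}, \ref{ramification} and \ref{Q}, using the fact (established in the proof of \ref{kummerprop} and the remarks after it) that $\widehat{F^*}$ injects into $H^1(G_{\f_\infty}, \Z_p(1)) = \mathrm{Hom}(G_{\f_\infty}, \Z_p)(1)^0$, so that $\mathrm{Gal}(F_T/\f_\infty)$ is canonically identified with (the $\Z_p$-dual of) the closure $\overline T$ of the diagonal image of $T$ in $\widehat{F^*}$; in particular $\mathrm{rank}_{\Z_p}\mathrm{Gal}(F_T/\f_\infty) = \mathrm{rank}_{\Z_p}\overline T$ by the remark at the end of \S\ref{kummergen}. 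The inertia subgroup at a place above $\mathfrak q$ is, under this identification, governed by the local behaviour of elements of $\overline T$ at $\mathfrak q$: by the proof of Proposition \ref{ramification}, $F_{\bar x}/\f_\infty$ is unramified above $\mathfrak q$ iff $\mathrm{loc}_{\mathfrak q}(\bar x)$ is torsion in $\widehat{F_{\mathfrak q}^*}$, and more precisely the inertia group is cut out by the kernel of $\mathrm{loc}_{\mathfrak q}$ composed with the projection to the non-torsion (i.e.\ $\Z_p$-free) part of $\widehat{F_{\mathfrak q}^*}$.

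First I would set up the pairing: $\mathrm{Gal}(F_T/\f_\infty) = \mathrm{Hom}_{\Z_p}(\overline T, \Z_p)$ via the Kummer map, where $\overline T \subseteq \widehat{F^*}$ is the closure of the image of $T$ (recall $\widehat{E_Q}$, and hence $\overline T$ for $T$ finitely generated, embeds in $\widehat{F^*}$). Second, for each $\mathfrak q \in Q$ I would identify the decomposition/inertia group at a place of $F_T$ above $\mathfrak q$: the inertia subgroup $I_{\mathfrak q} \subseteq \mathrm{Gal}(F_T/\f_\infty)$ consists of those homomorphisms $\overline T \to \Z_p$ that factor through $\overline T / (\overline T \cap \ker(\mathrm{loc}_{\mathfrak q}^{\mathrm{free}}))$, where $\mathrm{loc}_{\mathfrak q}^{\mathrm{free}}: \widehat{F^*} \to \widehat{F_{\mathfrak q}^*}/(\mathrm{torsion})$ — this is exactly the dual statement to Proposition \ref{ramification}. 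Hence $I_{\mathfrak q}$ is the annihilator of $\ker(\mathrm{loc}_{\mathfrak q}^{\mathrm{free}}|_{\overline T})$, so $\mathrm{rank}_{\Z_p} I_{\mathfrak q} = \mathrm{rank}_{\Z_p}\big(\mathrm{image\ of\ } \overline T \mathrm{\ in\ } \widehat{F_{\mathfrak q}^*}\big)$. Third, the subgroup $I_Q$ generated by all the $I_{\mathfrak q}$, $\mathfrak q \in Q$, is the annihilator of $\bigcap_{\mathfrak q \in Q}\ker(\mathrm{loc}_{\mathfrak q}^{\mathrm{free}}|_{\overline T})$, which is the kernel of the diagonal map $\overline T \to \prod_{v \in Q}\widehat{F_v^*}/(\mathrm{torsion})$; taking $\Z_p$-ranks (all modules in sight are finitely generated $\Z_p$-modules, so rank is additive on short exact sequences up to torsion) gives $\mathrm{rank}_{\Z_p} I_Q = \mathrm{rank}_{\Z_p}\big(\mathrm{closure\ of\ diagonal\ image\ of\ } T \mathrm{\ in\ } \prod_{v\in Q}\widehat{F_v^*}\big)$, since passing to the torsion-free quotients of the local completions does not change the $\Z_p$-rank of the image.

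The main obstacle, as I see it, is purely bookkeeping: one must be careful that the inertia group at a place $v \mid \mathfrak q$ of $F_T$ is insensitive to the (finite) torsion in $\widehat{F_{\mathfrak q}^*}$ — this is what makes ``unramified'' equivalent to ``$\mathrm{loc}_{\mathfrak q}(\bar x)$ torsion'' rather than ``$\mathrm{loc}_{\mathfrak q}(\bar x) = 0$'' — and correspondingly the $\Z_p$-rank of the image of $T$ in $\prod_{v\in Q}\widehat{F_v^*}$ equals that of its image in the torsion-free quotient, so the two rank statements literally coincide. One also needs that forming the closure and then the various images/intersections commutes appropriately with taking $\Z_p$-ranks; since $T$ is finitely generated, all the groups $\overline T$, its local images, and the kernels involved are finitely generated $\Z_p$-modules, so the rank is additive in short exact sequences and the argument goes through without subtlety. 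A small point to check is that the inertia at different places $v, v'$ above the same $\mathfrak q$ are conjugate in $\mathrm{Gal}(F_T/F)$ but, $\mathrm{Gal}(F_T/\f_\infty)$ being abelian, they coincide as subgroups of $\mathrm{Gal}(F_T/\f_\infty)$, so ``the inertia groups above $Q$'' is well-defined; this is exactly the ``almost totally ramified'' framework set up in \S\ref{notation}.
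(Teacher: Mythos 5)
Your argument is essentially the paper's: both proofs reduce the lemma to Propositions \ref{kummerprop} and \ref{ramification} via the Kummer pairing, comparing the global Kummer map $T \to \mathrm{Hom}(\mathrm{Gal}(F_T/\f_\infty),\Z_p(1))$ with its local analogues over $\prod_{v\in Q}\widehat{F_v^*}$ and observing both have torsion kernel; your annihilator phrasing is just the dual of the paper's commutative square.

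One small imprecision: you justify the well-definedness of "the inertia group above $\mathfrak q$" by saying the inertia groups are conjugate in $\mathrm{Gal}(F_T/F)$ and $\mathrm{Gal}(F_T/\f_\infty)$ is abelian. That handles places of $F_T$ above a \emph{fixed} place of $\f_\infty$, but the places of $\f_\infty$ above a given $\mathfrak q\in Q$ are permuted by $\mathrm{Gal}(\f_\infty/F)$, and the conjugating elements of $\mathrm{Gal}(F_T/F)$ lie outside the abelian subgroup, so abelianness alone does not give coincidence. The paper's correct reason is that $\mathrm{Gal}(\f_\infty/F)$ acts on $\mathrm{Gal}(F_T/\f_\infty)$ through the cyclotomic character $\chi_p$, i.e.\ by a unit scalar in $\Z_p^*$; any $\Z_p$-submodule is therefore stable under this conjugation, so the inertia subgroups at the various places of $\f_\infty$ above $\mathfrak q$ all coincide. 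With this fix the rest of your rank bookkeeping goes through exactly as in the paper.
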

\begin{proof} For $v\in Q$, let $v'$ be a prime of $\f _\infty$
above $v$ and let $I_{v'} $ be the inertia subgroup of $\mathrm{Gal}(F_T /\f _{\infty})$
at $v'$. 
As the action of $\mathrm{Gal} (\f_\infty / F)$ on 
$\mathrm{Gal} (F_T /\f_\infty )$ is by the cyclotomic character $\chi _p$,
one easily sees that the subgroup $I_{v'}$ does not depend of $v'$
and we call it $I_v$.
We have the following commutative diagram:
$$\begin{array}{ccc}
T & \rightarrow & \mathrm{Hom}(\mathrm{Gal}(F_T /\f _{\infty}), \Zp (1)) \\
\downarrow & & \downarrow\\	
\prod_{v\in Q} \widehat{F_v ^* }& \rightarrow & \prod_{v\in Q} 
 \mathrm{Hom}(I_v, \Zp (1)).
\end{array}$$
The lemma follows from the fact that the horizontal arrows have torsion
kernels by propositions \ref{kummerprop} and \ref{ramification}. 
 
\end{proof}

 \section{Elements of Iwasawa  theory}\label{Iwasawa}

Let ${\el}_\infty$ be the maximal abelian $p$-extension of ${\f}_\infty$ that is unramified everywhere. We set ${\X}_\infty={\rm Gal}(  {\el}_\infty/{\f}_\infty)$. It decomposes as
${\X}_\infty=\X_\infty^+ \oplus \X_\infty^-$ under the action of complex conjugation which corresponds to
$\el_\infty$ being the compositum of two linearly disjoint extensions $\el_\infty^+$ and $\el_\infty^-$.   The Galois group $\X_\infty$ (respectively $\X_\infty^+,\X_\infty^-$)  is the inverse limit of the $p$-parts of the class groups, denoted by $\EA_n$,  of $\f_n$  (resp., $+$ and $-$ parts, $\EA_n^+$ and $\EA_n^-$) ($n \geq 0$) under the norm maps. It is conjectured by Greenberg that $\X_\infty^+$ is a finite group. We have the theorem of Iwasawa that under the natural Galois action of $\Lambda=\Z_p[[T]]$, ${\X}_\infty$ is a finitely generated  torsion $\Lambda$-module.
 
 Let ${M}_\infty$ be the maximal abelian $p$-extension of $F_\infty$ that is unramified outside $p$. We set ${Y}_\infty={\rm Gal}(  {M}_\infty/F_\infty)$.  Again by a theorem of Iwasawa, $Y_\infty$ is a finitely generated  torsion $\Lambda$-module. (It is a consequence of the ``weak Leopoldt conjecture'' that he proved.)  We denote by $Y'_\infty={\rm Gal}(M_\infty/F)$, which sits inside an exact sequence
 \begin{equation}\label{degree1}
0 \rightarrow Y_\infty  \rightarrow Y_\infty'  \rightarrow \Z_p \rightarrow 0.\end{equation} 
We call the last map the degree map. Thus $Y_\infty$ is the   $\Z_p$-submodule  of  $Y'_\infty$ of elements of degree $0$. 

Recall  a couple of  facts:

\begin{itemize}

\item $Y_\infty,\X^-_\infty$  have no non-zero finite $\Lambda$-modules (cf. Propositions 15.36 and  13.28  of \cite{Washington}). This may also be deduced from 11.4.4 of \cite{NSW}  which states that $\X^-_\infty$ 
is the adjoint of a  finitely generated torsion $\Lambda$-module, and th. 11.4.8 of \cite{NSW}.

\item $Y_\infty \otimes \Q_p$ and $\X_\infty^- \otimes \Q_p$  are finite dimensional $\Q_p$-vector spaces.  The $\mu$-invariant of $F_\infty$ is not  known to be zero, and thus we do not know if $Y_\infty$  is a finitely generated $\Z_p$-module. The fact that $Y_\infty$ is a finitely generated $\Lambda$-module, implies that $Y_\infty/(\gamma-1)Y_\infty$ is a finitely generated $\Z_p$-module.

\end{itemize}

\subsection{\it Iwasawa involution and adjoints} For a $\Lambda$-module $X$ we denote by $X^0$ (Iwasawa dual)  the module whose underlying module is the same but where the $\Lambda$ action, denoted  by $.$  is defined by $f(T).x=f((1+T)^{-1}-1)x$ with the action on the right the original action. (This corresponds to defining the new $\Gamma$-action to be $\gamma.x=\gamma^{-1}x$).  It gives an involution on the category of Iwasawa modules. 
For a discrete $\Lambda$-module $M$, we endow ${\rm Hom}_{\Z_p}(M,\Q_p/\Z_p)$ with the $\Lambda$-action defined by
$ \gamma f(m)=f(\gamma^{-1}m)$. More generally for $\Gamma$-modules, either discrete or compact,  $M,N$, we endow  ${\rm Hom}_{\Z_p}(M,N)$  the group of continuous  $\Z_p$-linear homomorphisms  with the $\Lambda$-module structure given  by $\gamma f(m)=\gamma f(\gamma^{-1} m)$.
  \begin{lemma}\label{opposite}
   For a $\Lambda$-module $M$, such that $ M \otimes \Q_p$  is a finite dimensional vector space, we have a non-canonical  $\Lambda \otimes \Q_p$-isomorphism ${\rm Hom}_{\Z_p}(M,\Q_p)=M^0 \otimes \Q_p$.
   
  \end{lemma}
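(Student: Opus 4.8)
The plan is to construct the isomorphism after tensoring with $\Q_p$ by reducing to the structure theory of finitely generated $\Lambda$-modules up to pseudo-isomorphism, or more directly, by working over the ring $\Lambda \otimes \Q_p = \Q_p[[T]]$ and exploiting that $M \otimes \Q_p$ being finite-dimensional forces strong constraints on the module. First I would observe that $M \otimes \Q_p$ is a finitely generated torsion $\Lambda \otimes \Q_p$-module which is moreover finite-dimensional over $\Q_p$; hence it is annihilated by a polynomial $g(T) \in \Q_p[T]$ with nonzero constant term (the powers of $\gamma - 1$ on a finite-dimensional space cannot be injective, but also $T$ itself acts invertibly on any piece where... — more carefully, the support of $M \otimes \Q_p$ consists of height-one primes, and finite $\Q_p$-dimension means $M \otimes \Q_p$ is killed by some $g(T)$ coprime to $T$, so $T = \gamma - 1$ acts invertibly). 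Then $M \otimes \Q_p \cong \bigoplus_i \Q_p[[T]]/(f_i(T)^{e_i})$ as a $\Lambda \otimes \Q_p$-module by the structure theorem over the PID-like ring $\Q_p[[T]]$ (or rather its localization/the relevant quotient), with all $f_i$ coprime to $T$.

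Next I would compute both sides on such a cyclic piece $V = \Q_p[[T]]/(f(T)^e)$. On one hand, $V^0 \otimes \Q_p$ is $V$ with $\gamma$ replaced by $\gamma^{-1}$, i.e.\ $T$ replaced by $(1+T)^{-1} - 1$; this is $\Q_p[[T]]/(\tilde f(T))$ where $\tilde f$ is the polynomial whose roots are the inverses of the roots of $f^e$. On the other hand, $\mathrm{Hom}_{\Z_p}(V, \Q_p)$ with the action $\gamma f(m) = \gamma f(\gamma^{-1}m)$: since $V$ is finite-dimensional over $\Q_p$, this is just the $\Q_p$-linear dual with the contragredient action twisted by $\gamma$ acting on the target, which on generalized eigenspaces sends an eigenvalue $\lambda$ of $\gamma$ to $\chi(\gamma) \cdot \lambda^{-1}$... — wait, here there is no Tate twist in the statement, so $N = \Q_p$ with trivial $\Gamma$-action and the formula $\gamma f(m) = \gamma f(\gamma^{-1}m) = f(\gamma^{-1}m)$ just gives the contragredient, so $\gamma$ acts on $\mathrm{Hom}(V,\Q_p)$ by the transpose-inverse, hence with inverse eigenvalues. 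Thus both $\mathrm{Hom}_{\Z_p}(M,\Q_p)$ and $M^0 \otimes \Q_p$ have, for each eigenvalue $\lambda$ of $\gamma$ on $M \otimes \Q_p$, the eigenvalue $\lambda^{-1}$ with the same multiplicity and, I would check, the same Jordan block structure (inversion is an automorphism of $\overline{\Q_p}^*$ fixing the module category's decomposition). Hence the two $\Lambda \otimes \Q_p$-modules are abstractly isomorphic, giving the non-canonical isomorphism.

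The cleanest organization avoids eigenvalue bookkeeping: I would argue that for any finite-dimensional $\Q_p$-vector space $V$ with a $\Q_p$-linear automorphism $\gamma$, the $\Q_p[\gamma,\gamma^{-1}]$-module $\mathrm{Hom}_{\Q_p}(V,\Q_p)$ (contragredient) is isomorphic to $V$ with $\gamma$ replaced by $\gamma^{-1}$, because both are obtained from $V$ by applying the ring automorphism $\gamma \mapsto \gamma^{-1}$ up to the canonical isomorphism $V \cong V^{**}$ — indeed for a single invertible operator, $V$ and its transpose-inverse dual have the same rational canonical form after the substitution $t \mapsto t^{-1}$. Since $M \otimes \Q_p$ is such a $V$ (with $\gamma$ topologically generating the $\Lambda \otimes \Q_p$-action, which factors through $\Q_p[\gamma,\gamma^{-1}]$ as $T$ acts invertibly), and since $M^0 \otimes \Q_p$ is by definition $M \otimes \Q_p$ with $\gamma \rightsquigarrow \gamma^{-1}$ while $\mathrm{Hom}_{\Z_p}(M,\Q_p) = \mathrm{Hom}_{\Q_p}(M \otimes \Q_p, \Q_p)$ is the contragredient, we are done.

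I expect the main obstacle to be the first step: justifying rigorously that the $\Lambda \otimes \Q_p$-module structure on $M \otimes \Q_p$ factors through $\Q_p[\gamma,\gamma^{-1}]$, i.e.\ that $T = \gamma - 1$ acts invertibly on the finite-dimensional space $M \otimes \Q_p$. This is where finite-dimensionality is used essentially; one must rule out $T$ acting nilpotently on a nonzero subquotient, which follows because $M$ finitely generated over $\Lambda$ and $M\otimes\Q_p$ finite over $\Q_p$ forces the characteristic ideal to be coprime to $(T)$ — equivalently $M_\Gamma \otimes \Q_p \to M \otimes \Q_p$ considerations — and I would cite the standard fact (as used for $Y_\infty$, $\X_\infty^-$ earlier in this section) that such modules have $T$ acting as an automorphism after inverting $p$. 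The eigenvalue/Jordan-form matching in the second step is then routine linear algebra over $\overline{\Q_p}$ followed by a descent-of-the-isomorphism argument, and the non-canonicity is exactly the choice of basis realizing the rational canonical form.
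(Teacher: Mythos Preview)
Your core argument in the ``cleanest organization'' paragraph is correct and is exactly the paper's one-line proof: on $M^0\otimes\Q_p$ the generator $\gamma$ acts by $G^{-1}$ and on $\mathrm{Hom}_{\Z_p}(M,\Q_p)$ it acts by $(G^{-1})^T$, and a matrix is conjugate to its transpose over any field.

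Your final paragraph, however, contains a genuine error. The claim that $T=\gamma-1$ acts invertibly on $M\otimes\Q_p$, equivalently that the characteristic ideal is coprime to $(T)$, is false in general: take $M=\Lambda/(T)=\Z_p$ with trivial $\Gamma$-action. Worse, for the module $M=Y_\infty$ to which the lemma is later applied, invertibility of $T$ on $Y_\infty\otimes\Q_p$ is precisely Leopoldt's conjecture, so it is certainly not a ``standard fact used earlier in this section''. Fortunately the step you are trying to justify does not need this. What is actually required is that a $\gamma$-equivariant $\Q_p$-linear map between finite-dimensional $\Lambda\otimes\Q_p$-modules is automatically $\Lambda\otimes\Q_p$-linear. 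This holds because $\Lambda\otimes\Q_p=\Z_p[[T]][1/p]$ is a principal ideal domain (Weierstrass preparation), so the annihilator of any nonzero finite-dimensional module is generated by a distinguished polynomial $P(T)$, and the quotient $(\Lambda\otimes\Q_p)/(P(T))\cong\Q_p[T]/(P(T))$ is generated as a $\Q_p$-algebra by $\gamma$. The same correction applies to your first approach via cyclic summands: the $f_i$ need not be coprime to $T$.
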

  
  \begin{proof}
   This  follows from the elementary fact  that over a field $K$, a  matrix $\in M_n(K)$ and its transpose are conjugate under the action of $\GL_n(K)$.
  \end{proof}
  
 We denote by $\tilde{\alpha}(X)$ the  Iwasawa dual of  the adjoint of a finitely generated, torsion $\Lambda$-module $X$, see \S 1 of  article 52  of \cite{Iwasawa}, or \S 15.5 of \cite{Washington}.  Thus the adjoint $\alpha(X)=\tilde \alpha(X)^0$.
 
 \begin{lemma}\label{adjoints}(Iwasawa)
 We have that $X$ and $\alpha (X)$ are pseudo-isomorphic, hence 
 $\tilde{\alpha}(X) $ is pseudo-isomorphic  to $X^0$. 
 \end{lemma}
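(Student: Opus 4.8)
The plan is to reduce to elementary modules via the structure theorem, compute the adjoint of a cyclic elementary module, and then transport the conclusion to $\tilde{\alpha}$ through the Iwasawa-dual involution. First I would recall from \S 1 of article 52 of \cite{Iwasawa} (equivalently \S 15.5 of \cite{Washington}) the construction of $\alpha(X)$ as an inverse limit of Pontryagin duals of finite quotients of $X$ modulo the ideals generated by $\nu_{n,m}=((1+T)^{p^{n}}-1)/((1+T)^{p^{m}}-1)$, and extract the three formal properties established in loc.\ cit.: (i) $\alpha$ is additive on finite direct sums; (ii) $\alpha$ annihilates finite (equivalently, pseudo-null) $\Lambda$-modules; and (iii) $\alpha$ carries a short exact sequence of finitely generated torsion $\Lambda$-modules to one exact up to a finite error term, so that a pseudo-isomorphism $X\to Y$ induces a pseudo-isomorphism $\alpha(Y)\to\alpha(X)$. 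A convenient way to organize (ii), (iii) and the cyclic computation below is to identify $\alpha(X)$, up to the normalization of the $\Lambda$-action, with $\mathrm{Ext}^{1}_{\Lambda}(X,\Lambda)$; then (ii) and (iii) fall out of the long exact $\mathrm{Ext}$-sequence, using that $\mathrm{Ext}^{i}_{\Lambda}(N,\Lambda)=0$ for $i\le 1$ when $N$ is finite and that $\mathrm{Ext}^{2}_{\Lambda}(-,\Lambda)$ takes finite values on torsion modules.

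Next, by the structure theorem I would fix a pseudo-isomorphism $X\to E$ with $E=\bigoplus_{i=1}^{r}\Lambda/(g_{i})$, each $g_{i}$ a power of a height-one prime of $\Lambda$, i.e.\ $g_{i}=p^{a_{i}}$ or $g_{i}=\varphi_{i}^{b_{i}}$ with $\varphi_{i}$ distinguished irreducible. By (iii) the induced map $\alpha(E)\to\alpha(X)$ is a pseudo-isomorphism, by (i) we have $\alpha(E)=\bigoplus_{i}\alpha(\Lambda/(g_{i}))$, and since pseudo-isomorphism of torsion $\Lambda$-modules is an equivalence relation the lemma reduces to the single cyclic statement $\alpha(\Lambda/(g))\sim\Lambda/(g)$ for $g$ a prime power. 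For this I would compute $\mathrm{Ext}^{1}_{\Lambda}(\Lambda/(g),\Lambda)$ from the free resolution $0\to\Lambda\xrightarrow{g}\Lambda\to\Lambda/(g)\to 0$, which returns $\Lambda/(g)$, or run the defining $\nu_{n,m}$-system through a Mittag--Leffler/snake argument; in either case the answer, after discarding a finite submodule, is $\Lambda/(g)$. Assembling, $X\sim E=\alpha(E)\sim\alpha(X)$, which is the first assertion. For the second, I would apply the Iwasawa-dual functor $(\cdot)^{0}$: it is an exact auto-equivalence of the category of $\Lambda$-modules, so it sends finite modules to finite modules and hence pseudo-isomorphisms to pseudo-isomorphisms; since $\tilde{\alpha}(X)=\alpha(X)^{0}$ by definition, $X\sim\alpha(X)$ yields $X^{0}\sim\alpha(X)^{0}=\tilde{\alpha}(X)$, as claimed.

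I expect the only genuinely non-formal point to be the cyclic computation $\alpha(\Lambda/(g))\sim\Lambda/(g)$, and within it the bookkeeping of the $\Lambda$-action — equivalently, of the transition maps in the defining inverse limit — so that one lands on $\Lambda/(g)$ and not on its involute $\Lambda/(g^{\iota})$, $g^{\iota}(T)=g((1+T)^{-1}-1)$; that twist is exactly what separates $\alpha$ from $\tilde{\alpha}$. One also has to set up the $\nu_{n,m}$-system with some care when $g$ divides some $(1+T)^{p^{j}}-1$, where the plain quotients $\Lambda/(g,(1+T)^{p^{n}}-1)$ fail to be finite. Everything else — the structure-theorem reduction and the verification of (i)--(iii) — is routine homological and commutative algebra over $\Lambda=\Z_{p}[[T]]$.
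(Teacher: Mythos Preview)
The paper does not give a proof of this lemma at all: it is stated as a result of Iwasawa, with pointers to \S1 of article 52 of \cite{Iwasawa} and \S15.5 of \cite{Washington}, and the text moves directly to the next subsection. Your proposal is a correct and standard proof sketch---the structure-theorem reduction to elementary modules, the cyclic computation via $\mathrm{Ext}^1_\Lambda(\Lambda/(g),\Lambda)\simeq\Lambda/(g)$, and the transport through $(\cdot)^0$ are exactly how the result is established in the references cited---so there is nothing to compare against in the paper itself; you have simply filled in what the authors chose to quote.
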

  
\subsection{\it Iwasawa pairing}

The following basic theorem of Iwasawa and Coates  is important for us.

\begin{theorem}\label{iwa} (Iwasawa)
(i) We have a  perfect, $\Gamma$-equivariant, $\Z_p$-linear pairing 
$$Y_\infty \times \EA_\infty^- \rightarrow \Q_p/\Z_p (1),$$  which we call the Iwasawa pairing, equivalently $$Y_\infty={\rm Hom}_{\Z_p}(\EA_\infty^-,\Q_p/\Z_p(1)),$$ which we call the Iwasawa isomorphism. 
 
 (ii) We have that  $\tilde{\alpha}(\X_\infty^-)$ is pseudo-isomorphic to ${\rm Hom}(\EA_\infty^-,\Q_p/\Z_p)$. 
 
 (iii)
We have  a natural $\Gamma$-equivariant, $\Q_p$-linear  perfect pairing  $$(Y_\infty \otimes \Q_p )\times (\X_\infty^- \otimes \Q_p)  \rightarrow \Q_p (1),$$  or equivalently $$Y_\infty \otimes \Q_p={\rm Hom}_{\Lambda \otimes \Q_p}(\X_\infty^-\otimes \Q_p,\Q_p(1)).$$ 
\end{theorem}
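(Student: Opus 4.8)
\textbf{Proof plan for Theorem \ref{iwa}.}

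The plan is to deduce all three parts from class field theory together with the structural facts about $Y_\infty$ and $\X_\infty^-$ recalled above. For part (i), the starting point is the Artin reciprocity description of $Y_\infty$: since $M_\infty$ is the maximal abelian $p$-extension of $F_\infty$ unramified outside $p$, global class field theory identifies $Y_\infty$ with a quotient of the id\`ele class group of $F_\infty$, and passing to the inverse limit over the layers $F_n$ one expresses $Y_\infty$ as an inverse limit of (completed) id\`ele class groups modulo local units at $p$. Dually, Kummer theory and the finiteness of the everywhere-unramified class groups $\EA_n^-$ produce, for each finite layer, a perfect pairing between the relevant unit/class group data and $\EA_n^-$; the key point is to check that these pairings are compatible under the norm maps on the $Y$-side and the inclusion (or corestriction) maps on the $\EA^-$-side, so that they pass to the limit. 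The $\Gamma$-equivariance and the Tate twist $(1)$ come from the Weil pairing / the cup-product pairing in Galois cohomology, where the twist is forced by the cyclotomic action on $\mu_{p^\infty}$. Concretely, one realizes the pairing as the limit over $n$ of the pairings $\EA_n^- \times \mathrm{Gal}(M_n/F_n) \to \mu_{p^\infty}$ coming from Kummer theory applied to $p$-units, and verifies perfectness layer-by-layer using that $\EA_n^-$ is finite and that there are no exceptional units contributing on the minus side (here one uses $p$ odd and $F$ totally real, so that the minus part of the units is essentially just the roots of unity).

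For part (ii), the plan is to combine (i) with Iwasawa's theory of adjoints as recalled in Lemma \ref{adjoints}. Applying $\mathrm{Hom}_{\Z_p}(-,\Q_p/\Z_p)$ to the Iwasawa isomorphism $Y_\infty = \mathrm{Hom}_{\Z_p}(\EA_\infty^-,\Q_p/\Z_p(1))$ and using Pontryagin duality for the compact module $Y_\infty$ against the discrete module $\EA_\infty^-$, one gets $\mathrm{Hom}(\EA_\infty^-,\Q_p/\Z_p)$ expressed in terms of $Y_\infty$ up to a twist by $(-1)$. On the other hand, by the theorem of Iwasawa cited in \S\ref{Iwasawa} (that $\X_\infty^-$ is the adjoint of a finitely generated torsion $\Lambda$-module, i.e. th. 11.4.4 and 11.4.8 of \cite{NSW}), one knows $\X_\infty^-$ and $\tilde\alpha(\X_\infty^-)$ are related to $Y_\infty$ by a pseudo-isomorphism — this is essentially the reflection-type statement relating the unramified-everywhere minus class group to the ramified-at-$p$ module. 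Assembling these identifications, keeping track of the Tate twist, yields that $\tilde\alpha(\X_\infty^-)$ is pseudo-isomorphic to $\mathrm{Hom}(\EA_\infty^-,\Q_p/\Z_p)$. I would present this as a formal consequence, since the substantive input (no finite submodules, torsionness, the adjoint description) has already been quoted.

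For part (iii), the plan is to rationalize: tensor the isomorphism of (i) with $\Q_p$. Since $Y_\infty \otimes \Q_p$ and $\X_\infty^- \otimes \Q_p$ are finite-dimensional $\Q_p$-vector spaces, $\mathrm{Hom}_{\Z_p}(\EA_\infty^-,\Q_p/\Z_p(1)) \otimes \Q_p$ becomes $\mathrm{Hom}_{\Q_p}(\X_\infty^- \otimes \Q_p, \Q_p(1))$ after using that $\EA_\infty^-$ is, up to the $\mu$-invariant ambiguity, the Pontryagin dual of $\X_\infty^-$ — more precisely one uses Lemma \ref{opposite} and Lemma \ref{adjoints} to pass between $\X_\infty^-$, its adjoint, and the dual of $\EA_\infty^-$ at the level of $\Lambda\otimes\Q_p$-modules, where pseudo-isomorphisms become isomorphisms. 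The $\Lambda\otimes\Q_p$-linearity of the resulting pairing is then automatic from the $\Gamma$-equivariance in (i). \emph{The main obstacle} I anticipate is the compatibility of the finite-level pairings with the transition maps in the two inverse/direct systems, and correctly bookkeeping the Tate twist and the passage from the adjoint of $\X_\infty^-$ to the dual of $\EA_\infty^-$ without losing track of a twist by $(1)$; the $\mu$-invariant being possibly nonzero means one genuinely cannot treat $Y_\infty$ as finitely generated over $\Z_p$, so part (i) must be proved integrally and only parts (ii)–(iii) may invoke pseudo-isomorphism.
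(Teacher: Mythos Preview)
Your plan is broadly correct and in the same spirit as the paper's, which treats this theorem essentially as a citation: part (i) is referred to \cite{Iwasawa}, \cite{Coates}, and Proposition 13.32 of \cite{Washington} (or Theorem 11.4.3 of \cite{NSW}); part (ii) to Proposition 15.34 of \cite{Washington} (or Theorem 11.1.8 of \cite{NSW}); and only part (iii) receives an argument, which chains (i) with the adjoint description. Your sketch for (i) is a reasonable outline of what lies behind those references.

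There is one point worth flagging in your plan for (iii). You propose to pass from $\mathrm{Hom}(\EA_\infty^-,\Q_p/\Z_p(1))\otimes\Q_p$ to $\mathrm{Hom}_{\Q_p}(\X_\infty^-\otimes\Q_p,\Q_p(1))$ via Lemma \ref{opposite} and Lemma \ref{adjoints}. That works to produce \emph{some} $\Lambda\otimes\Q_p$-isomorphism, but Lemma \ref{opposite} is explicitly non-canonical (it rests on a matrix being conjugate to its transpose), so your route does not obviously deliver the \emph{natural} pairing the statement claims. The paper instead invokes Theorem 11.1.8 of \cite{NSW} to get a canonical isomorphism $Y_\infty\simeq\alpha(\X_\infty')(1)$ for a finite-index submodule $\X_\infty'\subset\X_\infty^-$, and then Corollary 1.5.7 of \cite{NSW}, which says that for a finitely generated torsion $\Lambda$-module $X$ without $p$-torsion one has $\alpha(X)\simeq\mathrm{Hom}_{\Z_p}(X,\Z_p)$ canonically; after killing $p^\infty$-torsion and tensoring with $\Q_p$ this yields the natural identification $Y_\infty\otimes\Q_p\simeq\mathrm{Hom}_{\Z_p}(\X_\infty^-,\Z_p(1))\otimes\Q_p$. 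Also, your aside that ``$\EA_\infty^-$ is, up to the $\mu$-invariant ambiguity, the Pontryagin dual of $\X_\infty^-$'' is not quite right: the Pontryagin dual of $\EA_\infty^-$ is only pseudo-isomorphic to $(\X_\infty^-)^0$, with the Iwasawa involution intervening---this is exactly what (ii) and Lemma \ref{adjoints} encode.
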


\begin{proof}

(i) This is in \cite{Iwasawa} and  \cite{Coates} (see also  Proposition 13.32 of \cite{Washington} or Theorem 11.4.3 of \cite{NSW}).

(ii) Proposition 15.34 of \cite{Washington} and its proof, or Theorem 11.1.8 of \cite{NSW}.

(iii) 
Theorem  11.1.8 of \cite{NSW}      gives an isomorphism of 
$Y_\infty={\rm Hom}_{\Z_p}(\EA_\infty^-,\Q_p/\Z_p(1))$
to $\alpha (\X_\infty ')(1)$
where $\X_\infty '$ is a sub $\Lambda $-module of    $\X_\infty$ of finite 
index. The natural map 
$\alpha (\X _\infty )(1)\rightarrow \alpha(\X _\infty ')(1)$ is an isomorphism after $\otimes \Qp $.
It is the same for the natural map  $\alpha(\X_\infty  /\{p^\infty-\mathrm{torsion}\})(1)\rightarrow
\alpha (\X_\infty )(1)$. 
As  for $X$ finitely generated torsion $\Lambda$-module
without $p$-torsion, $\alpha (X)$ is isomorphic to $\mathrm{Hom}_{\Zp} (X, {\Zp })$
(corollary 1.5.7. of \cite{NSW}), we get an isomorphism 
of $\alpha(\X_\infty  /p^*-\mathrm{tors})(1)$ to $\mathrm{Hom}_{\Zp} (\X _\infty , \Zp (1))$.  

\end{proof}

\vskip .5cm


\vskip .5cm

\section{Degree 0 divisors on Frobenius elements}\label{degree}

  We observe that $(\gamma-1)Y_\infty$ is the closed commutator subgroup of $Y'_\infty$. Thus  as $Y'_\infty={\rm Gal}(M_\infty/F)$ and $M_\infty$ is ramified only at the places above $p$, for each finite 
 place  $q$ of $F$  away from  $p$ we can consider the {\it Frobenius element} ${\rm Frob}_q$ of $Y'_\infty/(\gamma-1)Y_\infty$. As no prime $q$ of $F$ is fully decomposed in the cyclotomic
 extension $F_\infty /F $, we see that $\mathrm{deg}(\mathrm{Frob}_q )\not= 0$
 for every $q$.

We have an  exact sequence  of $\Z_p$-modules deduced from (\ref{degree1}) that will also be of importance to us:
\begin{equation}\label{degree2} 0 \rightarrow Y_\infty /(\gamma-1)Y_\infty  \rightarrow Y_\infty'/(\gamma-1)Y_\infty  \rightarrow \Z_p \rightarrow 0.\end{equation}
We consider a finite set of finite places $Q=\{q\}$ of $F$ away from $p$, and thus unramified in $M_\infty/F$. 

\begin{definition}
Let $M'_Q$ be the $\Z_p$-submodule of $Y'_\infty/(\gamma-1)Y_\infty$ generated by the ${\rm Frob}_q$'s for $q \in Q$, and $M_Q$ the $\Z_p$-submodule of $M'_Q$  that is mapped to 0  under the map $Y_\infty'/(\gamma-1)Y_\infty  \rightarrow \Z_p$ of (\ref{degree2}).
We call $M_Q$ the (degree 0) Frobenius module (attached to $Q$).

\end{definition}

\begin{lemma}\label{trivial}
$M_Q$ is the $\Z_p$-span of the  degree $0$,  $\Z_p$-submodules $M_{q,q'}$ generated by
${\rm Frob}_q, {\rm Frob}_{q'}$ for $q,q' \in Q$, where in fact we may hold a $q' \in Q$ fixed as long as  the subgroup generated by the image of ${\rm Frob}_{q'}$ in $\Gamma$ contains
that generated by ${\rm Frob}_q$ for all $q \in Q$ .
\end{lemma}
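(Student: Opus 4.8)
The plan is to unwind the definitions and exploit the fact that $\mathrm{deg}(\mathrm{Frob}_q)\neq 0$ for every finite place $q$ of $F$ away from $p$, together with the exact sequence (\ref{degree2}). First I would fix $q'\in Q$ with the stated property: the cyclic subgroup of $\Gamma$ generated by the image of $\mathrm{Frob}_{q'}$ contains the image of $\mathrm{Frob}_q$ for every $q\in Q$. Such a $q'$ exists because each image lies in $\Gamma\cong\Z_p$, all are nonzero (no prime away from $p$ is fully decomposed in $F_\infty/F$), and the subgroups of $\Z_p$ are totally ordered; one takes $q'$ realising the minimal valuation among the $\mathrm{deg}(\mathrm{Frob}_q)$, i.e. the largest among the finitely many subgroups $\overline{\langle\mathrm{deg}(\mathrm{Frob}_q)\rangle}$. (If $Q$ itself does not contain such a $q'$, the statement should be read as allowing $q'$ to range over $Q$ with the weaker conclusion that the spans $M_{q,q'}$ with both indices in $Q$ generate $M_Q$; in any case the key content is the reduction below.)

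Next I would show $M_Q$ is spanned by the $M_{q,q'}$ for this fixed $q'$. Write $d_q=\mathrm{deg}(\mathrm{Frob}_q)\in\Z_p$ and $d'=\mathrm{deg}(\mathrm{Frob}_{q'})$; by choice of $q'$ there is $c_q\in\Z_p$ with $d_q=c_q d'$. Then $\mathrm{Frob}_q-c_q\,\mathrm{Frob}_{q'}$ lies in the degree-$0$ part, hence in $M_Q$, and visibly in $M_{q,q'}$. Conversely $M'_Q$ is by definition the $\Z_p$-span of all $\mathrm{Frob}_q$, $q\in Q$, so any element of $M_Q\subseteq M'_Q$ can be written $\sum_{q\in Q} a_q\,\mathrm{Frob}_q$ with $\sum_q a_q d_q=0$. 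Subtracting $\big(\sum_q a_q c_q\big)\mathrm{Frob}_{q'}$ (whose degree is $\sum_q a_q c_q d'=\sum_q a_q d_q=0$, so this multiple of $\mathrm{Frob}_{q'}$ is itself in $M_Q$, indeed in $M_{q',q'}=0$... more precisely it equals the degree of the original element times nothing) — cleanly: rewrite $\sum_q a_q\mathrm{Frob}_q=\sum_q a_q(\mathrm{Frob}_q-c_q\mathrm{Frob}_{q'})+\big(\sum_q a_q c_q\big)\mathrm{Frob}_{q'}$, and the last term has degree $\sum_q a_q d_q=0$, so since $d'\neq 0$ it forces $\sum_q a_q c_q=0$. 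Thus every element of $M_Q$ is a $\Z_p$-combination of the elements $\mathrm{Frob}_q-c_q\mathrm{Frob}_{q'}\in M_{q,q'}$, proving $M_Q=\sum_{q\in Q} M_{q,q'}$.

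The remaining point, that each $M_{q,q'}$ is itself a degree-$0$ submodule (so the sum makes sense as asserted), is immediate: $M_{q,q'}$ is the $\Z_p$-span of $\mathrm{Frob}_q,\mathrm{Frob}_{q'}$ intersected with the kernel of the degree map, and it is generated by the single element $\mathrm{Frob}_q-c_q\mathrm{Frob}_{q'}$ (the kernel of $\Z_p^2\to\Z_p$, $(a,b)\mapsto a d_q+b d'$, is a rank-one free $\Z_p$-module with generator $(d', -d_q)$ up to the common factor, i.e. $(1,-c_q)$ after dividing by $d'$, using $d'\mid d_q$). I do not expect a serious obstacle here; the only thing requiring a moment's care is the selection of $q'$ and the observation that the subgroups $\overline{\langle d_q\rangle}$ of $\Z_p$ are totally ordered by inclusion, which is exactly why a single $q'$ can be made to dominate all the others. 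Everything else is linear algebra over $\Z_p$ with the degree functional.
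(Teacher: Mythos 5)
Your argument is correct and coincides with the paper's proof: both select a dominant $q'$ (existence guaranteed because subgroups of $\Z_p\cong\Gamma$ are totally ordered and $Q$ is finite) and then rewrite an arbitrary degree-$0$ combination $\sum_q a_q\,\mathrm{Frob}_q$ as a sum of degree-$0$ pair combinations $a_q(\mathrm{Frob}_q-c_q\mathrm{Frob}_{q'})$, the residual multiple of $\mathrm{Frob}_{q'}$ vanishing because $d'\neq 0$. Your write-up is actually more explicit than the paper's about why the residual term disappears; aside from the stray false start in the middle paragraph, nothing is missing.
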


\begin{proof}
 Note that the image of ${\rm Frob}_q$  in $\Gamma$  of   (at least) one element  $q \in Q$  generates 
 the   subgroup of  $\Gamma$ generated by the ${\rm Frob}_q$'s for $q \in Q$.   We choose one such and call it $q'$. Thus if we have an element $\alpha=\sum_{q \in Q } a_q {\rm Frob}_q \in M_Q,a_q\in \Z_p,$ of degree 0, 
  we can rewrite $\alpha$ as 
$\sum_{q \in Q \backslash \{q'\}} (a_q {\rm Frob}_q-a_{q,q'} {\rm Frob}_{q'})$ for some $a_{q,q'} \in \Z_p$ such that the degree of $a_q {\rm Frob}_q-a_{q,q'} {\rm Frob}_{q'}$ is 0.  
\end{proof}


 We will need to consider in the applications more particular choices of the set $Q$. 
 
 \begin{prop}\label{cebotarev}
There is a finite  set of primes $Q=\{q\}$ of $F$ away from $p$   such that ${\rm Frob}_q$'s for $q \in Q$ topologically generate  $Y'_\infty/(\gamma-1)Y_\infty$.  For such $Q$, $M_Q$ equals $Y_\infty/(\gamma-1)Y_\infty$. We may further impose  that  the image  of the ${\rm Frob}_q$ in $\Gamma$ is a generator for all $q\in Q$.
\end{prop}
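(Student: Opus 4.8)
The plan is to produce $Q$ in two stages, first arranging that the Frobenius elements topologically generate $Y'_\infty/(\gamma-1)Y_\infty$, and then refining the choice so that each Frobenius also generates $\Gamma$ in the quotient. For the first stage, recall from the discussion preceding the proposition that $(\gamma-1)Y_\infty$ is the closed commutator subgroup of $Y'_\infty = \mathrm{Gal}(M_\infty/F)$, so $Y'_\infty/(\gamma-1)Y_\infty$ is the Galois group of the maximal abelian subextension $M_\infty^{\mathrm{ab}}/F$ of $M_\infty/F$, which is still unramified outside $p$. The key input is that $Y_\infty/(\gamma-1)Y_\infty$ is a finitely generated $\Z_p$-module (this is noted in \S\ref{Iwasawa}, being a consequence of $Y_\infty$ being a finitely generated $\Lambda$-module), hence so is $Y'_\infty/(\gamma-1)Y_\infty$ by the exact sequence (\ref{degree2}). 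A finitely generated pro-$p$ (indeed $\Z_p$-) module is topologically generated by finitely many elements; by the \v{C}ebotarev density theorem applied to the finite layers $\mathrm{Gal}(M_\infty^{\mathrm{ab}}/F)/(\text{open subgroup})$ — or directly, since any element of a $\Z_p$-module that generates modulo $p$ is part of a topological generating set — one can realize lifts of a chosen topological generating set as Frobenius elements $\mathrm{Frob}_q$ at suitable primes $q$ of $F$ not above $p$. This gives a finite set $Q$ with $\mathrm{Frob}_q$, $q\in Q$, topologically generating $Y'_\infty/(\gamma-1)Y_\infty$.

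Granting such $Q$, the identity $M_Q = Y_\infty/(\gamma-1)Y_\infty$ is essentially formal: $M'_Q$ is by definition the $\Z_p$-span of the $\mathrm{Frob}_q$, which is dense in $Y'_\infty/(\gamma-1)Y_\infty$, hence (being the continuous image of a compact, hence closed, finitely generated $\Z_p$-module) equal to all of it; then $M_Q$ is by definition the degree-$0$ part of $M'_Q$, i.e. the kernel of the degree map $Y'_\infty/(\gamma-1)Y_\infty \to \Z_p$, which by (\ref{degree2}) is exactly $Y_\infty/(\gamma-1)Y_\infty$. One small point to check here is that $M'_Q$ is genuinely closed: since $Y'_\infty/(\gamma-1)Y_\infty$ is a finitely generated $\Z_p$-module and the $\mathrm{Frob}_q$ span a dense submodule, Nakayama (topological form) forces the span to be everything, so closedness is automatic.

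For the last refinement — that one may take the image of each $\mathrm{Frob}_q$ in $\Gamma$ to be a topological generator — I would use that the decomposition condition at $q$ in the cyclotomic tower is mild: as already observed in the text, $\mathrm{deg}(\mathrm{Frob}_q)\neq 0$ for every $q$ away from $p$, so $\mathrm{Frob}_q$ always maps to a nonzero, hence finite-index, element of $\Gamma\cong\Z_p$; we want it to map to a \emph{generator}, i.e. a unit times $\gamma$. Given any $q_0$ produced above with, say, $\mathrm{Frob}_{q_0}\mapsto \gamma^{p^k u}$ ($u\in\Z_p^\times$), replace $q_0$ by a prime $q$ chosen in the same \v{C}ebotarev class modulo a deeper layer so that additionally $\mathrm{Frob}_q$ has the prescribed image $\gamma$ in $\mathrm{Gal}(F(\mu_{p^\infty})/F)/(\text{the subgroup cutting out }F_\infty)$ — concretely, impose on $q$ both the congruence placing $\mathrm{Frob}_q$ near the desired class in a fixed finite quotient of $Y'_\infty/(\gamma-1)Y_\infty$ and the congruence $N(q)\equiv$ (appropriate value) $\bmod p^m$ making $\chi(\mathrm{Frob}_q)$ a generator of $\Gamma$. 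These two conditions are independent (the second concerns a quotient of $\mathrm{Gal}(\overline{F}/F)$ through which $Y'_\infty/(\gamma-1)Y_\infty$ does \emph{not} factor in a way that constrains it), so \v{C}ebotarev furnishes infinitely many such $q$; doing this for each element of the original generating set yields the desired $Q$.

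I expect the main obstacle to be the independence claim in the last paragraph: one must verify that the constraint "$\mathrm{Frob}_q$ lies in a prescribed coset of an open subgroup of $Y'_\infty/(\gamma-1)Y_\infty$" and the constraint "$\chi(\mathrm{Frob}_q)$ generates $\Gamma$" can be imposed simultaneously, i.e. that the relevant fixed fields are almost linearly disjoint over $F$. This reduces to the fact that the cyclotomic character $\chi$ is surjective onto $\Gamma$ even after restriction to the Galois group of $M_\infty^{\mathrm{ab}}/F$ over any finite subfield — which holds because the degree map $Y'_\infty/(\gamma-1)Y_\infty\to\Z_p$ is surjective (sequence (\ref{degree2})) and is compatible with $\chi$ up to the action of $\Gamma$ on $Y_\infty$ — so one can prescribe the $\Gamma$-part of $\mathrm{Frob}_q$ freely relative to any finite amount of data in $Y_\infty/(\gamma-1)Y_\infty$. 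Everything else is a routine application of \v{C}ebotarev together with the finite generation statements already recorded in \S\ref{Iwasawa}.
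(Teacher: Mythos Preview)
Your stages 1 and 2 are correct and match the paper's argument: reduce to the finite quotient $(Y'_\infty/(\gamma-1)Y_\infty)/p$ via Burnside/Nakayama, then invoke \v{C}ebotarev; the identification $M_Q = Y_\infty/(\gamma-1)Y_\infty$ is exactly as you say.

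Stage 3, however, contains a genuine error. You claim that the two conditions ``$\mathrm{Frob}_q$ lies in a prescribed coset of an open subgroup of $Y'_\infty/(\gamma-1)Y_\infty$'' and ``$\chi(\mathrm{Frob}_q)$ generates $\Gamma$'' are independent, on the grounds that the second ``concerns a quotient of $\mathrm{Gal}(\overline{F}/F)$ through which $Y'_\infty/(\gamma-1)Y_\infty$ does not factor.'' But this is false: $F_\infty \subset M_\infty^{\mathrm{ab}}$, so $\Gamma$ \emph{is} a quotient of $Y'_\infty/(\gamma-1)Y_\infty$ --- indeed, the degree map of (\ref{degree2}) is precisely this quotient map. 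Consequently, once you fix $\mathrm{Frob}_q$ modulo any open subgroup $U$, you have already fixed its image in $\Gamma$ modulo the image of $U$, and you cannot adjust the $\Gamma$-part separately. Your attempted repair in the final paragraph (``$\chi$ is surjective onto $\Gamma$ even after restriction to the Galois group over any finite subfield'') is likewise false: restricted to the Galois group over a finite subfield $K$, the image of $\chi$ is only $p^n\Gamma$ where $p^n = [K\cap F_\infty : F]$.

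The paper sidesteps this entirely by imposing both conditions \emph{simultaneously at the mod-$p$ level}. The condition ``$\mathrm{Frob}_q$ generates $\Gamma$'' is equivalent to ``$q$ is inert in $F_\infty/F$,'' which in turn is equivalent to ``$\mathrm{Frob}_q$ has nonzero image in $\Gamma/p$.'' So one simply chooses, inside the finite group $(Y'_\infty/(\gamma-1)Y_\infty)/p$, a generating set each of whose elements maps nontrivially to the quotient $\Gamma/p \cong \Z/p$; this is an elementary fact about finite abelian $p$-groups (for instance, add a fixed element of nonzero $\Gamma/p$-image to any generator that maps to zero). \v{C}ebotarev then realizes these elements as $\mathrm{Frob}_q$, and Burnside gives topological generation of the whole group. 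No two-stage adjustment is needed.
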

\begin{proof}
It is enough to choose a finite set of $q$'s so that the  ${\rm Frob}_q$'s generate the (finite extension given by the)  maximal abelian $(p,\cdots,p)$ extension of $F$ that is unramified outside $p$, and such that $q$ is inert  in $F_\infty/F$.  By Burnside's theorem such ${\rm Frob}_q$'s generate $Y'_\infty/(\gamma-1)Y_\infty$.  
The $\Z_p$-module $M_Q$  of degree 0 is by our choice  all of   $Y_\infty/(\gamma-1)Y_\infty$, the degree 0 submodule of $Y'_\infty/(\gamma-1)Y_\infty$.
\end{proof}

In the next lemma, we consider compact groups  $M$ with a continuous action 
of $\Gamma$ that comes from a structure of $\Lambda$-module of finite type on $M$,
and the topology on $M$ is the $\mathfrak{m}_\Lambda $-topology, where 
$\mathfrak{m}_\Lambda$ is the maximal ideal of $\Lambda$.
Equivalently, $M$ is the projective limit of  a projective system of finite p-groups $M_n$
with compatible actions of $\Gamma/p^n \Gamma$ and $M/\mathfrak{m}_\Lambda M$ 
is finite.

\begin{definition}
 For such  a continuous $\Gamma$-module $M$, we define $H^1(\Gamma,M)$ by $M/(\gamma-1)M$ for $\gamma$ any topological generator of $\Gamma$.  It is independent of choice of the generator $\gamma$.
 \end{definition}
 
\it Remark. \rm $H^1(\Gamma,M)$ is also the continuous $H^1$.

 We have the following lemma :
 
 \begin{lemma}\label{generality}
  From an exact sequence of $\Gamma$-modules $$0 \rightarrow M_1 \rightarrow M \rightarrow M_2 \rightarrow 0$$ we get an exact sequence of abelian groups  $$0 \rightarrow M_1^{\Gamma} \rightarrow M^\Gamma \rightarrow M_2^\Gamma \rightarrow H^1(\Gamma,M_1).$$ 
 \end{lemma}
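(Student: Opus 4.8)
The plan is to read off the sequence from the snake lemma applied to the endomorphism ``multiplication by $\gamma-1$'' on the given short exact sequence. First I would recall that, since $\Gamma$ is topologically generated by $\gamma$, for every module $N$ in the class under consideration one has $N^\Gamma=\ker(\gamma-1\colon N\to N)$, and, by the Definition just made, $H^1(\Gamma,N)=N/(\gamma-1)N=\operatorname{coker}(\gamma-1\colon N\to N)$ (independently of $\gamma$, and agreeing with continuous $H^1$ by the preceding Remark). Because the maps $M_1\to M\to M_2$ are $\Gamma$-equivariant, $\gamma-1$ induces a morphism of the given short exact sequence of abelian groups to itself, i.e. a commutative diagram
$$\begin{array}{ccccccccc}
0 & \to & M_1 & \to & M & \to & M_2 & \to & 0 \\
{} & {} & \downarrow & {} & \downarrow & {} & \downarrow & {} & {} \\
0 & \to & M_1 & \to & M & \to & M_2 & \to & 0
\end{array}$$
in which each vertical arrow is $\gamma-1$ and both rows are exact (exactness of a sequence of $\Gamma$-modules meaning exactness as abelian groups).

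Next I would invoke the snake lemma for this diagram. Its kernel row is $0\to M_1^\Gamma\to M^\Gamma\to M_2^\Gamma$, its cokernel row is $M_1/(\gamma-1)M_1\to M/(\gamma-1)M\to M_2/(\gamma-1)M_2\to 0$, and the connecting map glues them into the long exact sequence
$$0 \to M_1^\Gamma \to M^\Gamma \to M_2^\Gamma \xrightarrow{\ \delta\ } H^1(\Gamma,M_1) \to H^1(\Gamma,M) \to H^1(\Gamma,M_2) \to 0 ,$$
and truncating after $H^1(\Gamma,M_1)$ yields exactly the asserted four-term sequence. If one wishes to avoid citing the snake lemma, the only point with any content is the construction of $\delta$ and its well-definedness: for $\bar m\in M_2^\Gamma$, choose a lift $m\in M$; then $(\gamma-1)m$ maps to $(\gamma-1)\bar m=0$ in $M_2$, so it lies in $M_1$, and $\delta(\bar m)$ is its class in $M_1/(\gamma-1)M_1$; changing the lift of $\bar m$ alters $(\gamma-1)m$ by an element of $(\gamma-1)M_1$, so $\delta$ is well defined, and it is visibly additive. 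Exactness at $M_2^\Gamma$ then reduces to the observation that $\bar m$ lifts to an element of $M^\Gamma$ iff a lift $m$ can be chosen with $(\gamma-1)m=0$ iff $\delta(\bar m)=0$; exactness at $M_1^\Gamma$ and at $M^\Gamma$ is immediate from the exactness and $\Gamma$-equivariance of the original sequence.

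I do not expect a genuine obstacle: the statement is pure homological algebra for the procyclic group $\Gamma$, and the particular class of modules (finite type $\Lambda$-modules with the $\mathfrak{m}_\Lambda$-topology) is irrelevant, since the lemma is asserted only at the level of abelian groups, so no continuity or closedness issue enters. The two things to keep an eye on are merely bookkeeping: that $\gamma-1$ really commutes with the structure maps (immediate from $\Gamma$-equivariance) and that the ``$H^1$'' in the conclusion is the one fixed by the Definition above, namely $M_1/(\gamma-1)M_1$, so that the cokernel produced by the snake lemma is literally it.
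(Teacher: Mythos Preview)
Your proof is correct and takes a genuinely different, more elementary route than the paper's. The paper appeals to the long exact sequence of continuous group cohomology, and therefore has to justify that the short exact sequence admits a continuous set-theoretic section $M_2\to M$; it does this by observing that compactness forces the subspace topology on $M_1$ to agree with its own $\mathfrak m_\Lambda$-adic topology, and then running Mittag-Leffler on the finite-level sections $M/\mathfrak m^n M\to M_2/\mathfrak m^n M_2$. Your approach bypasses all of that: because $\Gamma$ is procyclic and the paper has \emph{defined} $H^1(\Gamma,-)$ as the cokernel of $\gamma-1$, the snake lemma for the endomorphism $\gamma-1$ of the given short exact sequence of abelian groups produces the four-term sequence directly. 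The only place topology is used is the harmless identification $N^\Gamma=\ker(\gamma-1)$, which you note. What the paper's approach buys is that it situates the lemma in the general framework of continuous cohomology (useful if one later needs higher $H^i$ or non-procyclic $\Gamma$); what your approach buys is that it is shorter and avoids the Mittag-Leffler digression entirely.
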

 
\begin{proof} It follows from the exact sequence of continuous cohomology
groups. This exact sequence is available as by  compactness the topology
of $M_1$ is induced by the topology of $M$ and the map $M\rightarrow M_2$
has a continuous section (in the category of sets). To get a continuous section apply Mittag-Leffler to 
the projective system of sections of $M/\mathfrak{m}^nM\rightarrow 
M_2 /\mathfrak{m}^nM_2$. \end{proof}
 
For $M$ as above, we denote by $H^1(\Gamma, M\otimes \Qp)$ the 
continuous cohomology where  $M\otimes \Qp$ carries the group topology
that induces on $M/\{ p^\infty-\mathrm{torsion}\}$ its topology. By compactness of $\Lambda$ and flatness
of $\Qp$ over $\Zp$, 
it is isomorphic to  $H^1(\Gamma, M)\otimes \Qp$.  

\begin{prop}\label{equivalent}
Leopoldt's conjecture is equivalent to the finiteness of $H^1(\Gamma,Y_\infty)=Y_\infty/(\gamma-1)Y_\infty$.  Leopoldt's conjecture is  also equivalent to the vanishing of $H^1(\Gamma,Y_\infty\otimes \Q_p)$.

\end{prop}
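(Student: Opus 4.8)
The plan is to identify $Y_\infty/(\gamma-1)Y_\infty = H^1(\Gamma, Y_\infty)$ with a Galois group attached to $F$, and then recognize its $\Z_p$-rank as the Leopoldt defect $\delta_{F,p}$. First I would recall that $Y_\infty' = \mathrm{Gal}(M_\infty/F)$ and, by the observation at the start of \S\ref{degree}, that $(\gamma-1)Y_\infty$ is the closed commutator subgroup of $Y_\infty'$. Hence $Y_\infty'/(\gamma-1)Y_\infty$ is the maximal abelian quotient of $\mathrm{Gal}(M_\infty/F)$, i.e. the Galois group of the maximal abelian $p$-extension $M$ of $F$ unramified outside $p$, and from the exact sequence (\ref{degree2}) its degree-$0$ part $Y_\infty/(\gamma-1)Y_\infty$ is the Galois group of $M$ over $F_\infty$. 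By class field theory this object has $\Z_p$-rank $1 + \delta_{F,p}$ for $Y_\infty'/(\gamma-1)Y_\infty$ (the definition of $\delta_{F,p}$ recalled in the introduction), so $Y_\infty/(\gamma-1)Y_\infty$ has $\Z_p$-rank $\delta_{F,p}$. Since this module is a finitely generated $\Z_p$-module (noted in \S\ref{Iwasawa}: $Y_\infty/(\gamma-1)Y_\infty$ is finitely generated over $\Z_p$ because $Y_\infty$ is finitely generated over $\Lambda$), it is finite if and only if its $\Z_p$-rank is $0$, i.e. if and only if $\delta_{F,p}=0$, which is Leopoldt's conjecture (Conjecture \ref{Leopoldt}, in the form recalled there).

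For the second equivalence I would use that $H^1(\Gamma, Y_\infty \otimes \Q_p) \cong H^1(\Gamma, Y_\infty) \otimes \Q_p = (Y_\infty/(\gamma-1)Y_\infty)\otimes \Q_p$, as stated in the paragraph preceding the proposition. This $\Q_p$-vector space has dimension equal to the $\Z_p$-rank of $Y_\infty/(\gamma-1)Y_\infty$, namely $\delta_{F,p}$; it vanishes precisely when $\delta_{F,p}=0$. Thus both assertions reduce to the same numerical statement $\delta_{F,p}=0$.

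There is a subtlety I would be careful about: the passage from the norm-compatible Iwasawa module $Y_\infty$ to the ``bottom layer'' Galois group of $F$. One must check that $Y_\infty/(\gamma-1)Y_\infty$ is genuinely the Galois group over $F_\infty$ of the maximal abelian $p$-extension of $F$ unramified outside $p$ — equivalently, that no extra $\Z_p$-corank is lost or gained when descending the tower. This is exactly where the fact that $Y_\infty$ has \emph{no nonzero finite $\Lambda$-submodule} (recalled in \S\ref{Iwasawa}, via Washington 15.36 / NSW 11.4.4) enters, guaranteeing that $H^0(\Gamma, Y_\infty^\vee)$-type obstructions (the $\Gamma$-coinvariants versus invariants discrepancy measured by Lemma \ref{generality}) do not disturb the rank count; combined with the weak Leopoldt conjecture (which Iwasawa proved, and which underlies the torsionness of $Y_\infty$) this pins the corank down. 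I expect this compatibility — reconciling the inverse limit $Y_\infty$ with the arithmetic of $F$ itself — to be the only real point requiring care; once it is in place, the two equivalences follow immediately from the definition of the Leopoldt defect and from $H^1(\Gamma, Y_\infty\otimes\Q_p) = H^1(\Gamma,Y_\infty)\otimes\Q_p$.
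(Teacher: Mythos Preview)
Your argument is correct and follows exactly the paper's (one-line) proof: identify $Y_\infty'/(\gamma-1)Y_\infty$ with the Galois group of the maximal abelian $p$-extension of $F$ unramified outside $p$, and read off the $\Z_p$-rank from the degree sequence~(\ref{degree2}). Your final paragraph is unnecessary, though: the identification is purely group-theoretic (abelianization of $Y_\infty'$), so the no-finite-$\Lambda$-submodule property of $Y_\infty$ plays no role here.
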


\begin{proof}
 Observe that $Y_\infty'/(\gamma-1)Y_\infty$ is the Galois group of the maximal abelian $p$ extension of $F$ unramified outside $p$.
\end{proof}

Thus,  via Proposition \ref{equivalent},  Leopoldt's conjecture is equivalent to  the finiteness of  $M_Q$'s of the proposition.

For later use we note:

\begin{cor}\label{needed}
 The $M_Q$'s for $Q=\{q_1,q_2\}$'s that are inert in $F_\infty/F$ span the finitely generated 
 $\Z_p$-module $Y_\infty/(\gamma-1)Y_\infty$.
\end{cor}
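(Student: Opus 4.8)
The plan is to read this off directly from Proposition~\ref{cebotarev} and Lemma~\ref{trivial}: the corollary is essentially the assertion that the single ``good'' set $Q$ produced by Proposition~\ref{cebotarev} can be broken up into two-element pieces without leaving the class of sets inert in $F_\infty/F$. First I would apply Proposition~\ref{cebotarev} to fix a finite set $S=\{q\}$ of primes of $F$ away from $p$, each of which is inert in $F_\infty/F$ --- equivalently, for each of which the image of ${\rm Frob}_q$ in $\Gamma$ is a topological generator --- and such that $M_S=Y_\infty/(\gamma-1)Y_\infty$; both the inertness and the equality $M_S=Y_\infty/(\gamma-1)Y_\infty$ are guaranteed by the statement of Proposition~\ref{cebotarev}.

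Next, because \emph{every} ${\rm Frob}_q$ with $q\in S$ maps to a generator of $\Gamma$, the compatibility hypothesis in Lemma~\ref{trivial} --- that some distinguished $q'\in S$ has ${\rm Frob}_{q'}$ generating, in $\Gamma$, a subgroup containing those generated by all the other ${\rm Frob}_q$ --- holds for an arbitrary choice of $q'\in S$. Hence Lemma~\ref{trivial} gives that $M_S$ is the $\Z_p$-span of the degree-$0$ submodules $M_{q,q'}$ generated by ${\rm Frob}_q,{\rm Frob}_{q'}$, as $q,q'$ range over $S$. Then I would observe that, unwinding the definitions, $M_{q,q'}$ is nothing but the Frobenius module $M_{\{q,q'\}}$ attached to the two-element set $Q=\{q,q'\}$ --- it is by definition the degree-$0$ part of the $\Z_p$-submodule of $Y'_\infty/(\gamma-1)Y_\infty$ generated by ${\rm Frob}_q$ and ${\rm Frob}_{q'}$ --- and that $\{q,q'\}$ is a set of two primes inert in $F_\infty/F$.

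Combining these steps,
\[
Y_\infty/(\gamma-1)Y_\infty \;=\; M_S \;=\; \sum_{q,q'\in S} M_{\{q,q'\}} \;\subseteq\; \sum_{\{q_1,q_2\}\ \text{inert in}\ F_\infty/F} M_{\{q_1,q_2\}} \;\subseteq\; Y_\infty/(\gamma-1)Y_\infty,
\]
so all containments are equalities; that $Y_\infty/(\gamma-1)Y_\infty$ is finitely generated over $\Z_p$ was recalled in \S\ref{Iwasawa}. There is no serious obstacle here: the only points needing attention are checking that Proposition~\ref{cebotarev} indeed delivers a set \emph{all} of whose Frobenius elements generate $\Gamma$ (so that Lemma~\ref{trivial} applies with any base point $q'$, and so that every resulting pair is inert), and the purely formal verification that the two-generator module $M_{q,q'}$ of Lemma~\ref{trivial} coincides with $M_Q$ for $Q=\{q,q'\}$ --- a matter of matching definitions, since $M_{q,q'}$ depends only on $q,q'$ and not on the ambient set $S$.
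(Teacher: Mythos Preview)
Your proposal is correct and is exactly the argument the paper has in mind: the corollary is stated without proof as an immediate consequence of Proposition~\ref{cebotarev} (which supplies a finite set $S$ of primes all inert in $F_\infty/F$ with $M_S=Y_\infty/(\gamma-1)Y_\infty$) together with Lemma~\ref{trivial} (which decomposes $M_S$ into the two-generator pieces $M_{q,q'}=M_{\{q,q'\}}$). There is nothing to add.
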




\section{Reciprocity and splitting  conjectures}\label{odd}

We now consider a finite set of primes $Q$ of $F$ away from $p$ 
and such that the image of  ${\rm Frob}_q$ for $q \in Q$ generates  $\Gamma$.  We let $m$ be the cardinality of $Q$.
 For each $n$ consider the  Sylow $p$-subgroup of the minus part of the ray class group of conductor $Q_n$, the ideal generated by the product of the primes above $\{q\}$ of $\f_n$. We denote this by  $\EA_{n,Q}^-$.

\begin{definition}\label{kay} 
  Let $\kay_{n,Q}^-$ denote the subgroup of the Sylow $p$-subgroup  of 
$(\Oc_{\f_n}/Q_n)^{*}$ on which complex conjugation $\in {\rm Gal}(\f_n/F)$ acts by $-1$, modulo the image of the $p$-power roots of unity $\mu_{p^{n+t}}$ of $\f_n$.  
\end{definition}

\begin{lemma}\label{kay1}
The group $\kay_{n,Q}^-$  is isomorphic as a ${\rm Gal}(\f_n/F)$-module  to $(\mu_{p^{n+t}})^{m}$ modulo the diagonally embedded $\mu_{p^{n+t}}$.

\end{lemma}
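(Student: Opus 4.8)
The plan is to make the group $\kay_{n,Q}^-$ completely explicit via the Chinese Remainder Theorem and reduction of roots of unity. Write $G={\rm Gal}(\f_n/F)$; under our running hypotheses $G\cong\Gamma/\Gamma^{p^n}\times\langle c\rangle$, with $c$ complex conjugation and $\Gamma/\Gamma^{p^n}$ generated by the image of $\gamma$. Let $P$ be the set of primes of $\f_n$ above $Q$. The hypothesis on $Q$ --- that the image of ${\rm Frob}_q$ in $\Gamma$ generates $\Gamma$ for each $q\in Q$ --- says exactly that the decomposition group $D_q\subseteq G$ contains the cyclic factor $\Gamma/\Gamma^{p^n}$. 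I will use this in three ways: (i) $\gamma$ fixes every prime in $P$; (ii) the primes of $\f_n$ above a fixed $q$ form a single $\langle c\rangle$-orbit (of size one or two, according as $c\in D_q$ or not), so the $c$-orbits in $P$ are in natural bijection with $Q$; (iii) $q$ is totally inert in $F_\infty/F$.

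Next I would pin down the local factors. Fix $w\in P$ with $w\mid q$. Since $\mu_{p^{n+t}}\subset\f_n$, the residue field $\kappa(w)$ contains $\mu_{p^{n+t}}$, and I claim it contains no larger group of $p$-power roots of unity. Indeed, by (iii) $q$ is inert in $F_{n+1}/F_n$, and since $\f_{n+1}=\f_n F_{n+1}$ with $\f_n$ and $F_{n+1}$ linearly disjoint over $F_n$, the prime $w$ is non-split in $\f_{n+1}/\f_n=\f_n(\mu_{p^{n+1+t}})/\f_n$; if $\mu_{p^{n+1+t}}$ were contained in $\kappa(w)$ then the Frobenius at $w$ would act trivially on $\mu_{p^{n+1+t}}$ and $w$ would split completely in $\f_{n+1}/\f_n$, a contradiction. (Equivalently: $v_p(N(w)-1)=n+t$ exactly, which one sees from $N(w)=N(q)^{|D_q|}$, $|D_q|\in\{p^n,2p^n\}$, and $N(q)\equiv\pm\chi(\gamma)^a\pmod{p^M}$ for all $M$, with $p\nmid a$ and $\chi(\gamma)$ a topological generator of $1+p^t\Z_p$.) Hence the $p$-Sylow subgroup of $(\Oc_{\f_n}/w)^*$ is precisely $\mu_{p^{n+t}}(\kappa(w))$, and reduction modulo $w$ is a $D_w$-equivariant isomorphism $\mu_{p^{n+t}}(\f_n)\xrightarrow{\sim}\mu_{p^{n+t}}(\kappa(w))$.

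Now I assemble. The Chinese Remainder Theorem applied to the $G$-stable ideal $Q_n$, together with these reduction isomorphisms, identifies the $p$-Sylow subgroup of $(\Oc_{\f_n}/Q_n)^*$ with $\mu_{p^{n+t}}(\f_n)\otimes_{\Z_p}\Z_p[P]$, where $G$ acts diagonally through its natural action on $\mu_{p^{n+t}}$ and its permutation action on $P$; under this identification the diagonal reduction map $\mu_{p^{n+t}}(\f_n)\hookrightarrow(\Oc_{\f_n}/Q_n)^*$ becomes $\zeta\mapsto\zeta\otimes\sum_{w\in P}e_w$. As $p$ is odd, $c$ acts by $-1$ on $\mu_{p^{n+t}}$ and permutes $P$, so the minus part is $\mu_{p^{n+t}}(\f_n)\otimes_{\Z_p}\Z_p[P]^{\langle c\rangle}$, and $\Z_p[P]^{\langle c\rangle}$ is free of rank $m$ over $\Z_p$ on the orbit sums $\sum_{w\in O}e_w$. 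By (i), $\gamma$ acts trivially on $\Z_p[P]$; hence this minus part is $(\mu_{p^{n+t}})^m$ as a $G$-module, with each factor carrying the standard (cyclotomic) action, and the image of $\mu_{p^{n+t}}(\f_n)$ is the diagonal copy. Quotienting gives the asserted ${\rm Gal}(\f_n/F)$-isomorphism $\kay_{n,Q}^-\cong(\mu_{p^{n+t}})^m/\mu_{p^{n+t}}$.

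The one genuinely arithmetic step --- the main thing to get right --- is the assertion that $\kappa(w)$ contains no $p$-power roots of unity beyond $\mu_{p^{n+t}}$, i.e.\ $v_p(N(w)-1)=n+t$ exactly. This is precisely where the hypothesis on the ${\rm Frob}_q$ is used (and it is also what forces the $\gamma$- and $c$-orbit structure on $P$ recorded in (i)--(iii)); everything else is bookkeeping with the Chinese Remainder decomposition and the reduction maps.
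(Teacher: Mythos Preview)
Your argument is correct and follows essentially the same route as the paper: identify the $p$-Sylow of each local factor $(\Oc_{\f_n}/w)^*$ with $\mu_{p^{n+t}}$ via reduction (using that each $q\in Q$ is inert in $F_\infty/F$), and then read off the Galois action. The paper does this in two sentences, noting separately that the reduction isomorphism is ${\rm Gal}(\f_n/F)$-equivariant when $q$ is inert in $\f$ and only ${\rm Gal}(\f_n/\f)$-equivariant when $q$ splits in $\f$; your packaging via $\mu_{p^{n+t}}\otimes_{\Z_p}\Z_p[P]$ with the diagonal $G$-action handles both cases uniformly and makes the passage to the minus part transparent.
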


\begin{proof} 
If $q_n$ is a prime of $\f_n$ above $q$, 
the $p$-Sylow  of the multiplicative group $(k_{q_n})^*$ of the residue
field of $q_n$ is isomorphic by the reduction map  modulo $q_n$ to  $\mu_{p^{n+t}}$.
This follows from the fact that the primes in $Q$ are inert in $F_\infty/F$
and that $\mu_{p^{n+t}}$  is the  Sylow $p$-subgroup of the torsion subgroup of $\f_n^*$.
To conclude, note that these isomorphisms are compatible with the action of $\mathrm{Gal} (\f_n /F)$ if $q$ is inert in $\f$, and with the action of  $\mathrm{Gal} (\f_n  /\mathcal{F})$
if $q$ split in $\f$. 
\end{proof}

\begin{lemma}\label{injectivity}

We have the exact  sequence for each $n \geq 0$:
\begin{equation}\label{rub}
 0 \rightarrow  \kay_{n,Q} ^-\rightarrow \EA_{n,Q}^- \rightarrow \EA_n^- \rightarrow 0 
\end{equation}

We have the following commutative diagram where the vertical maps are induced by the inclusion  maps $\f_n \hookrightarrow \f_{n+1}$:

\ \ \ \ \ \ \ \ \ \  \ \ \ \ \xymatrix{
0 \ar[r]    &\kay_{n,Q} ^-    \ar[d] \ar[r]  &\EA_{n,Q}^- 
 \ar[d] \ar[r]           &\EA_{n}^- \ar[d] \ar[r]  &0\\
 0 \ar[r]   &\kay_{n+1,Q} ^-    \ar[r]  &\EA_{n+1,Q}^- 
   \ar[r]           &\EA_{n+1}^-  \ar[r]  &0\\
}

We also have the following commutative diagram where the vertical maps are induced by the norm maps $\f_{n+1} \rightarrow \f_n$:

\ \ \ \ \ \ \ \ \ \  \ \ \ \ \xymatrix{
0 \ar[r]    &\kay_{n+1,Q} ^-    \ar[d] \ar[r]  &\EA_{n+1,Q}^- 
   \ar[d] \ar[r]           &\EA_{n+1}^- \ar[d] \ar[r]  &0\\
 0 \ar[r]   &\kay_{n,Q} ^-    \ar[r]  &\EA_{n,Q}^- 
   \ar[r]           &\EA_{n}^-  \ar[r]  &0\\
}

All the vertical maps  in the first diagram are injective, and the first vertical map
of the second commutative diagram is surjective.
\end{lemma}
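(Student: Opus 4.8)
The plan is to deduce every assertion from the classical exact sequence relating the ray class group of conductor $Q_n$ to the ordinary class group of $\f_n$, combined with the exactness of the two functors ``$p$-Sylow subgroup'' and ``minus eigenspace of complex conjugation $c\in\mathrm{Gal}(\f_n/F)$'' on finite abelian groups (the second being exact because $p$ is odd).

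First I would prove (\ref{rub}). For the modulus $Q_n$ (an ideal, with no archimedean part) the standard sequence reads
$$1 \longrightarrow (\Oc_{\f_n}/Q_n)^*/\overline{\Oc_{\f_n}^*} \longrightarrow \mathrm{Cl}_{Q_n}(\f_n) \longrightarrow \mathrm{Cl}(\f_n) \longrightarrow 1,$$
where $\overline{\Oc_{\f_n}^*}$ is the image of the global units. Applying the two exact functors above turns this into an exact sequence with middle term $\EA_{n,Q}^-$ and right term $\EA_n^-$; it remains to identify the left term with $\kay_{n,Q}^-$. Here I would use that $\f_n$ is a CM field with maximal totally real subfield $F_n$: by Dirichlet, $\Oc_{\f_n}^*=\mu(\f_n)\cdot\Oc_{F_n}^*$ up to a subgroup of index dividing $2$, so on the minus part the $p$-Sylow of the image of $\Oc_{\f_n}^*$ in $(\Oc_{\f_n}/Q_n)^*$ is exactly the image of the $p$-power roots of unity $\mu_{p^{n+t}}$ (the prime-to-$p$ roots of unity and the totally real units contribute nothing to the minus $p$-part) --- precisely the subgroup divided out in Definition \ref{kay}.

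Next I would handle the two diagrams. Since $q$ is inert all the way up $F_\infty/F$ and $\f_{n+1}=\f_n F_{n+1}$, each prime $q_n$ of $\f_n$ over $q$ stays inert in $\f_{n+1}/\f_n$, so $Q_{n+1}=Q_n\Oc_{\f_{n+1}}$. Hence the inclusion $\f_n\hookrightarrow\f_{n+1}$ and the norm $N_{\f_{n+1}/\f_n}$ both induce maps on the ray class groups of the respective conductors and on the groups $(\Oc/Q)^*$, and the two diagrams commute by functoriality of extension of ideals, of the norm, and of the ``forget the conductor'' surjections. Under the identification of Lemma \ref{kay1}, the maps they induce on $\kay^-$ are, on each of the $m$ factors, the inclusion $k_{q_n}^*\hookrightarrow k_{q_{n+1}}^*$, resp. the norm $N_{k_{q_{n+1}}/k_{q_n}}$, of residue fields, compatibly with the diagonal copies of $\mu_{p^{n+t}}$.

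Finally I would prove the injectivity and surjectivity statements. Injectivity of $\kay_{n,Q}^-\to\kay_{n+1,Q}^-$ is then immediate: it is the evident injection $(\mu_{p^{n+t}})^m/\Delta\hookrightarrow(\mu_{p^{n+1+t}})^m/\Delta$. For $\EA_n^-\hookrightarrow\EA_{n+1}^-$, put $G=\mathrm{Gal}(\f_{n+1}/\f_n)$; a diagram chase through $1\to\Oc_{\f_{n+1}}^*\to\f_{n+1}^*\to P_{\f_{n+1}}\to1$ and $1\to P_{\f_{n+1}}\to I_{\f_{n+1}}\to\mathrm{Cl}(\f_{n+1})\to1$ (using Hilbert 90 and $H^1(G,I_{\f_{n+1}})=0$, the latter by Shapiro) embeds the capitulation kernel $\ker(\mathrm{Cl}(\f_n)\to\mathrm{Cl}(\f_{n+1}))$, $c$-equivariantly, into $H^1(G,\Oc_{\f_{n+1}}^*)$; on the minus $p$-part the target reduces, again via the CM structure, to $H^1(\Z/p,\mu_{p^{n+1+t}})$, which vanishes because $p$ is odd (a short computation, $G$ acting through the cyclotomic character). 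Hence $\EA_n^-\to\EA_{n+1}^-$ is injective --- alternatively one could cite the standard fact that $\X_\infty^-$ has no nonzero finite $\Lambda$-submodule (\S\ref{Iwasawa}) together with the finiteness of all $\EA_m^-$, or \cite{Washington}, \S13. Injectivity of the middle map $\EA_{n,Q}^-\to\EA_{n+1,Q}^-$ then follows from the snake lemma applied to the two rows of the first diagram. For the surjectivity in the second diagram, the map $\kay_{n+1,Q}^-\to\kay_{n,Q}^-$ is, on each factor, the norm $N_{k_{q_{n+1}}/k_{q_n}}$ of multiplicative groups of finite fields, which is onto; a surjection of finite abelian groups is onto on $p$-Sylow subgroups, so $(\mu_{p^{n+1+t}})^m/\Delta\twoheadrightarrow(\mu_{p^{n+t}})^m/\Delta$, as required. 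The two steps I expect to require genuine care are the identification of the unit image on the minus $p$-part in the first step, and the injectivity of $\EA_n^-\to\EA_{n+1}^-$, which is standard but not purely formal.
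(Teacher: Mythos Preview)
Your proof is correct and follows essentially the same route as the paper's: the standard ray class group sequence, the CM fact that $\epsilon/\bar\epsilon$ is a root of unity (equivalently, your unit--index statement) to identify the left term, injectivity of $\EA_n^-\to\EA_{n+1}^-$ (the paper simply cites \cite{Washington}, Prop.~13.26, whereas you supply the Hilbert~90 / $H^1(\Z/p,\mu_{p^{n+1+t}})=0$ argument yourself), the five-lemma for the middle map, and surjectivity of norms on multiplicative groups of finite fields for the last claim. The only substantive addition is your self-contained computation replacing the Washington citation; otherwise the arguments coincide.
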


\begin{proof}

The horizontal exact sequences follow from:

--If ${\rm Cl}_{\f_n}$ and ${\rm Cl}_{\f_n,Q_n}$ denote the ray class group of conductor  1 and $Q_n$ of $\f_n$ respectively then we have an exact sequence
$$0 \rightarrow {(\Oc_{\f_n}/Q_n)^*}/\bar{E}_{\f_n} \rightarrow {\rm Cl}_{\f_n,Q_n} \rightarrow {\rm Cl}_{\f_n} \rightarrow 0$$ with $\bar{E}_{\f_n}$ the image of the global units $\Oc_{\f_n}^*$.

-- For $\epsilon\in \Oc_{\f_n}^*$, $\epsilon/\bar{\epsilon}$ is a root   of unity of $\f_n$.

-- $p>2$.

The commutativity of the diagrams is obvious.

Proposition 13.26 of \cite{Washington} proves  the injectivity of $\EA_{n}^- \rightarrow \EA_{n+1}^-$. The injectivity of the map  $\kay_{n,Q}^- \rightarrow \kay_{n+1,Q}^-$ follows by inspection. This in turn yields the injectivity of $\EA_{n,Q}^- \rightarrow \EA_{n+1,Q}^-$.

 Note that the norm map $\kay_{n+1,Q} ^- \rightarrow \kay_{n,Q} ^-$ is surjective as norm maps induce surjective maps between multiplicative groups of  finite extensions of finite fields.  
  
\end{proof}

\begin{cor}\label{added} Consider the exact sequence (\ref{rub}).

\begin{enumerate}

\item Taking direct limits of the  exact sequence (\ref{rub}) as $n$ varies, we get an exact sequence 
of discrete $\Lambda$-modules:
\begin{equation}\label{exact1}
0 \rightarrow \Q_p/\Z_p(1)^{m-1} \rightarrow  \EA_{\infty,Q}^- \rightarrow \EA_\infty^- \rightarrow 0.
\end{equation}

Note  further that as  the first non-zero term of the sequence (\ref{exact1})  is divisible, we have a $\Z_p$-linear section
$f:\EA_\infty^- \rightarrow \EA_{\infty,Q}^-$.  

\item Taking inverse limits of terms of the exact sequence (\ref{rub})  with respect to norm maps we   get the exact sequence  of  compact $\Gamma$-modules:
\begin{equation}
0 \rightarrow {\rm lim}_{\leftarrow}\kay_{n,Q}^- \simeq \Z_p(1)^{m-1} \rightarrow  {\rm lim}_{\leftarrow} \EA_{n, Q}^- \rightarrow  {\rm lim}_{\leftarrow} \EA_{n}^- \rightarrow 0.
\end{equation} which by class field theory is isomorphic to the exact sequence
$$ 0 \rightarrow I_Q  \rightarrow {\rm Gal}(\el_{\infty,Q}^-/\f_\infty) \rightarrow  {\rm Gal}(\el_{\infty}^-/\f_\infty) \rightarrow 0,
 $$ with  $\el_{\infty,Q}^-$ the maximal abelian  odd $p$-extension of $\f_\infty$ that is unramified outside the places above $Q$, and  $I_Q$    the subgroup of    ${\rm Gal}(\el_{\infty,Q}^-/\f_\infty) $
 generated by the inertia groups at the primes above $Q$ of $\f_\infty$. We set $\X_{\infty,Q}^-={\rm Gal}(\el_{\infty,Q}^-/\f_\infty)$ thus obtaining the exact sequence of $\Lambda$-modules
 \begin{equation}\label{exact} 0 \rightarrow \Z_p(1)^{m-1}  \rightarrow \X_{\infty,Q}^- \rightarrow  \X_\infty^- \rightarrow 0.\end{equation}
 
\end{enumerate}
\end{cor}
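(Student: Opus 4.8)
The plan is to deduce both statements by applying the appropriate exact--sequence--limit machinery to (\ref{rub}), using the explicit shape of $\kay_{n,Q}^-$ from Lemma \ref{kay1} and the properties of the transition maps from Lemma \ref{injectivity}. For part (1), the point is that filtered direct limits are exact: applying ${\rm lim}_{\to}$ to (\ref{rub}) along the inclusion-induced maps (injective, by Lemma \ref{injectivity}) gives at once a short exact sequence of discrete $\Lambda$-modules with leftmost term ${\rm lim}_{\to}\kay_{n,Q}^-$. Under the identification of Lemma \ref{kay1} these transition maps are coordinatewise the natural inclusions $\mu_{p^{n+t}}\hookrightarrow\mu_{p^{n+1+t}}$, so the leftmost term is $(\mu_{p^\infty})^m$ modulo its diagonal $\mu_{p^\infty}$; the isomorphism $(v_1,\dots,v_m)\mapsto(v_1-v_m,\dots,v_{m-1}-v_m)$ presents this as $\Q_p/\Z_p(1)^{m-1}$, which is (\ref{exact1}). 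The section $f$ then exists because $\Q_p/\Z_p(1)^{m-1}$ is divisible, hence $\Z_p$-injective, so the inclusion into $\EA_{\infty,Q}^-$ splits $\Z_p$-linearly.

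For part (2) I would first record that the $\kay_{n,Q}^-$ are finite, so the system is Mittag--Leffler (this is also visible from the surjectivity of the norm maps on $\kay_{n,Q}^-$ in Lemma \ref{injectivity}), hence ${\rm lim}^1_{\leftarrow}\kay_{n,Q}^-=0$ and ${\rm lim}_{\leftarrow}$ applied to (\ref{rub}) along the norm maps is exact, yielding the displayed sequence of compact $\Gamma$-modules. To compute ${\rm lim}_{\leftarrow}\kay_{n,Q}^-$ I would check that a degree-$p$ step of the tower induces on the $p$-part of a residue field the norm $x\mapsto x^{(N^p-1)/(N-1)}$ with $N\equiv1\bmod p^{n+t}$, and that $(N^p-1)/(N-1)\equiv p\bmod p^{n+1+t}$ because $p$ is odd; after the identifications of Lemma \ref{kay1} (compatible with norms since the global norm reduces to the residue-field norm at the primes above $Q$) the norm maps on the $\mu_{p^{n+t}}$-coordinates are therefore the standard projections $\zeta\mapsto\zeta^p$. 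Applying ${\rm lim}_{\leftarrow}$ (exact since all terms are finite) to $0\to\mu_{p^{n+t}}\to(\mu_{p^{n+t}})^m\to\kay_{n,Q}^-\to0$ then gives $0\to\Z_p(1)\to\Z_p(1)^m\to{\rm lim}_{\leftarrow}\kay_{n,Q}^-\to0$, so ${\rm lim}_{\leftarrow}\kay_{n,Q}^-\simeq\Z_p(1)^{m-1}$.

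It remains to translate the resulting sequence into Galois terms and to obtain (\ref{exact}). By class field theory the inverse limit under norm maps of the Sylow $p$-subgroups of the minus parts of the ray class groups of conductor $Q_n$ (resp.\ conductor $1$) of $\f_n$ is ${\rm Gal}(\el_{\infty,Q}^-/\f_\infty)=\X_{\infty,Q}^-$ (resp.\ ${\rm Gal}(\el_\infty^-/\f_\infty)=\X_\infty^-$), with the displayed map being restriction; its kernel ${\rm Gal}(\el_{\infty,Q}^-/\el_\infty^-)$ is, by maximality of $\el_\infty^-$ among everywhere-unramified odd abelian $p$-extensions of $\f_\infty$, exactly the subgroup $I_Q$ generated by the inertia at the primes above $Q$, and comparison with the sequence just obtained identifies $I_Q$ with ${\rm lim}_{\leftarrow}\kay_{n,Q}^-\simeq\Z_p(1)^{m-1}$. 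I expect no genuine obstacle here: the only steps needing care are the identification ${\rm lim}_{\leftarrow}\kay_{n,Q}^-\simeq\Z_p(1)^{m-1}$ (resting on the congruence $(N^p-1)/(N-1)\equiv p\bmod p^{n+1+t}$ and on the norm-compatibility of the isomorphisms of Lemma \ref{kay1}) and the matching of this arithmetic kernel with the Galois-theoretic inertia subgroup $I_Q$, where one must keep the odd part and the ramification bookkeeping straight; I would regard the latter as the main point to pin down carefully.
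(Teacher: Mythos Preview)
Your proposal is correct and follows the same approach as the paper, just with considerably more detail: the paper's entire proof is the single sentence ``The exactness in part (2) follows using Mittag--Leffler criterion Prop.\ 2.7.3 of \cite{NSW},'' leaving exactness of direct limits, the identification of the limit terms, the existence of the $\Z_p$-section, and the class field theory translation all implicit. Your explicit computation of $\varprojlim\kay_{n,Q}^-$ via the congruence $(N^p-1)/(N-1)\equiv p\pmod{p^{n+1+t}}$ and your discussion of the Galois-theoretic identification of $I_Q$ are genuine additions beyond what the paper writes out, and they are correct.
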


\begin{proof}
The  exactness in part  (2) follows using Mittag-Leffler criterion Prop. 2.7.3 of \cite{NSW}.  
 \end{proof}





\subsection{\it Cohomology classes:} We consider 
$m=2$ (recall that $m$ is the number of elements of $Q$).
The exact sequence (\ref{exact1}), gives rise to a cyclic   $\Z_p$-submodule of $H^1(\Gamma,{\rm Hom}(\EA_\infty^-,\Q_p/\Z_p(1)))=H^1(\Gamma,Y_\infty)$ the latter isomorphism by Iwasawa duality.  We  
  define a  cocycle corresponding to the above extension $c_\gamma \in {\rm Hom}(\EA_\infty^-,\Q_p/\Z_p(1)) $ by $c_\gamma= \gamma f -f$ 
  where $f$ is a  $\Z_p$-linear section  $\EA_\infty^- \rightarrow \EA_{\infty,Q}^-$ which we know exists  by Cor. \ref{added}.   The class of the cocycle $[c_\gamma]$ does not depend on the choice of the section $f$.  We can also obtain $[c_\gamma]$ as follows. From the exact sequence (\ref{exact1}), taking ${\rm Hom}_{\Z_p}(\EA_\infty^-, -)$, using the divisibility of $\Q_p/\Z_p(1)$ we deduce the exact sequence $$ 0 \rightarrow {\rm Hom}(\EA_\infty^-,\Q_p/\Z_p(1))
\rightarrow {\rm Hom}(\EA_\infty^-,\EA_{\infty,Q}^-) \rightarrow {\rm Hom}(\EA_\infty^-,\EA_{\infty}^-) \rightarrow 0,$$ and then taking $\Gamma$-invariants, Lemma \ref{generality} gives 
  $$ 0 \rightarrow {\rm Hom}(\EA_\infty^-,\Q_p/\Z_p(1))^\Gamma
\rightarrow {\rm Hom}(\EA_\infty^-,\EA_{\infty,Q}^-)^\Gamma \rightarrow {\rm Hom}(\EA_\infty^-,\EA_{\infty}^-)^\Gamma \rightarrow^\delta $$ $$H^1(\Gamma, {\rm Hom}(\EA_\infty^-,\Q_p/\Z_p(1))).$$ The cohomology class $[c_\gamma]$ is $\delta({\rm id})$. We see that (\ref{exact1}) splits as a sequence of $\Lambda$-modules if and only if $[c_\gamma]=0$.

   The $\Z_p$-module generated by  $[c_\gamma]$  in the cohomology group, we call $N_Q \subset H^1(\Gamma, {\rm Hom}_{\Z_p}(\EA_\infty^-,\Q_p/\Z_p(1)))=H^1(\Gamma,Y_\infty)$, the latter being induced by the Iwasawa isomorphism. 
 
 \vskip .5cm 
 
 \subsection{\it The reciprocity conjecture}\label{splitting}
 
  \begin{conj} \label{rec}(Reciprocity conjecture)
 Under the Iwasawa isomorphism,  $N_Q$ is mapped isomorphically  to $M_Q$ (both of them are pro-$p$ cyclic groups as $m=2$).
 \end{conj}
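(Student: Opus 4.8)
The plan is to realise both cyclic $\Z_p$-modules $N_Q$ and $M_Q$ as images of a single connecting homomorphism coming from the field-theoretic exact sequence (\ref{exact}), and then to match them via the Iwasawa pairing of Theorem \ref{iwa}(i). Concretely, consider the exact sequence of $\Lambda$-modules $0 \to \Z_p(1) \to \X_{\infty,Q}^- \to \X_\infty^- \to 0$ from (\ref{exact}) (with $m=2$), which by Corollary \ref{added} is the Galois-theoretic incarnation of the norm-inverse-limit of (\ref{rub}). Applying $\mathrm{Hom}_{\Z_p}(-,\Q_p/\Z_p(1))$ to (\ref{exact}) and taking $\Gamma$-cohomology, one obtains, via the Iwasawa isomorphism $Y_\infty = \mathrm{Hom}_{\Z_p}(\EA_\infty^-,\Q_p/\Z_p(1))$ of Theorem \ref{iwa}(i), a connecting map whose image is $N_Q$; dually, applying $\Gamma$-invariants directly to (\ref{exact}) and using Lemma \ref{generality} produces a connecting map $\delta\colon (\X_\infty^-)^\Gamma \to H^1(\Gamma,\Z_p(1))$. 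The first step is to check carefully that the Iwasawa pairing, being perfect and $\Gamma$-equivariant, identifies the cohomology sequence obtained from $\mathrm{Hom}(-,\Q_p/\Z_p(1))$ applied to (\ref{exact1}) with (the $\Q_p/\Z_p$-dual of) the cohomology sequence obtained from (\ref{exact}) itself; this is a formal but slightly delicate diagram chase since one direction uses the direct limit of (\ref{rub}) and the other its inverse limit.

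The second step is to identify $M_Q$ Galois-theoretically. By definition $M_Q \subset Y_\infty'/(\gamma-1)Y_\infty = \mathrm{Gal}(M_\infty'/F)$, where $M_\infty'$ is the maximal abelian $p$-extension of $F$ unramified outside $p$, is the degree-$0$ part of the $\Z_p$-span of $\mathrm{Frob}_{q_1},\mathrm{Frob}_{q_2}$. I would use Lemma \ref{trivial} to write $M_Q = M_{q_1,q_2}$ as generated by a single element $a_1\mathrm{Frob}_{q_1} - a_2\mathrm{Frob}_{q_2}$ of degree $0$, and then interpret this Frobenius combination, under global class field theory and the reciprocity law, as the image of an idele/divisor supported at $q_1,q_2$. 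The point is that a degree-$0$ divisor on the two Frobenius elements is precisely what measures the failure of the inertia subgroup $I_Q$ in $\X_{\infty,Q}^-$ to split off $\Z_p(1)$: the two inertia generators at $q_1,q_2$ in $\mathrm{Gal}(\el_{\infty,Q}^-/\f_\infty)$ are constrained by the product formula (global reciprocity), and the obstruction to splitting $0\to\Z_p(1)\to\X_{\infty,Q}^-\to\X_\infty^-\to 0$ as $\Lambda$-modules is exactly $\delta(\mathrm{id})$ — which by the first step corresponds to $[c_\gamma]$, the generator of $N_Q$.

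The third step is to compute both connecting maps explicitly and see they agree. On the $N_Q$ side, $[c_\gamma] = \gamma f - f$ for a $\Z_p$-linear section $f$ of (\ref{exact1}); on the $M_Q$ side, $\delta$ sends the identity of $\X_\infty^-$ to the class measuring how inertia at $Q$ wraps around in $H^1(\Gamma,\Z_p(1))$. Both are governed by the same local-to-global data at $q_1$ and $q_2$: the reduction maps $(\Oc_{\f_n}/Q_n)^* \supset \kay_{n,Q}^- \cong (\mu_{p^{n+t}})^2/\mu_{p^{n+t}}$ of Lemmas \ref{kay1}, \ref{injectivity}, which carry the ramification, are dual (under the residue-field pairing $k_{q_i}^* \times \Z \to $ value group, i.e.\ the tame symbol) to the decomposition-group/Frobenius data defining $M_Q$. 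So the required isomorphism $N_Q \xrightarrow{\sim} M_Q$ is a reflection of the compatibility of the Iwasawa pairing with local Tate duality at the auxiliary primes $q_1,q_2$, together with the fact that the global pairing is perfect so that the degree-$0$ condition on the $Y_\infty'$-side matches the "trace-zero" condition $(\mu_{p^{n+t}})^2/\mu_{p^{n+t}}$ on the $\EA_{\infty,Q}^-$-side.

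\textbf{Main obstacle.} I expect the hard part to be the bookkeeping that makes the diagram chase of the first step rigorous: one must simultaneously track a direct limit (of (\ref{rub}), giving $\EA_{\infty,Q}^-$ and the cocycle $c_\gamma$) and an inverse limit (of (\ref{rub}), giving $\X_{\infty,Q}^-$ and the inertia subgroup $I_Q$), and verify that the Iwasawa pairing intertwines the two connecting homomorphisms with the correct Tate twist and sign. In particular, proving that the degree map on $Y_\infty'/(\gamma-1)Y_\infty$ corresponds, under the pairing and the Iwasawa isomorphism, exactly to the map "forget the $\Z_p(1)^{m-1}$" in (\ref{exact1}) — so that $M_Q$ (degree $0$) matches $N_Q$ (the obstruction class living in $H^1(\Gamma,Y_\infty)$ rather than $H^1(\Gamma,Y_\infty')$) — is where the real content lies, and is presumably also where the authors' own argument stops short of the full conjecture, proving only the implication recorded in Theorem \ref{intersection}.
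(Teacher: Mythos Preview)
The statement you are attempting to prove is a \emph{conjecture} in the paper, not a theorem: the authors say explicitly in the introduction that ``the proof of the reciprocity conjecture has eluded us.'' There is no proof in the paper to compare against; what the paper offers as evidence is Theorem~\ref{intersection} and Corollary~\ref{evidence}, which establish only the implication $M_Q$ trivial $\Rightarrow$ (\ref{exact1}) splits (equivalently $N_Q$ trivial), and a divisibility statement relating $m_Q$ to the degree of $L_Q\cap\el_\infty^-$ over $\f_\infty$.

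Your proposal is an outline, and you yourself correctly flag its gap. The substantive problem is in your third step: the sentence ``the required isomorphism $N_Q\xrightarrow{\sim}M_Q$ is a reflection of the compatibility of the Iwasawa pairing with local Tate duality at the auxiliary primes $q_1,q_2$'' is not a deduction but a restatement of the conjecture. The Iwasawa pairing $Y_\infty\times\EA_\infty^-\to\Q_p/\Z_p(1)$ is a global object built from the \emph{unramified} class groups and does not a priori see the primes in $Q$. To carry out your plan one would need an extended pairing between (something like) $Y'_\infty/(\gamma-1)Y_\infty$ and $\EA_{\infty,Q}^-$ that simultaneously (a) restricts to the Iwasawa pairing, (b) identifies the degree map on the $Y'_\infty$-side with the ``quotient by the diagonal $\mu_{p^{n+t}}$'' on the $\kay^-$-side, and (c) sends the Frobenius element $\mathrm{Frob}_{q_i}$ to the local reciprocity datum at $q_i$. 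Assertion (c) is exactly the reciprocity statement being conjectured; it is not supplied by the perfectness of the known pairing, nor by Lemma~\ref{generality}, nor by any formal diagram chase between the direct and inverse limits of (\ref{rub}).

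By contrast, the paper's partial results are obtained by a completely different, concrete route: explicit Kummer theory over $F$. One manufactures an element $\alpha\in\widehat{E_Q}$ with $\mathrm{loc}_p(\alpha)$ torsion and controlled $v_{q_i}(\alpha)$ (Lemma~\ref{val}), reads off the intersection degree $[L_\alpha\cap\el_\infty^-:\f_\infty]$ from the $q_i$-valuations (Lemma~\ref{intdeg}), and compares this to the order of $M_Q$ via the class-field-theory sequence (\ref{cft}). This gives the one-directional evidence of Corollary~\ref{evidence}(ii) but does not, and the authors do not claim it does, yield the full identification $N_Q=M_Q$.
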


We view this as a reciprocity conjecture as we have the isomorphism (induced by the Iwasawa isomorphism) $$H^1(\Gamma,Y_\infty)=H^1(\Gamma,{\rm Hom}(\EA_\infty^-,\Q_p/\Z_p(1))),$$ natural
$\Z_p$-lines $M_Q$ and $N_Q$  on both sides associated to pairs of primes $(q_1,q_2)$  such that the image of their Frobenius generates $\Gamma$. The conjecture predicts that these lines are preserved under the Iwasawa isomorphism.


\noindent {\it Remark:} We may make a $\Q_p$ version of this conjecture. Namely we conjecture that the $\Q_p$-span of the class in $H^1(\Gamma,Y_\infty \otimes \Q_p)$ arising  from  the extension (\ref{exact})  made using the pairing $Y_\infty \times X_\infty^- \rightarrow \Q_p (1),$ is the same as $M_Q \otimes \Q_p$. (If we assume the Leopoldt conjecture this is simply  asserting that $0=0$!)

\subsection{\it Heuristic justification for the conjecture vis a vis generalised Jacobians} \label{heuristics}

We develop an analogy mentioned in the introduction  a little further by considering a direct analog of Conjecture \ref{rec} for function fields.  Assume that $X$ is a smooth projective  defined over a finite field $k$,  $P,Q \in X(k)$,  with  $J,J_{P,Q}$ as before.
 Let $\ell$ be a prime different from the characteristic of $k$.  Consider the exact sequences of $\Gamma=\hat \Z={\rm Gal}({\overline k}/k)$ modules
 $$ 0\rightarrow  \Z_\ell(1)  \rightarrow {\rm Ta}_\ell(J_{P,Q}) \rightarrow {\rm Ta}_\ell(J) \rightarrow 0,$$ 
 which splits as abelian groups.  It is easily seen that it splits up to isogeny as $\Gamma$-modules 
using the Weil  bounds on eigenvalues of Frobenius.  The corresponding fact for number fields
is unknown, and in analogy with function fields we conjecture it below.

Using the Weil pairing we get isomorphisms  $$H^1(\Gamma, {\rm Hom}_{\Z_\ell}(J({\overline k})[\ell^\infty],\Q_\ell(1)/\Z_\ell(1))) \simeq H^1(\Gamma, {\rm Hom}_{\Z_\ell}({\rm Ta}_\ell(J),\Z_\ell(1))) $$ $$ \simeq H^1(\Gamma,{\rm Ta}_\ell(J))=J(k)[\ell^\infty].$$  Then just as we did in a similar situation earlier we can form a  cyclic subgroup of $H^1(\Gamma,{\rm Ta}_\ell(J))=J(k)[\ell^\infty]$  which  arises from the extension classes arising from  the exact sequence above.   As N.~Fakhruddin explained to us,   in this case  one can indentify this extension class  with the projection of  $(P)-(Q)$ to the $\ell$-part of  $J(k)$, in perfect analogy with our Conjecture \ref{rec}. One may allow $K$  to be any field in the above considerations, by using  the Kummer  map
$\widehat{J(K)} \rightarrow H^1(G_K,{\rm Ta}_\ell(J))$ where $\widehat{J(K)}$ is the pro-$\ell$ completion of  $J(K)$, instead of  the isomorphism $H^1(\Gamma,{\rm Ta}_\ell(J))=J(k)[\ell^\infty]$
when $K$ is a finite field.


\subsection{ \it Splitting conjectures}

\subsubsection{Splitting of ramification away from $p$}

We make the following splitting conjecture motivated by analogy with generalised Jacobians over finite fields.

\begin{conj}\label{rational1}
 The exact sequence (\ref{exact}) $\otimes \Q_p$, i.e.,
 $$
0 \rightarrow \Q_p(1)^{m-1} \rightarrow  \X_{\infty, Q}^-  \otimes \Q_p \rightarrow \X_\infty^- \otimes \Q_p \rightarrow 0,$$ 
 of  $\Gamma$-modules splits.
\end{conj}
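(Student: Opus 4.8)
The plan is to turn the splitting assertion into a vanishing statement for an $\mathrm{Ext}$ group over $\Lambda\otimes\Qp$ and then to see that the residual content is exactly Leopoldt's Conjecture \ref{Leopoldt}; indeed, by Corollary \ref{implication1} the two are equivalent, so no unconditional proof can be expected, and the point of the reduction is to pinpoint the obstruction. Writing $\Lambda\otimes\Qp\cong\Qp[[T]]$, the sequence (\ref{exact})$\otimes\Qp$ represents a class in ${\rm Ext}^1_{\Lambda\otimes\Qp}(\X_\infty^-\otimes\Qp,\Qp(1))^{m-1}$, and it splits as $\Gamma$-modules exactly when that class is zero. Since $\X_\infty^-\otimes\Qp$ is a finite-dimensional $\Qp$-space on which $\gamma$ acts (Iwasawa), a free-resolution computation over $\Qp[[T]]$ shows that this entire $\mathrm{Ext}$ group vanishes as soon as $\chi(\gamma)$ is not a generalized eigenvalue of $\gamma$ on $\X_\infty^-\otimes\Qp$ --- equivalently, as soon as $\gamma-1$ acts invertibly on $\X_\infty^-(-1)\otimes\Qp$. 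So the first step yields the splitting under this spectral hypothesis, with no reference to the set $Q$ at all.

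The second step identifies the spectral hypothesis with Leopoldt. Combining the Iwasawa isomorphism $Y_\infty={\rm Hom}_{\Zp}(\EA_\infty^-,\Qp/\Zp(1))$ of Theorem \ref{iwa}(i) with the pseudo-isomorphism $\tilde{\alpha}(\X_\infty^-)\sim{\rm Hom}(\EA_\infty^-,\Qp/\Zp)$ of Theorem \ref{iwa}(ii) and with Lemmas \ref{opposite} and \ref{adjoints}, one obtains a $\Lambda\otimes\Qp$-isomorphism $Y_\infty\otimes\Qp\cong(\X_\infty^-)^0(1)\otimes\Qp$, on which $\gamma$ acts as $\chi(\gamma)A^{-1}$ with $A$ the action of $\gamma$ on $\X_\infty^-\otimes\Qp$. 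Hence $\gamma-1$ is invertible on $Y_\infty\otimes\Qp$ precisely when $\chi(\gamma)$ avoids the eigenvalues of $A$, which by Proposition \ref{equivalent} is Conjecture \ref{Leopoldt}. This settles Conjecture \ref{rational1} whenever Leopoldt holds; what the conjecture asserts in addition --- that the \emph{specific} ramification class of (\ref{exact}), not merely the ambient $\mathrm{Ext}$ group, still vanishes should Leopoldt fail --- is what the reciprocity Conjecture \ref{rec} is designed to control, via the identification of the dual class in $H^1(\Gamma,Y_\infty)$ with the Frobenius module $M_Q$.

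The main obstacle is therefore Leopoldt's conjecture itself: an unconditional proof of Conjecture \ref{rational1} amounts to proving that $\chi(\gamma)$ lies outside the spectrum of $\gamma$ on $\X_\infty^-$, equivalently that the $p$-adic regulator of the units of the totally real field $F$ is non-zero, and no method is known for general $F$ (Brumer's transcendence-theoretic argument handles abelian $F$ but does not interact with this Iwasawa-module picture). The route I would actually attempt is the one indicated by the authors: because (\ref{exact}) describes the \emph{odd} extension $\el_{\infty,Q}^-/\f_\infty$, one can try to realize $\X_{\infty,Q}^-\otimes\Qp$, together with the sought splitting, inside the big Galois representations attached to Hida's $\Lambda$-adic Hilbert modular forms, and to extract the splitting from Wiles' proof of the main conjecture \cite{Wiles}. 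Converting ``the characteristic ideal of $\X_\infty^-$ equals a $p$-adic $L$-function'' into ``the relevant specialization is non-zero, hence the extension splits'' is where essentially all the difficulty lies, and it is precisely the step that has so far resisted.
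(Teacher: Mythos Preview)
The statement is a \emph{conjecture}; the paper does not prove it, and in fact Corollary~\ref{implication1} shows that its validity for all pairs $Q=\{q_1,q_2\}$ inert in $F_\infty/F$ is \emph{equivalent} to Leopoldt's conjecture. So there is no proof in the paper to compare your attempt against, and you correctly recognise that no unconditional argument can be expected.

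Your conditional argument is correct and coincides with what the paper records. The implication ``Leopoldt $\Rightarrow$ the sequence splits'' via vanishing of ${\rm Ext}^1_{\Lambda\otimes\Qp}(\X_\infty^-\otimes\Qp,\Qp(1))$ is exactly the Remark following Corollary~\ref{implication1}, stated there as ``the $\Lambda$-modules $\Qp(1)^{m-1}$ and $\X_\infty^-\otimes\Qp$ have characteristic polynomials which are prime to each other''. Your second step, translating the spectral condition into Leopoldt via $Y_\infty\otimes\Qp\simeq(\X_\infty^-)^0(1)\otimes\Qp$, is also correct and is the content of Theorem~\ref{iwa}(iii) together with Proposition~\ref{equivalent}. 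One small correction: $\Lambda\otimes_{\Zp}\Qp=\Zp[[T]][1/p]$, which is strictly smaller than $\Qp[[T]]$; this does not affect the argument, since both are principal ideal domains and the modules involved are finite-dimensional over $\Qp$.

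Where the paper's treatment differs is in the route to the \emph{equivalence} of Corollary~\ref{implication1}. Rather than computing an ambient ${\rm Ext}$ group, the paper goes through Theorem~\ref{M_Q}: using the Kummer theory of \S\ref{kummertheory}, the splitting for a \emph{fixed} pair $Q$ is shown to be equivalent to the finiteness of the degree-$0$ Frobenius module $M_Q\subset Y_\infty/(\gamma-1)Y_\infty$, and then one observes (Corollary~\ref{needed}) that these $M_Q$ span the coinvariant module. This is finer than your Ext argument, which only sees the ambient group and not the particular class: it attaches to each $Q$ a specific submodule of $H^1(\Gamma,Y_\infty)$, and it is precisely this refinement that motivates the reciprocity Conjecture~\ref{rec} and the partial evidence in Theorem~\ref{intersection}. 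Your closing remarks on a Hida-theoretic approach via \cite{Wiles} accurately reflect the motivation the authors state in the introduction.
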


\subsubsection{ Splitting of ramification at $p$}

We make an analogous conjecture for splitting of  a certain (cyclotomic) part of the ramification at $p$.  We recall  the following results of Iwasawa.
  
 \begin{lemma}\label{iwasawa1} (Iwasawa)  Let $\wp'_1,\cdots,\wp'_{s'}$  be the  places above $p$ of $\f_\infty$, and $\wp_1,\cdots,\wp_s$ the places of $F$ above $p$. Denote by $\f_{\infty,i}=\cup \f_{n,i}$  the corresponding extension of   the completion of $F$ for $i=1,\cdots,s'$. Let $G_{\wp_j}$ be  decomposition subgroups of $G={\rm Gal}(\f_\infty/F)$ at the places $\wp_j$ of $F$.
  
  Let ${\mathcal  U}_i:={\rm lim}_{\leftarrow}{ U^1_{\f_{n,i}}}  ,$ where $U^1_{\f_{n,i}}$ are the principal units  in the completion  
 $\f_{n,i}$ and the inverse limit is with respect to the  norm maps.  Let ${\mathcal U}=\Pi_{i=1}^{s'} {\mathcal U}_i$, which is a $\Z_p[[G]]$-module.  Then we have an isomorphism of $\Z_p[[G]]$-modules   ${\mathcal U}
 \simeq  (\bigoplus_{j=1}^s {\rm  Ind}_{G_{\wp_j}}^G\Z_p(1)) \bigoplus \Z_p[[G]]^{[F:\Q]}$.
 
 \end{lemma}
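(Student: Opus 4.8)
The plan is to reduce the statement to Iwasawa's local computation of norm-coherent principal units in the $p$-cyclotomic tower of a $p$-adic field, and then reassemble the global answer by a Shapiro-type induction.

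First I would organise the factors $\mathcal{U}_i$ according to the place of $F$ below $\wp'_i$. The group $G={\rm Gal}(\f_\infty/F)$ permutes the primes $\wp'_1,\dots,\wp'_{s'}$ above $p$, transitively on those above a fixed $\wp_j$; since $G$ is abelian, the decomposition group of each $\wp'_i$ above $\wp_j$ is the common subgroup $G_{\wp_j}$, and $\{\wp'_i\}=\coprod_{j=1}^{s}G/G_{\wp_j}$ as $G$-sets. For $\sigma\in G$ the induced isomorphism $\f_{\infty,i}\xrightarrow{\sim}\f_{\infty,\sigma i}$ is norm-compatible, hence induces $\mathcal{U}_i\xrightarrow{\sim}\mathcal{U}_{\sigma i}$; choosing one prime $\wp'_{i_j}$ above each $\wp_j$, the family $(\mathcal{U}_i)$ is thereby determined by the $G_{\wp_j}$-module $\mathcal{U}_{i_j}$, and
$$\mathcal{U}=\prod_{i=1}^{s'}\mathcal{U}_i\;\simeq\;\bigoplus_{j=1}^{s}{\rm Ind}_{G_{\wp_j}}^{G}\mathcal{U}_{i_j}$$
as $\Z_p[[G]]$-modules, the product over the primes above $\wp_j$ being the (co)induced module (induced and coinduced agreeing since $[G:G_{\wp_j}]<\infty$).

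Next I would establish the local statement: writing $K=F_{\wp_j}$, $K_\infty=\f_{\infty,i_j}$, $\Gamma_j=G_{\wp_j}={\rm Gal}(K_\infty/K)$ and $K_n=\f_{n,i_j}$, one has a $\Z_p[[\Gamma_j]]$-isomorphism $\mathcal{U}_{i_j}\simeq\Z_p[[\Gamma_j]]^{[K:\Q_p]}\oplus\Z_p(1)$. Using the idempotents attached to the prime-to-$p$ part of $\Gamma_j$ (harmless as $p$ is odd) one reduces to the case $\Gamma_j\cong\Z_p$. The torsion submodule of $\mathcal{U}_{i_j}$ equals $\varprojlim_n\mu_{p^\infty}(K_n)$; computing the norm of a $p$-power root of unity along $K_{n+1}/K_n$ shows these transition maps are surjective, so this limit is $\Z_p(1)$. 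For the quotient $V:=\mathcal{U}_{i_j}/\Z_p(1)$, the $p$-adic logarithm identifies a finite-index subgroup of $U^1_{K_n}$ with a lattice in $K_n$, and the normal basis theorem identifies $K_n$ with a free $\Q_p[{\rm Gal}(K_n/K)]$-module of rank $[K:\Q_p]$; passing to the inverse limit over $n$ and checking that $V$ has no nonzero finite submodule together with the relevant depth condition over $\Z_p[[\Gamma_j]]$ shows that $V$ is $\Z_p[[\Gamma_j]]$-free of rank $[K:\Q_p]$. Being free, hence projective, $V$ splits the extension $0\to\Z_p(1)\to\mathcal{U}_{i_j}\to V\to0$.

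Finally I would reassemble. Since ${\rm Ind}_{G_{\wp_j}}^{G}\Z_p[[G_{\wp_j}]]\simeq\Z_p[[G]]$ and induction is additive,
$${\rm Ind}_{G_{\wp_j}}^{G}\mathcal{U}_{i_j}\simeq\Z_p[[G]]^{[F_{\wp_j}:\Q_p]}\oplus{\rm Ind}_{G_{\wp_j}}^{G}\Z_p(1),$$
and summing over $j$, using $\sum_{j=1}^{s}[F_{\wp_j}:\Q_p]=[F:\Q]$, gives $\mathcal{U}\simeq\big(\bigoplus_{j=1}^{s}{\rm Ind}_{G_{\wp_j}}^{G}\Z_p(1)\big)\oplus\Z_p[[G]]^{[F:\Q]}$. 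The step I expect to be the main obstacle is the local structure theorem, and within it the passage to the inverse limit establishing that $V$ is genuinely \emph{free}, rather than merely torsion-free, over $\Z_p[[\Gamma_j]]$: this is precisely where the wild ramification of $K_\infty/K$ could intervene and must be controlled. As this is a classical theorem of Iwasawa, one may alternatively just cite it.
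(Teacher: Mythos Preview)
Your proposal is correct and in fact more detailed than the paper's own argument: the paper simply cites Theorem~11.2.4 of Neukirch--Schmidt--Wingberg, \emph{Cohomology of Number Fields}, and says the lemma follows easily. Your outline---grouping the $\mathcal{U}_i$ by the place of $F$ below them to write $\mathcal{U}$ as a direct sum of induced modules, then invoking Iwasawa's local structure theorem $\mathcal{U}_{i_j}\simeq \Z_p[[G_{\wp_j}]]^{[F_{\wp_j}:\Q_p]}\oplus\Z_p(1)$, and finally using ${\rm Ind}_{G_{\wp_j}}^G\Z_p[[G_{\wp_j}]]\simeq\Z_p[[G]]$ and $\sum_j[F_{\wp_j}:\Q_p]=[F:\Q]$---is exactly what lies behind that citation, and you are right that the delicate point is the freeness of the torsion-free quotient in the local tower, which is precisely the content of Iwasawa's theorem (and of the cited NSW result).
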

 \begin{proof}
 This  follows  easily from   Theorem 11.2.4 of \cite{NSW}. 
 
 \end{proof}
 
 \begin{cor}\label{imp}
  
  Recall that $s$ is the number of places above $p$ of $F$.
  
   -- 1. The $\Z_p$-rank  of the group generated by  inertia groups at the places  above $p$ of $\f_\infty$  in the Galois group of the  maximal odd  abelian $p$-extension $N_\infty$   of $\f_\infty$  on which $\Gamma$ acts by  the $p$-adic cyclotomic character $\chi$  on the inertia above $p$,  say $I_p$, is  $[F:\Q]+s$. 

   --  2. The $\Z_p$-rank  of the group generated by  the  inertia groups at the places above $p$ of $\f_\infty$, in the Galois group that we denote by $\X_{\infty,p}^-$,  of the  maximal odd  abelian $p$-extension $N'_\infty$   of $\f_\infty$   that is unramified outside $p$, and  on which $\Gamma$ acts by  the $p$-adic cyclotomic character $\chi$  on the inertia above $p$,  say $I'_p$, is  $[F:\Q]+s-1$. 

 \end{cor}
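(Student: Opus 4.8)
The plan is to read off both ranks from Iwasawa's description of $\mathcal{U}$ in Lemma \ref{iwasawa1}, after using local class field theory to identify the inertia subgroup at the places above $p$ with an explicit quotient of $\mathcal{U}$. Put $G=\mathrm{Gal}(\f_\infty/F)$, so that a $\Lambda$-module carrying the extra action of complex conjugation is a $\Z_p[[G]]$-module, and note that the condition ``odd, and $\Gamma$ acts on the inertia above $p$ by $\chi$'' is the same as ``$G$ acts on that inertia through $\chi$'' (complex conjugation acting by $\chi(c)=-1$). Let $I_\chi=\mathrm{Ker}\bigl(\Z_p[[G]]\xrightarrow{\ \chi\ }\Z_p\bigr)$; then $\Z_p(1)\cong\Z_p[[G]]/I_\chi$ and, for any $\Z_p[[G]]$-module $M$, the module $M/I_\chi M=M\otimes_{\Z_p[[G]]}\Z_p(1)$ is the maximal quotient of $M$ on which $G$ acts through $\chi$.

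First I would treat part 1. Since $N_\infty$ is the \emph{maximal} odd abelian $p$-extension of $\f_\infty$ whose inertia above $p$ is of $\chi$-type, local class field theory identifies $I_p$ with the image of the inverse limit of local principal units, namely with $\mathcal{U}/I_\chi\mathcal{U}$: no global relation among the local contributions at the places above $p$ survives, because $N_\infty$ is unconstrained away from $p$ and the reciprocity relation attached to a global element can be absorbed by ramification at auxiliary primes. Then I would substitute the isomorphism $\mathcal{U}\simeq\bigl(\bigoplus_{j=1}^{s}\mathrm{Ind}_{G_{\wp_j}}^{G}\Z_p(1)\bigr)\oplus\Z_p[[G]]^{[F:\Q]}$ of Lemma \ref{iwasawa1} and compute: $\Z_p[[G]]\otimes_{\Z_p[[G]]}\Z_p(1)=\Z_p(1)$ has $\Z_p$-rank $1$, and, by the projection formula $\mathrm{Ind}_{H}^{G}(M)\otimes_{\Z_p[[G]]}N\cong M\otimes_{\Z_p[[H]]}N$ followed by $A\otimes_{R}(R/I)=A/IA$, each $\mathrm{Ind}_{G_{\wp_j}}^{G}\Z_p(1)\otimes_{\Z_p[[G]]}\Z_p(1)\cong\Z_p(1)\otimes_{\Z_p[[G_{\wp_j}]]}\Z_p(1)\cong\Z_p$ also has $\Z_p$-rank $1$. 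Adding the $s$ contributions of the primes $\wp_j$ of $F$ above $p$ and the $[F:\Q]$ contributions of the free summands gives $\mathrm{rank}_{\Z_p}I_p=[F:\Q]+s$.

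For part 2, $N'_\infty\subseteq N_\infty$ is cut out by the extra requirement of being unramified outside $p$, so the product formula of global class field theory presents $I'_p$ as the quotient of $\mathcal{U}/I_\chi\mathcal{U}$ by the image — under the reciprocity maps at the places above $p$ — of the inverse limit over $n$ of the $p$-adic closures of the global units of $\f_n$; equivalently, this image is the inertia term appearing in the relevant Iwasawa exact sequence. I would then show the image has $\Z_p$-rank exactly $1$. For $p$ odd the $p$-adic completion of the minus part (under complex conjugation) of the unit group of $\f_n$ is $\mu_{p^{n+t}}$ (the square of a minus-unit is a root of unity), so its inverse limit over $n$ is $\Z_p(1)$, and $\Z_p(1)/I_\chi\Z_p(1)=\Z_p(1)$ has rank $1$. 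Moreover this copy of $\Z_p(1)$ lands in the summand $\bigoplus_{j}\mathrm{Ind}_{G_{\wp_j}}^{G}\Z_p(1)$ of $\mathcal{U}$ (there is no nonzero $G$-homomorphism $\Z_p(1)\to\Z_p[[G]]$, as one sees by inspecting the $\pm$-eigenspaces of $c$), and its image in $\mathcal{U}/I_\chi\mathcal{U}$ is nonzero since each $\zeta_{p^{n}}$ is a nontrivial principal unit at every place above $p$. Hence $\mathrm{rank}_{\Z_p}I'_p=([F:\Q]+s)-1=[F:\Q]+s-1$.

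The main obstacle is the infinite-level local--global bookkeeping: making precise that $I_p$ equals $\mathcal{U}/I_\chi\mathcal{U}$ with no hidden relation — a freeness statement for the local conditions at $p$ (of Grunwald--Wang type) once ramification away from $p$ is allowed — and, for $N'_\infty$, verifying that the \emph{only} extra relation is the one-dimensional one produced by the roots of unity, which requires identifying the minus part of the global units in the cyclotomic tower and checking injectivity of its image in $\mathcal{U}/I_\chi\mathcal{U}$. Once those identifications are granted, the rest is a routine manipulation of induced modules together with Lemma \ref{iwasawa1}.
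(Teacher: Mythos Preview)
Your approach is essentially the same as the paper's. Both identify $I_p$ with the $\chi$-coinvariants $\mathcal{U}/I_\chi\mathcal{U}$ (the paper writes this as $\mathcal{U}/(\gamma'-\chi(\gamma'))\mathcal{U}$) via local class field theory, and $I'_p$ with the further quotient by the copy of $\Z_p(1)$ coming from global roots of unity, then read off the ranks from Lemma~\ref{iwasawa1}; you simply fill in more of the bookkeeping (the projection-formula computation for the induced summands, and the check that the image of $\Z_p(1)$ in the coinvariants is nontorsion) that the paper leaves implicit.
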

 
 \begin{proof}
By class field theory,  for every $n$, the image of inertia above $p$ in the Galois group
 of the maximal abelian odd $p$-extension  of $\mathcal{F}_n$
 is isomorphic to $U^1_{\mathcal{F}_n}$.
 The first part of the corollary 
 then follows from the last lemma and the fact that the image of inertia above $p$ in ${\rm Gal}(N_\infty/\f_\infty)$ is isomorphic  theory to ${\mathcal U} /(\gamma'-\chi(\gamma'))$ where $\gamma'$ is a generator of ${\rm Gal}(\f_\infty/F)$. Similarly the image of inertia above $p$ in ${\rm Gal}(N'_\infty/\f_\infty)$ is isomorphic by class field theory to ${ {\mathcal U} /(\gamma'-\chi(\gamma')) \over \Z_p(1)}$.
 \end{proof}

Consider  $\X_{\infty,p}^-$, the Galois group of the  maximal odd abelian $p$-extension, denoted   $N'_\infty$  above, of $\f_\infty$ that is unramified outside $p$, and  on which ${\rm Gal} (\f_\infty/F)$ acts on the subgroup  $I'_p$ generated by the inertia groups at places above $p$ via the  $p$-adic cyclotomic character $\chi$.
Then we have an exact sequence of $\Lambda$-modules
\begin{equation}\label{atp}
0 \rightarrow  I_p' \otimes \Q_p  \rightarrow  \X_{\infty, p}^-  \otimes \Q_p \rightarrow \X_\infty^- \otimes \Q_p \rightarrow 0,
\end{equation}  We  know by Cor. \ref{imp} that $I'_p \otimes \Q_p$ is isomorphic to $\Q_p(1)^{[F:\Q]+s-1}$ as $\Lambda$-module. 

We make in the situation another splitting conjecture.

\begin{conj}\label{rational2}
The exact sequence (\ref{atp}) of $\Lambda$-modules splits.
\end{conj}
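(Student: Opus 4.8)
The plan is to recast the splitting of (\ref{atp}) as the vanishing of an $\mathrm{Ext}^1$-group over the ring $\Lambda\otimes\Q_p$, and then to deduce that vanishing from Leopoldt's conjecture (Conjecture \ref{Leopoldt}) via the Iwasawa pairing. This is meant to be the precise number-field counterpart of the splitting ``up to isogeny'' of $\mathrm{Ta}_\ell(J_{P,Q})$ in \S\ref{heuristics}, with Leopoldt's conjecture in the role played there by the Weil bounds.

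First I would use that a short exact sequence of $\Lambda\otimes\Q_p$-modules splits if and only if its extension class vanishes, together with the isomorphism $I_p'\otimes\Q_p\simeq\Q_p(1)^{[F:\Q]+s-1}$ of $\Lambda$-modules from Corollary \ref{imp}; so it suffices to prove $\mathrm{Ext}^1_{\Lambda\otimes\Q_p}(\X_\infty^-\otimes\Q_p,\Q_p(1))=0$. Here $\Lambda\otimes\Q_p=\Lambda[1/p]$ is a principal ideal domain (a regular one-dimensional localisation of the unique factorisation domain $\Lambda$, with trivial Picard group), and $\X_\infty^-\otimes\Q_p$ is a finitely generated torsion module over it by Iwasawa's theorem, hence a finite direct sum of cyclic modules $(\Lambda\otimes\Q_p)/(P_i)$. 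Writing $\Q_p(1)=(\Lambda\otimes\Q_p)/(\pi)$ with $\pi=T-(\chi(\gamma)-1)$ a prime element, and applying $\mathrm{Hom}_{\Lambda\otimes\Q_p}(-,\Q_p(1))$ to the evident presentations of the $(\Lambda\otimes\Q_p)/(P_i)$, one obtains $\mathrm{Ext}^1_{\Lambda\otimes\Q_p}(\X_\infty^-\otimes\Q_p,\Q_p(1))\simeq\bigoplus_i(\Lambda\otimes\Q_p)/(P_i,\pi)$. This vanishes precisely when $\pi$ divides no $P_i$, i.e. precisely when $\mathrm{char}_\Lambda(\X_\infty^-)$ does not vanish at $T=\chi(\gamma)-1$.

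Next I would identify the latter condition with Leopoldt. By the Iwasawa pairing (Theorem \ref{iwa}(iii)) there is a $\Lambda\otimes\Q_p$-isomorphism $\X_\infty^-\otimes\Q_p\simeq\mathrm{Hom}_{\Lambda\otimes\Q_p}(Y_\infty\otimes\Q_p,\Q_p(1))$; combining Pontryagin duality with the $\chi(\gamma)$-twist, the roots of $\mathrm{char}_\Lambda(\X_\infty^-)$ are the numbers $\chi(\gamma)(1+\theta)^{-1}-1$ as $\theta$ runs over the roots of $\mathrm{char}_\Lambda(Y_\infty)$. In particular $\mathrm{char}_\Lambda(\X_\infty^-)$ vanishes at $T=\chi(\gamma)-1$ if and only if $\mathrm{char}_\Lambda(Y_\infty)$ vanishes at $T=0$; and since $Y_\infty$ has no nonzero finite $\Lambda$-submodule (recalled in \S\ref{Iwasawa}), this in turn is equivalent to $Y_\infty/(\gamma-1)Y_\infty$ being infinite, which by Proposition \ref{equivalent} is the failure of Leopoldt's conjecture for $F$ and $p$. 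So, granting Leopoldt's conjecture, the $\mathrm{Ext}^1$ above vanishes and Conjecture \ref{rational2} follows.

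The hard part is to do better: to prove Conjecture \ref{rational2} unconditionally, or, if it is meant to be equivalent to Leopoldt, to establish the converse implication. When Leopoldt fails the relevant $\mathrm{Ext}^1$ is nonzero, so one must pin down the \emph{specific} class of (\ref{atp}) rather than argue formally. The natural route is a local--global description: realise $\X_{\infty,p}^-$ through the Galois cohomology of $\Z_p(1)$ over $\f_\infty$ with ramification confined to $p$, so that the class of (\ref{atp}) becomes the obstruction carried by the localisation-at-$p$ map, whose cokernel is governed by the module of local units $\mathcal U$ of Lemma \ref{iwasawa1}; identifying this class would amount to relating it to the $p$-adic regulator that measures the defect $\delta_{F,p}$. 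This last step is where, as with the reciprocity conjecture itself, I expect the argument to stall.
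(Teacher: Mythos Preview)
Your deduction of the splitting from Leopoldt is correct, and it is essentially the argument the paper itself records (for the parallel Conjecture~\ref{rational1}) in the Remark following Corollary~\ref{implication1}: Leopoldt is equivalent to the finiteness of $\X_\infty^-/(\gamma-\chi(\gamma))\X_\infty^-$, i.e.\ to $T-(\chi(\gamma)-1)$ being coprime to $\mathrm{char}_\Lambda(\X_\infty^-)$, and over the PID $\Lambda\otimes\Q_p$ this coprimality kills every extension of $\X_\infty^-\otimes\Q_p$ by $\Q_p(1)$. The forward direction in the proof of Proposition~\ref{splittingatp} invokes exactly this finiteness of $\X_\infty^-/(\gamma-\chi(\gamma))$.

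Where you diverge from the paper is in the converse. You rightly note that once $\mathrm{Ext}^1\neq 0$ the homological argument says nothing about the particular class of (\ref{atp}), and you stop. The paper does not stop: Proposition~\ref{splittingatp} establishes the full equivalence of Conjecture~\ref{rational2} with Leopoldt, and the converse is obtained not by computing the extension class but by exhibiting an explicit candidate splitting and testing it. One takes $E'_F$, the group of $p$-units of $F$ (of $\Z$-rank $[F:\Q]+s-1$, matching the rank of $I'_p$), and forms the Kummer tower $\el=\f_\infty({E'_F}^{1/p^\infty})$; the claim is that (\ref{atp}) splits precisely when $\el/\f_\infty$ is almost totally ramified above $p$. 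The converse then comes from Lemma~\ref{rami}: the $\Z_p$-rank of the inertia-generated subgroup of $\mathrm{Gal}(\el/\f_\infty)$ above $p$ equals the $\Z_p$-rank of the closure of $E'_F$ in $\prod_{v\mid p}\widehat{F_v^*}$, which is $s$ plus the $\Z_p$-rank of $\overline{E_F^1}\subset U_F^1$; almost-total ramification therefore forces $\overline{E_F^1}$ to have rank $[F:\Q]-1$, which is Leopoldt. Your closing intuition that the obstruction should be the $p$-adic regulator is thus exactly right; the paper's device is to read that regulator off as a ramification rank via Kummer theory of the $p$-units, bypassing the cohomology-class computation entirely.
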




\section{ Relation to  Leopoldt's conjecure}

We show that the splitting conjectures are equivalent to Leopoldt's conjecture.

We begin with some generalities.  We denote by $F_p^*$ the group $\Pi_{v|p}F_v^*$, $U_{F}$ the group $\Pi_{v|p} U_{F_v}$  with $U_{F_v}$ the units of $F_v$. We denote by $U_F^1$ the group $\Pi_{v|p} U_{F_v}^1$ of 1-units. 


\begin{definition}\label{isogeny}

-- We say that a $\Lambda$ map $M \rightarrow N$ of compact finitely generated  torsion $\Lambda$-modules is an isogeny if the kernel and cokernel are torsion abelian groups (necessarily of bounded  exponent and finitely generated as $\Lambda$-modules).

-- If $$0 \rightarrow K \rightarrow M \rightarrow N \rightarrow 0$$ is a sequence of compact finitely generated  torsion $\Lambda$-modules, we say that it splits up to isogeny if  the sequence of $\Lambda$-modules $$0 \rightarrow K \otimes \Q_p \rightarrow M \otimes \Q_p \rightarrow N \otimes \Q_p \rightarrow 0$$ splits.
\end{definition}

We have a lemma that is  a direct consequence of the definition.
 
\begin{lemma}\label{useful1}
Consider an exact sequence  $$0 \rightarrow K \rightarrow M \rightarrow N \rightarrow 0$$  of compact finitely generated torsion $\Lambda$-modules. It   splits up to isogeny if and only if $M$ has a $\Lambda$-submodule $N'$  with  the natural map $N'  \rightarrow N$ an isogeny.
\end{lemma}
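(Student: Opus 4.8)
The plan is to prove Lemma \ref{useful1} by the standard linear-algebra reinterpretation of ``splitting up to isogeny'' as the existence of a $\Lambda\otimes\Q_p$-complement to $K\otimes\Q_p$ inside $M\otimes\Q_p$, and then to translate such a complement back into an honest $\Lambda$-submodule $N'$ of $M$. Throughout I will use freely that $\Q_p$ is flat over $\Z_p$, so that $-\otimes\Q_p$ is exact and commutes with the formation of submodules, kernels and cokernels, and that for a compact finitely generated torsion $\Lambda$-module $X$ the module $X\otimes\Q_p$ is a finite-dimensional $\Q_p$-vector space carrying a $\Lambda\otimes\Q_p$-action (this is exactly the setting already used around Theorem \ref{iwa}(iii) and Definition \ref{isogeny}).

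For the forward direction, suppose the sequence splits up to isogeny, so there is a $\Lambda\otimes\Q_p$-section $\sigma\colon N\otimes\Q_p\to M\otimes\Q_p$ of the surjection $\pi\otimes\Q_p$. Let $V=\mathrm{image}(\sigma)\subseteq M\otimes\Q_p$, a $\Lambda\otimes\Q_p$-submodule mapping isomorphically onto $N\otimes\Q_p$. Now set $N'=$ the preimage of $V$ under the canonical map $M\to M\otimes\Q_p$; equivalently $N'=\{x\in M : x\otimes 1\in V\}$. This is a $\Lambda$-submodule of $M$ because $V$ is $\Lambda\otimes\Q_p$-stable and the map $M\to M\otimes\Q_p$ is $\Lambda$-linear. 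One checks that $N'\otimes\Q_p\to V$ is an isomorphism: surjectivity because $M\otimes\Q_p$ is generated over $\Q_p$ by the image of $M$, so any element of $V$ is a $\Q_p$-combination of images of elements of $M$, and after clearing denominators we may write $p^k v$ as the image of some $x\in M$ which then lies in $N'$; injectivity because the kernel of $N'\otimes\Q_p\to M\otimes\Q_p$ is contained in the kernel of $M\otimes\Q_p\to M\otimes\Q_p$, which is zero. Composing with the isomorphism $V\xrightarrow{\sim}N\otimes\Q_p$ shows $N'\otimes\Q_p\xrightarrow{\sim}N\otimes\Q_p$ is induced by $\pi$, which is precisely the statement that $N'\to N$ is an isogeny in the sense of Definition \ref{isogeny} (kernel and cokernel killed by $\otimes\Q_p$, hence torsion; they are finitely generated over $\Lambda$ since $\Lambda$ is Noetherian and $M$, $N$ are finitely generated, hence of bounded exponent).

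For the converse, suppose $M$ has a $\Lambda$-submodule $N'$ with $N'\to N$ an isogeny. Applying the exact functor $-\otimes\Q_p$, the composite $N'\otimes\Q_p\to M\otimes\Q_p\xrightarrow{\pi\otimes\Q_p}N\otimes\Q_p$ is an isomorphism (its kernel and cokernel are the isogeny's kernel and cokernel tensored with $\Q_p$, hence zero). Therefore $N'\otimes\Q_p$ is a $\Lambda\otimes\Q_p$-submodule of $M\otimes\Q_p$ projecting isomorphically onto $N\otimes\Q_p$; inverting this isomorphism gives a $\Lambda\otimes\Q_p$-linear section of $\pi\otimes\Q_p$, so the sequence $0\to K\otimes\Q_p\to M\otimes\Q_p\to N\otimes\Q_p\to 0$ splits, i.e.\ the original sequence splits up to isogeny.

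I do not expect a serious obstacle here; the lemma is genuinely formal once one has the flatness of $\Q_p/\Z_p$ and the finiteness properties recorded in Definition \ref{isogeny}. The only point requiring a little care — the ``main obstacle'' such as it is — is the surjectivity of $N'\otimes\Q_p\to V$ in the forward direction, i.e.\ verifying that pulling back a $\Q_p$-subspace of $M\otimes\Q_p$ along $M\to M\otimes\Q_p$ and then tensoring up recovers the subspace; this is where one uses that $M$ is finitely generated (so $M\otimes\Q_p$ is spanned by the images of finitely many elements of $M$) together with the clearing-denominators argument. Everything else is bookkeeping with exact sequences.
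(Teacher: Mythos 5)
Your proof is correct and is exactly the argument the paper has in mind: the authors give no proof at all, declaring the lemma ``a direct consequence of the definition,'' and your flatness-plus-clearing-denominators translation between $\Lambda\otimes\Q_p$-complements and honest $\Lambda$-submodules is the natural way to make that explicit. The only slip is in the stated reason for injectivity of $N'\otimes\Q_p\to M\otimes\Q_p$, which follows directly from flatness of $\Q_p$ over $\Z_p$ applied to the inclusion $N'\hookrightarrow M$; the phrase ``the kernel of $M\otimes\Q_p\to M\otimes\Q_p$'' looks like a typo.
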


The  following lemma is easily proved.

\begin{lemma}\label{obvious}
 Conjecture \ref{rational1}, in the case $Q=\{q_1,q_2\}$ with $q_i$ inert in 
 $F_\infty/F$, is true if and only if  there is a $\Z_p$-extension $L_Q$  of $\f_\infty$ that is Galois over $F$, ramified  at $q_1,q_2$ and unramified everywhere else, and on which complex conjugation acts by $-1$.   
 \end{lemma}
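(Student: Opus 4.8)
The plan is to unwind Conjecture \ref{rational1} for $Q=\{q_1,q_2\}$ in terms of the splitting criterion of Lemma \ref{useful1}, and then to recognize the relevant $\Lambda$-submodule concretely as the Galois group of a $\Z_p$-extension of the desired type. First I would apply Lemma \ref{useful1} to the exact sequence (\ref{exact})
$$
0 \rightarrow \Z_p(1) \rightarrow \X_{\infty,Q}^- \rightarrow \X_\infty^- \rightarrow 0
$$
(here $m-1=1$): Conjecture \ref{rational1} for this $Q$ holds if and only if $\X_{\infty,Q}^-$ contains a $\Lambda$-submodule $N'$ mapping isogenously onto $\X_\infty^-$, i.e.\ with $\X_{\infty,Q}^- / N'$ of bounded exponent. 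Equivalently, writing $\el_{\infty,Q}^-$ for the maximal odd abelian $p$-extension of $\f_\infty$ unramified outside $Q$ (as in Cor.~\ref{added}(2)), such an $N'$ corresponds to a subfield $L_Q \subseteq \el_{\infty,Q}^-$, Galois over $F$ with $\X_{\infty,Q}^-/\Gal(\el_{\infty,Q}^-/L_Q)=\Gal(L_Q/\f_\infty)$ of finite index $\Z_p$-rank equal to that of $\X_\infty^-$ minus... wait, one checks the ranks: $\X_{\infty,Q}^-$ has $\Z_p$-rank one more than $\X_\infty^-$ (by the exact sequence and the fact that $\X_\infty^-$ is $\Lambda$-torsion with the extra $\Z_p(1)$ contributing rank $1$ after $\otimes\Q_p$), so the complement $N'$ does not exist as a copy of $\X_\infty^-$; rather the correct reading is that the sequence $\otimes\Q_p$ splits iff the rank-one piece $\Q_p(1)$ splits off, iff $\X_{\infty,Q}^-\otimes\Q_p$ contains a $\Lambda$-stable line on which $\Gamma$ acts by $\chi$ and which maps to zero in $\X_\infty^-\otimes\Q_p$ — but that line is forced, being the image of $\Z_p(1)$. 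So I would instead use the \emph{other} direction of the splitting: the sequence $\otimes \Q_p$ splits iff there is a $\Lambda$-submodule of $\X_{\infty,Q}^-$, of $\Z_p$-corank one inside it and meeting $\Z_p(1)$ only in torsion, mapping isogenously onto $\X_\infty^-$.

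The key translation step is then: a $\Z_p$-extension $L_Q/\f_\infty$ that is Galois over $F$, odd (complex conjugation acting by $-1$), ramified exactly at $q_1, q_2$, is precisely the data of a quotient $\Z_p(1)\to\Z_p(1)$ of the inertia — i.e.\ the image of $I_Q=\Z_p(1)$ in $\Gal(L_Q/\f_\infty)$ is finite index, equivalently $L_Q/\f_\infty$ is almost totally ramified at $q_1,q_2$ and unramified elsewhere — together with the requirement that $L_Q$ and $\el_\infty^-$ be almost linearly disjoint over $\f_\infty$. Concretely: given such $L_Q$, the compositum $L_Q\el_\infty^-$ sits inside $\el_{\infty,Q}^-$ and $\Gal(L_Q\el_\infty^-/L_Q)$ is a $\Lambda$-submodule of $\X_{\infty,Q}^-$ mapping onto $\X_\infty^-$ with kernel the part of $\Z_p(1)$ dying in $L_Q$, which is finite; hence (\ref{exact}) splits up to isogeny. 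Conversely, if (\ref{exact}) $\otimes\Q_p$ splits, Lemma \ref{useful1} gives a $\Lambda$-submodule $N'\subseteq\X_{\infty,Q}^-$ isogenous to $\X_\infty^-$; its fixed field $L_Q:=(\el_{\infty,Q}^-)^{N'}$ is then a $\Z_p(1)$-extension of $\f_\infty$ (because $\X_{\infty,Q}^-/N'$ has $\Z_p$-rank one, is a $\Lambda$-module quotient of $\Z_p(1)$, hence is $\Z_p(1)$ up to finite groups), Galois over $F$, odd, and — since $N'\supseteq$ an isogenous copy of all of $\X_\infty^-$ — unramified outside $Q$; it is ramified at both $q_1$ and $q_2$ precisely because the image of each inertia subgroup $I_{q_i}$ in $\X_{\infty,Q}^-/N'$ is nontrivial, which I would verify using Lemma \ref{rami} (the $\Z_p$-rank of the inertia contribution at $Q$ is controlled by the closure of the relevant unit group in $\prod_{v\in Q}\widehat{F_v^*}$, and the hypothesis that $\Gal(\el_{\infty,Q}^-/\el_\infty^-)$ is exactly the inertia $I_Q$ forces both primes to ramify in any rank-one quotient).

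The main obstacle I expect is pinning down that the $\Z_p$-extension produced is ramified at \emph{both} $q_1$ and $q_2$ rather than at just one of them, and that it is unramified at every place above $p$ (not merely outside $Q\cup\{p\}$). For the first point one uses that $\el_{\infty,Q}^-$ is, by construction (Cor.~\ref{added}(2) and Lemma \ref{kay1}), generated over $\el_\infty^-$ by $I_Q\cong\Z_p(1)^{m-1}=\Z_p(1)$ coming \emph{diagonally} from the two residue-field contributions $\mu_{p^\infty}\times\mu_{p^\infty}$ modulo the diagonal $\mu_{p^\infty}$, so the unique (up to isogeny) rank-one ramified quotient is ramified at $q_1$ and $q_2$ with the \emph{same} (inverse) inertia — there is no rank-one odd quotient ramified at only one $q_i$, since that would require the residue-field contribution at the other prime to be unramified, contradicting the diagonal embedding once one mods out. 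For the second point, $\el_{\infty,Q}^-$ is by definition unramified outside the places above $Q$, in particular unramified above $p$, so any subextension $L_Q$ inherits this. Finally I would remark that the displayed criterion is stated for $Q$ with the $q_i$ inert in $F_\infty/F$ precisely so that Lemma \ref{kay1} identifies the inertia with $\mu_{p^{n+t}}$ cleanly and so that $\deg(\mathrm{Frob}_{q_i})\neq 0$, which is what makes the rank bookkeeping in the previous paragraph exact.
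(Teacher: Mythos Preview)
Your approach is essentially the same as the paper's: apply Lemma~\ref{useful1} to the sequence (\ref{exact}), and pass between a $\Lambda$-submodule $N'\subset \X_{\infty,Q}^-$ isogenous to $\X_\infty^-$ and its fixed field $L_Q$ inside $\el_{\infty,Q}^-$. The paper's proof is much terser: it declares the ``if'' direction obvious, and for ``only if'' simply takes $L_Q$ to be the field with $\Gal(L_Q/\f_\infty)=(\X_{\infty,Q}^-/N')/\{p\text{-torsion}\}$, without spelling out (as you do) why $L_Q$ is ramified at \emph{both} $q_i$; your observation that each factor $\mu_{p^{n+t}}\times\{1\}$ and $\{1\}\times\mu_{p^{n+t}}$ already maps isomorphically onto $\kay_{n,Q}^-$ (Lemma~\ref{kay1} with $m=2$) is the right justification.

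One small slip: in your ``if'' direction you write that $\Gal(L_Q\el_\infty^-/L_Q)$ is a $\Lambda$-submodule of $\X_{\infty,Q}^-$, but it is not---it is a quotient of $\Gal(\el_{\infty,Q}^-/L_Q)$, and it is the latter you want as your $N'$. With $N'=\Gal(\el_{\infty,Q}^-/L_Q)$ the argument goes through exactly as you indicate: $N'\cap I_Q$ is finite (since $L_Q/\f_\infty$ is almost totally ramified at $Q$) and $\X_\infty^-/\mathrm{image}(N')\cong \X_{\infty,Q}^-/(N'+I_Q)$ is finite, so $N'\to\X_\infty^-$ is an isogeny. The initial paragraph of rank-counting confusion can simply be deleted; Lemma~\ref{useful1} applied verbatim is all you need.
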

  Note that  $\Gamma$ acts on ${\rm Gal}(L_Q/\f_\infty)$ by the $p$-adic cyclotomic character as the $q_i$ are inert in $F_\infty/F$, and thus $L_Q$ is a $\Z_p$-Kummer extension of $F$, with $L_Q/\f_\infty$ unramified outside  the primes above $Q$, and ramified at all the primes in $Q$.   Leopoldt's conjecture  predicts that there is a unique such extension.

\begin{proof} Only the ``only if''  direction needs proof. Assume that the conjecture is true.  Then
by the previous lemma  we get  $X\subset \mathcal{X}_{\infty, Q} ^-$ a $\Lambda$-submodule
with $X\rightarrow \mathcal{X}_{\infty} ^-$ having kernel and cokernel killed 
by a power of $p$. We define $L_Q $  as the subfield of $\mathcal{L}_{\infty, Q}^-$ which under the Galois correspondence  is such that it is Galois over $\mathcal{F}_\infty$, and  its Galois group over $\mathcal{F}_\infty$ is the quotient of $\mathcal{X}_{\infty, Q}^-/X$
by its $p$-power torsion.
\end{proof}

 \begin{theorem}\label{M_Q}
  Consider $Q=\{q_1,q_2\}$ a  tuple of primes of $F$, inert in $F_\infty/F$. Then the exact sequence in Conjecture \ref{rational1} splits if and only if the degree $ 0$  Frobenius submodule $M_Q$ of $Y_\infty/(\gamma-1)Y_\infty$ is  a finite group.
\end{theorem}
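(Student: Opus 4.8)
The plan is to translate both sides of the claimed equivalence into statements about the $\Lambda$-module $\mathcal{X}_{\infty,Q}^-$ and the extension~(\ref{exact}), and to exploit the Iwasawa pairing (Theorem~\ref{iwa}) together with the reciprocity-type identification implicit in the construction of $N_Q$. Recall that by Lemma~\ref{obvious}, the exact sequence~(\ref{exact})$\otimes\Q_p$ of Conjecture~\ref{rational1} splits if and only if there is a $\Z_p$-extension $L_Q$ of $\f_\infty$, Galois over $F$, odd, ramified exactly at the primes above $q_1,q_2$. On the other side, by Proposition~\ref{equivalent} and the discussion following it, $M_Q$ is finite if and only if the relevant piece of $Y_\infty/(\gamma-1)Y_\infty$ — the $\Z_p$-span of $\mathrm{Frob}_{q_1},\mathrm{Frob}_{q_2}$ of degree $0$ — is finite; and this $\Z_p$-line $M_Q$ is, by construction, the ``Frobenius-divisor'' side of the reciprocity conjecture. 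So the first step is to rewrite ``$M_Q$ finite'' as ``the class $[c_\gamma]\in H^1(\Gamma,Y_\infty)$ generates a finite subgroup'', which is the cohomological incarnation of the splitting of~(\ref{exact1}).

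First I would show directly, \emph{without} invoking Conjecture~\ref{rec}, that the $\Z_p$-line $N_Q\subset H^1(\Gamma,Y_\infty)$ attached to the extension~(\ref{exact1}) — equivalently to~(\ref{exact}) via the Iwasawa isomorphism $Y_\infty=\mathrm{Hom}_{\Z_p}(\EA_\infty^-,\Q_p/\Z_p(1))$ — is finite if and only if~(\ref{exact})$\otimes\Q_p$ splits. Indeed, $N_Q$ is generated by $\delta(\mathrm{id})$ where $\delta$ is the connecting map in the $\Gamma$-cohomology of $0\to\mathrm{Hom}(\EA_\infty^-,\Q_p/\Z_p(1))\to\mathrm{Hom}(\EA_\infty^-,\EA_{\infty,Q}^-)\to\mathrm{Hom}(\EA_\infty^-,\EA_\infty^-)\to 0$, and~(\ref{exact1}) splits as $\Lambda$-modules precisely when $[c_\gamma]=0$. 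Passing to $\otimes\Q_p$ and using that $H^1(\Gamma,-\otimes\Q_p)=H^1(\Gamma,-)\otimes\Q_p$ (as noted before Proposition~\ref{equivalent}), finiteness of $N_Q$ is the same as $[c_\gamma]\otimes 1=0$, which by Lemma~\ref{generality} applied rationally is equivalent to the existence of a $\Lambda\otimes\Q_p$-splitting of the Hom-dual sequence, hence — by the perfect pairing of Theorem~\ref{iwa}(iii), which is $\Gamma$-equivariant and identifies $Y_\infty\otimes\Q_p$ with $\mathrm{Hom}_{\Lambda\otimes\Q_p}(\X_\infty^-\otimes\Q_p,\Q_p(1))$ — of the sequence~(\ref{exact})$\otimes\Q_p$ itself. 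The key technical point here is that dualizing into $\Q_p(1)$ sends the extension class of~(\ref{exact})$\otimes\Q_p$ to the extension class of its dual, and is exact because $\X_\infty^-\otimes\Q_p$ is a finite-dimensional $\Q_p$-vector space and $\Q_p(1)$ is injective as a $\Q_p$-module; I would spell this out via Lemma~\ref{opposite} if a non-canonical identification suffices.

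The remaining and, I expect, the \emph{main obstacle} is to prove the Reciprocity Conjecture~\ref{rec} in the weak, ``up to finite index'' form that is actually needed: namely that $N_Q$ is finite \emph{if and only if} $M_Q$ is finite (the full statement, that $N_Q$ maps isomorphically onto $M_Q$, is stated as an unproven conjecture, so the theorem must only require the coarser equivalence). Here the strategy is to compute $M_Q$ intrinsically: $\mathrm{Frob}_{q_i}$ lives in $Y'_\infty/(\gamma-1)Y_\infty$, the Galois group of the maximal abelian $p$-extension of $F$ unramified outside $p$; its image under the Iwasawa pairing should be read off from how $q_i$ splits in Kummer extensions of $\f_\infty$, which is exactly the content of Proposition~\ref{ramification} and Lemma~\ref{rami}. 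Concretely, $M_Q$ finite means that every odd $\Z_p$-extension of $\f_\infty$ unramified outside $p$ is ``seen'' by $\mathrm{Frob}_{q_1},\mathrm{Frob}_{q_2}$ only through torsion — equivalently, by Lemma~\ref{rami} and Proposition~\ref{Q}, that the $Q$-unit group contributes no new $\Z_p$-rank to ramification at $q_1,q_2$ beyond what the global units already give; and this in turn is equivalent, by a diagram chase linking the localisation maps $\mathrm{loc}_{q_i}$ on $\widehat{E_Q}$ with the inertia quotients $I_v$ of Lemma~\ref{rami}, to the non-existence of an odd $\Z_p$-extension $L_Q/\f_\infty$ ramified exactly at the primes above $q_1,q_2$ — which is precisely the condition in Lemma~\ref{obvious}. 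I would organise this last part as: (i) identify $\mathcal{X}_{\infty,Q}^-$ with a Galois group of a Kummer-type extension built from $\widehat{E_Q}$ using the isomorphism of Corollary~\ref{added}(2); (ii) apply Lemma~\ref{rami} to compute the $\Z_p$-rank of $I_Q$; (iii) deduce that the splitting of~(\ref{exact})$\otimes\Q_p$, the existence of $L_Q$, and the finiteness of $M_Q$ are all equivalent, closing the loop with the cohomological reformulation of the first two paragraphs.
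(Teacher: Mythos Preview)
Your proposal has a structural gap. The route you take in the first two paragraphs --- show that $N_Q$ is finite iff (\ref{exact})$\otimes\Q_p$ splits, then show that $N_Q$ is finite iff $M_Q$ is finite --- requires precisely the ``weak reciprocity'' statement that $N_Q$ and $M_Q$ have the same $\Z_p$-rank. But in the paper this is \emph{not} available: Conjecture~\ref{rec} is unproven, and in fact the logical flow goes the other way --- Corollary~\ref{evidence}, which gives partial evidence for the reciprocity conjecture, is \emph{deduced from} Theorem~\ref{M_Q}, not used to prove it. Your third paragraph acknowledges this and gestures toward a direct argument, but the sketch there does not land: $\X_{\infty,Q}^-$ is not itself a Kummer extension of anything built from $\widehat{E_Q}$, and the phrase ``$M_Q$ finite means every odd $\Z_p$-extension of $\f_\infty$ unramified outside $p$ is seen by $\mathrm{Frob}_{q_i}$ only through torsion'' conflates the $F$-level (where $M_Q$ lives, inside $Y_\infty/(\gamma-1)Y_\infty$) with the $\f_\infty$-level.

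The paper's proof bypasses $N_Q$ and the Iwasawa pairing entirely; it stays on the $F$-side throughout. Via Lemma~\ref{obvious} and the Kummer theory of \S\ref{kummertheory} (Propositions~\ref{kummerprop}, \ref{ramification}, \ref{Q}), the splitting of (\ref{exact})$\otimes\Q_p$ is equivalent to the existence of a non-torsion $\alpha\in\widehat{E_Q}\subset\widehat{F^*}$ with $v_{q_i}(\alpha)\neq 0$ for $i=1,2$ and $\mathrm{loc}_p(\alpha)$ torsion (one may take it trivial). The link to $M_Q$ then comes from global class field theory and the exact sequence $0\to U^1_F/\overline{E^1_F}\to Y'_\infty/(\gamma-1)Y_\infty\to C\to 0$: for such an $\alpha$ the Artin reciprocity law, read off place by place, gives the relation $\mathrm{Frob}_{q_1}^{v_{q_1}(\alpha)}\mathrm{Frob}_{q_2}^{v_{q_2}(\alpha)}=1$ in $Y'_\infty/(\gamma-1)Y_\infty$, forcing $M_Q$ finite. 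Conversely, if $M_Q$ is finite one starts from generators $\alpha_i\in F^*$ of powers of $q_i$, uses the finiteness to find $a_1,a_2\in\Z_p\setminus\{0\}$ and $\epsilon\in\widehat{E^1_F}$ with $\alpha_1^{a_1}\alpha_2^{a_2}\epsilon$ trivial in $U^1_F$, and this product is the desired $\alpha$. This is the missing mechanism in your sketch: the bridge between $M_Q$ and the Kummer-theoretic $\alpha$ is not a diagram chase through inertia groups over $\f_\infty$, but the idelic reciprocity law over $F$ itself.
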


\begin{proof} 
  Consider the 1-units $U^1_F$ of $\Pi_{v|p} F_v^*$, and the subgroup $\overline{ E^1_F}$
  the closure of the global units $E_F^1$ that are 1 mod $v$ for all $v|p$.  We note the standard exact sequence from class field theory:
  \begin{equation}\label{cft}
  0 \rightarrow U^1_F/\overline{ E^1_F} \rightarrow Y'_\infty/(\gamma-1)Y_\infty \rightarrow C \rightarrow 0,
  \end{equation} where $Y'_\infty/(\gamma-1)Y_\infty$ is the Galois group of the maximal abelian $p$-extension of $F$ unramified outside $p$, 
  where $C$ is the Sylow $p$-subgroup of the ideal class group of $F$ (cf. Chapter 13 of \cite{Washington}).

 By  Lemma \ref{obvious} we  have to show that the existence of an $L_Q$ as in its statement is equivalent to $M_Q$ being finite. Recall that $\widehat{F_p^*}$ is the product 
$\prod_{v\mid p} \widehat{F^*_v}$ for $v$ the primes of $F$ above $p$ and 
we have a natural localisation map $\mathrm{loc}_p : \widehat{F^*}\rightarrow      
\widehat{F_p^*}$.
 By the results of \S \ref{kummertheory},  the existence of a $\Z_p$-Kummer extension $L_Q$ of $F$, such that $L_Q/\f_\infty$ is ramified precisely at all the primes above $Q$,  is equivalent to the existence of an element $\alpha$ of $\widehat{E_Q} \subset \widehat{F^*}$   such that $v_t(\alpha) \neq 0$ for $t=q_1,q_2$, and  $\mathrm{loc}_p (\alpha )$ is torsion. 
 By replacing $\alpha$ by a power of $\alpha$, we can suppose 
 that $\mathrm{loc}_p (\alpha )$ is trivial.


Suppose that the exact sequence of Conjecture 5.9. splits.
Then we get an $\alpha$ as above. Its image by the map
 $\widehat{E_Q}\rightarrow U^1_F/\overline{ E^1_F} \rightarrow 
 Y'_\infty/(\gamma-1)Y_\infty$ is $\mathrm{Frob}_{q_1}^{a_1}
\mathrm{Frob}_{q_2}^{a_2}$ for  $a_i \in \Z_p$.
It is trivial as $\mathrm{loc}_p (\alpha)$ is trivial. 
As  $v_t(\alpha) \neq 0$ for $t=q_1,q_2$, we get  that $a_i \neq 0$ and this produces a non-trivial $\Zp$-linear 
relation  between $\mathrm{Frob}_{q_1}$ and $\mathrm{Frob}_{q_2}$, hence
$M_Q$ is finite.

Conversely suppose that $M_Q$ is finite. Let, for $i=1,2$, $\alpha_i$ be elements
of $F^*$ which generates a power of the ideal $q_i$. As $M_Q$ is finite, the images 
of $\alpha_1$ and $\alpha_2 $ in $U^1_F/\overline{ E^1_F}$ are $\Zp$-linearly independent. It follows that 
there exists $a_1$ and $a_2$ non zero elements of $\Zp$ and $\alpha_3\in \overline{ E^1_F}$
such that $\alpha_1^{a_1} \alpha_2 ^{a_2} \alpha_3 =1$ in $U^1_F$. Lifting $\alpha_3$
to  $\epsilon \in \widehat{E_F^1}$, we get an element 
$\alpha :=  \alpha_1^{a_1} \alpha_2^{a_2}  \epsilon\in
\widehat{F^*}$. It satisfies the required properties : $v_{q_i}(\alpha)\not= 0$
and $\mathrm{loc}_p (\alpha )=1$. The theorem follows.

  
  \end{proof}

 \begin{cor}\label{implication1}
  Conjecture \ref{rational1} is true for all tuples of primes $Q=\{q_1,q_2\}$ which are inert in $F_\infty/F$ if and only if Leopoldt's conjecture is true.
 \end{cor}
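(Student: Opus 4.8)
The plan is to deduce the corollary formally from Theorem \ref{M_Q}, Corollary \ref{needed} and Proposition \ref{equivalent}; no input beyond these three is needed, so the argument is essentially bookkeeping on top of Theorem \ref{M_Q}.

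First I would handle the implication ``Leopoldt $\Rightarrow$ Conjecture \ref{rational1}''. Assume Leopoldt's conjecture holds for $F$ and $p$. By Proposition \ref{equivalent} the group $Y_\infty/(\gamma-1)Y_\infty$ is then finite. Fix any $Q=\{q_1,q_2\}$ with both $q_i$ inert in $F_\infty/F$. The degree $0$ Frobenius submodule $M_Q$ is a $\Z_p$-submodule of the finite group $Y_\infty/(\gamma-1)Y_\infty$, hence itself finite; Theorem \ref{M_Q} then gives that the sequence $(\ref{exact})\otimes\Q_p$ splits, i.e. Conjecture \ref{rational1} holds for this $Q$. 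As $Q$ was an arbitrary admissible pair, this direction is complete.

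For the converse, assume Conjecture \ref{rational1} holds for every $Q=\{q_1,q_2\}$ with $q_1,q_2$ inert in $F_\infty/F$. By Theorem \ref{M_Q} each such $M_Q$ is finite. By Corollary \ref{needed} the family of these $M_Q$ spans $Y_\infty/(\gamma-1)Y_\infty$ as a $\Z_p$-module, and this module is finitely generated over $\Z_p$ since $Y_\infty$ is a finitely generated $\Lambda$-module. Hence finitely many of the $M_Q$ already span it; a sum of finitely many finite subgroups of an abelian group is finite, being a quotient of their finite direct product. Therefore $Y_\infty/(\gamma-1)Y_\infty$ is finite, and Proposition \ref{equivalent} yields Leopoldt's conjecture.

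The only subtlety I would be careful to state in the write-up is the use of finite generation to pass to a finite spanning subfamily of the $M_Q$: there are infinitely many admissible pairs $Q$, and an infinite union of finite groups need not be finite, so this reduction is not optional. Apart from that, there is no real obstacle here — all the substance lies in Theorem \ref{M_Q} (and, through it, in Lemma \ref{obvious} and the Kummer theory of \S \ref{kummertheory}), which we are entitled to assume.
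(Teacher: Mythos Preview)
Your proof is correct and follows the same route as the paper: the paper's own argument invokes Theorem \ref{M_Q} and Corollary \ref{needed} for the nontrivial direction, and treats the Leopoldt $\Rightarrow$ splitting direction as clear (the remark after the corollary also notes it follows from the characteristic polynomials of $\Q_p(1)^{m-1}$ and $\X_\infty^-\otimes\Q_p$ being coprime). The only difference is that you make explicit the reduction, via finite generation of $Y_\infty/(\gamma-1)Y_\infty$ over $\Z_p$, to finitely many $M_Q$'s before concluding finiteness; the paper leaves this step implicit.
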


 \begin{proof}
  We need only prove that the truth of Conjecture \ref{rational1} for tuples $Q=\{q_1,q_2\}$ inert in $F_\infty/F$ implies Leopoldt's conjecture. For this   we note (cf. Cor. \ref{needed})  that the $M_Q$'s span  the finitely generated $\Z_p$-module $Y_\infty/(\gamma-1)Y_\infty$ for such $Q$. By the  theorem, Conjecture \ref{rational1} implies that $M_Q$ is of finite order.
 \end {proof}
 
\it Remark. \rm  
The fact that Leopoldt's conjecture implies the splitting of the exact sequence of conjecture
\ref{rational1} also follows  as then the $\Lambda$-modules 
$ \Q_p(1)^{m-1}$ and  $X_\infty^- \otimes \Q_p $ have characteristic polynomials which 
are prime to each other.

\begin{prop}\label{splittingatp}
 
  Conjecture \ref{rational2} is equivalent to Leopoldt's conjecture.
\end{prop}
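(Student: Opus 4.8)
The plan is to argue exactly as in the proof of Theorem~\ref{M_Q} and Corollary~\ref{implication1}, with the primes of $F$ above $p$ playing the role of the auxiliary set $Q$ and a relation among local units at $p$ playing the role of the relation among the Frobenius elements. The first step is to reformulate Conjecture~\ref{rational2} using Definition~\ref{isogeny} and Lemma~\ref{useful1}: the splitting of (\ref{atp}) is equivalent to the existence of a $\Lambda$-submodule $N'\subseteq\X_{\infty,p}^-$ for which the natural map $N'\to\X_\infty^-$ is an isogeny.

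Next I would set up a Galois-theoretic dictionary. Writing $\X_{\infty,p}^-=\mathrm{Gal}(N'_\infty/\f_\infty)$ and noting that $N'_\infty$ is unramified outside $p$ so that $\el_\infty^-\subseteq N'_\infty$ with $I_p'=\mathrm{Gal}(N'_\infty/\el_\infty^-)$, a submodule $N'$ as above is $\mathrm{Gal}(N'_\infty/L)$ for an intermediate field $\f_\infty\subseteq L\subseteq N'_\infty$ Galois over $F$, and $\mathrm{Gal}(L/\f_\infty)=\X_{\infty,p}^-/N'$ is then isogenous to $I_p'$. Since $I_p'$ is finitely generated over $\Z_p$ (by the proof of Corollary~\ref{imp}, via Lemma~\ref{iwasawa1}) with $I_p'\otimes\Q_p\cong\Q_p(1)^{[F:\Q]+s-1}$, the group $\mathrm{Gal}(L/\f_\infty)$ is finitely generated over $\Z_p$ with $\gamma$ acting through $\chi$ modulo finite torsion; after replacing $L$ by the fixed field of that torsion (equivalently, enlarging $N'$ by a finite submodule, which preserves the isogeny property) I may assume $\mathrm{Gal}(L/\f_\infty)\cong\Z_p(1)^{[F:\Q]+s-1}$, so that $L$ is an odd Kummer extension of $\f_\infty$ of $\Z_p$-rank $[F:\Q]+s-1$, unramified outside $p$. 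The $\Z_p$-ranks then match exactly, $\dim_{\Q_p}(N'\otimes\Q_p)=\dim_{\Q_p}(\X_\infty^-\otimes\Q_p)$, so $N'\otimes\Q_p\to\X_\infty^-\otimes\Q_p$ is an isomorphism as soon as it is surjective, and surjectivity holds precisely when $L\cap\el_\infty^-$ is finite over $\f_\infty$, i.e.\ when $L$ is almost totally ramified at the primes above $p$. Conversely, any odd Kummer extension $L$ of $\f_\infty$ that is unramified outside $p$, of $\Z_p$-rank $[F:\Q]+s-1$, and almost totally ramified at $p$ is contained in $N'_\infty$ and gives back such an $N'$.

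Then I would feed this into the Kummer theory of \S\ref{kummertheory}. By Propositions~\ref{kummerprop} and~\ref{Q}, an $L$ as above is of the form $F(\mu_{p^\infty},T^{1/p^\infty})$ for a finitely generated subgroup $T$ of the group $E_S$ of $S$-units, where $S=\{\wp_1,\dots,\wp_s\}$ is the set of primes of $F$ above $p$, and the closure of $T$ in $\widehat{F^*}$ has $\Z_p$-rank $[F:\Q]+s-1$; and by Lemma~\ref{rami}, ``$L$ almost totally ramified at $p$'' says exactly that $\mathrm{loc}_p\colon T\to\widehat{F_p^*}$ has torsion kernel. Since $\widehat{E_S}$ is free of $\Z_p$-rank $[F:\Q]+s-1$ ($p$ being odd, the torsion $\{\pm1\}$ of $E_S$ disappears), such a $T$ has finite index in $\widehat{E_S}$, so an $L$ of the required kind exists if and only if $\mathrm{loc}_p\colon\widehat{E_S}\to\widehat{F_p^*}$ is injective. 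Finally I would invoke the standard fact that $\mathrm{loc}_p\colon\widehat{E_S}\to\widehat{F_p^*}$ is injective if and only if Leopoldt's Conjecture~\ref{Leopoldt} holds for $F$ and $p$: the valuations at the $\wp_j$ detect the ``divisor'' part of $\widehat{E_S}$, so the kernel of $\mathrm{loc}_p$ maps isomorphically onto the kernel of the map $\widehat{E^1_F}\to U^1_F$, whose $\Z_p$-rank is by definition the Leopoldt defect $\delta_{F,p}$ --- alternatively one reads this off the exact sequence (\ref{cft}) together with Proposition~\ref{equivalent}. Chaining these equivalences yields the proposition.

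The step I expect to require the most care is the Galois-theoretic dictionary of the second paragraph: matching ``splitting up to isogeny'' of the $\Lambda$-sequence (\ref{atp}) with the existence of an honest radical extension $F_T$ that is as ramified as possible at $p$, and in particular keeping the finite kernels and cokernels under control so that the $\Z_p$-rank of $T$ comes out to be exactly $[F:\Q]+s-1$ (this is the analogue of the passage, in the proof of Theorem~\ref{M_Q}, between the finiteness of $M_Q$ and the $\Z_p$-linear relation among the $\mathrm{Frob}_{q_i}$). Once the dictionary is in place, the remaining steps are just \S\ref{kummertheory} and the standard reformulations of Leopoldt's conjecture. I would also note, in the spirit of the remark after Corollary~\ref{implication1}, that the implication Leopoldt $\Rightarrow$ Conjecture~\ref{rational2} can be obtained more directly: by Proposition~\ref{equivalent} and the Iwasawa duality of Theorem~\ref{iwa}, Leopoldt is equivalent to $\X_\infty^-\otimes\Q_p$ having characteristic polynomial coprime to that of $\Q_p(1)$, and coprimality of the characteristic polynomials of $\Q_p(1)^{[F:\Q]+s-1}$ and $\X_\infty^-\otimes\Q_p$ forces the sequence (\ref{atp}) to split.
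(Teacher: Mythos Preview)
Your proposal is correct and follows essentially the same route as the paper: both translate the splitting of (\ref{atp}) into the statement that the Kummer extension of $\f_\infty$ built from the $p$-units $E'_F=E_S$ is almost totally ramified at $p$, and then identify that with Leopoldt via Lemma~\ref{rami} and the rank of $\overline{E_F^1}$ in $U_F^1$. The paper is terser---it fixes the single candidate $\el=\f_\infty({E'_F}^{1/p^\infty})$ from the start and asserts the equivalence ``(\ref{atp}) splits $\Leftrightarrow$ $\el$ is almost totally ramified at $p$'' as a claim---whereas you supply the Galois dictionary (your second paragraph) that justifies this claim and shows any competing $L$ must have finite index in $\el$; your final remark on coprime characteristic polynomials also matches the paper's ``Leopoldt $\Rightarrow$ splitting'' direction via the finiteness of $\X_\infty^-/(\gamma-\chi(\gamma))$.
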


\begin{proof}
 Consider $E'_F$ the group of $p$-units of $F$. By the unit theorem it has $\Z$-rank $[F:\Q]+s-1$.
  We claim that (\ref{atp}) splits if and only if
   the $p^\infty$-Kummer extension 
  $\el=\f_\infty({E'_F}^{1/p^\infty})$ of $\f_\infty$, whose Galois group has $\Z_p$-rank $[F:\Q]+s-1$ (see the results of \S \ref{kummertheory} for instance), is   almost totally ramified at $p$. If Leopoldt's conjecture is true,  which is equivalent to $\X_\infty^-/(\gamma-\chi(\gamma))$ being finite, then  as the action of $\Gamma$ on ${\rm Gal}(\el/\f_\infty)$ is via the $p$-adic cyclotomic character, $\el$  is almost linearly disjoint from  $\el_\infty^-$ over $\f_\infty$, which implies that $\el/\f_\infty$ is almost totally ramified above $p$. On the other hand if $\el/\f_\infty$ is almost totally ramified at $p$, we  may deduce   that the $p$-adic  completion of the units $E_F$ of $F$  in $U_F^1$ has rank $[F:\Q]-1$. This is another form of the Leopoldt conjecture. For the  deduction we 
  use  Proposition \ref{ramification} and Lemma \ref{rami}. Namely we see that the $\Z_p$-rank of the subgroup generated by  the inertia groups above $p$
  in ${\rm Gal}(\el/\f_\infty)$  $=(\Z_p$-rank of the submodule $\overline{ E_F^1}$ of $U_F^1$ )$+s$.
\end{proof}

\section{Some evidence for the reciprocity conjecture }

Using the Kummer theory of \S \ref{kummertheory}, when (\ref{exact}) splits  after tensoring with $\Q_p$ as $\Lambda$-modules,  we measure precisely its failure to split over $\Z_p$. 
This then lends  support to our reciprocity conjecture. 
  
  We define $G$ to be the group  $Y'_\infty/(\gamma-1)Y_\infty$.
Hence the exact sequence  (\ref{cft}) becomes : 
  \begin{equation}\label{cftbis}
  0 \rightarrow U^1_F/\overline{ E^1_F} \rightarrow G
 \rightarrow C \rightarrow 0,
  \end{equation}
We define the quotient $G'$ of $G$
 by the image in $U_F^1/\overline{E_F^1}$ of the roots of unity, denoted by $\mu$, of $p$-power order of the product $F_p^*$ of the multiplicative groups of completions of $F$
at primes above $p$. 
We consider as before $Q=\{q_1,q_2\}$ with $q_1$ and $q_2$ distinct primes 
not above $p$ and  inert in $F_\infty/F$. 

\begin{theorem}\label{intersection}
  Assume that the order of the degree 0  Frobenius module $M_Q$ is finite. It is equivalent to assuming that a Kummer $\Z_p$-extension $L_Q/F$ exists with $L_Q/\f_\infty$ unramified at a place if and only if it does not lie above a prime in $Q$  (cf. Lemma \ref{obvious} and  Theorem  \ref{M_Q}).  Then  for any such   $L_Q$, the degree $[L_Q \cap \el_\infty^-:\f_\infty]$
  is divisible by the order $ m_Q$ of the image of  $M_Q$ in $G'$. Furthermore there exists 
  such an $L_Q$  with $[L_Q \cap \el_\infty^-:\f_\infty]=m_Q$.
\end{theorem}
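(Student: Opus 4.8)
The plan is to translate the statement entirely into Kummer theory, using the dictionary set up in \S\ref{kummertheory} and the proof of Theorem \ref{M_Q}. By Lemma \ref{obvious} and Theorem \ref{M_Q}, the finiteness of $M_Q$ is equivalent to the existence of an element $\alpha \in \widehat{E_Q}\subset \widehat{F^*}$ with $v_{q_i}(\alpha)\neq 0$ for $i=1,2$ and $\mathrm{loc}_p(\alpha)=1$, and such an $\alpha$ gives the Kummer $\Z_p$-extension $L_Q = F_{\bar\alpha}$. The intersection $L_Q\cap \el_\infty^-$ is an everywhere-unramified odd abelian $p$-extension of $\f_\infty$ inside $L_Q$; since $\mathrm{Gal}(L_Q/\f_\infty)\simeq \Z_p$, this intersection is the layer of degree $p^e$ where $p^e = [L_Q\cap\el_\infty^-:\f_\infty]$, and $L_Q\cap\el_\infty^-$ is cut out by the class of $\bar\alpha$ in the (minus part of the) ideal class group, i.e. by the image of $\alpha$ under the Artin-type map $\widehat{E_Q}\to G = Y'_\infty/(\gamma-1)Y_\infty$. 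Concretely, the image of $\alpha$ in $G$ is $\mathrm{Frob}_{q_1}^{a_1}\mathrm{Frob}_{q_2}^{a_2}$ where $a_i = v_{q_i}(\alpha)$ (up to sign/normalization), and this image lands in the subgroup $U^1_F/\overline{E^1_F}$ precisely because $\mathrm{loc}_p(\alpha)=1$; its order controls how much of $L_Q$ is unramified everywhere, since ramification of $L_Q/\f_\infty$ at a prime not above $p$ or $Q$ is detected by $v$ of $\alpha$ there (all zero), and at $p$ by $\mathrm{loc}_p$ (trivial).

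The first step is to make precise the identification: $L_Q\cap\el_\infty^-$ corresponds, under the Kummer dictionary, to the quotient of $\Z_p\simeq\mathrm{Gal}(L_Q/\f_\infty)$ by the closure of the inertia subgroups at all finite places; and since $L_Q/\f_\infty$ is unramified outside $Q$ and the inertia at $q_i$ is generated (under the Artin map restricted to the local unit part, via Proposition \ref{ramification} and Lemma \ref{rami}) by the local symbol of $\alpha$ at $q_i$, the quotient is exactly $G'$-image or rather $G$-image governed data. More carefully: the degree $[L_Q\cap\el_\infty^-:\f_\infty]$ equals the order of the image of the class $[\bar\alpha]$ in $\X_\infty^- = \mathrm{Gal}(\el_\infty^-/\f_\infty)$, which by class field theory / the exact sequence (\ref{cftbis}) is computed from the image of $\alpha$ in $G$ modulo the contribution already accounted for by inertia at $p$. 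Passing to $G'$ — the quotient of $G$ by the $p$-power roots of unity $\mu$ of $F_p^*$ — is exactly the adjustment needed, because modifying $\alpha$ by an element of $\widehat{E^1_F}$ (which is the freedom we have in choosing $\alpha$, by the proof of Theorem \ref{M_Q}) changes its image in $G$ precisely by elements coming from $\overline{E^1_F}$, while the ambiguity coming from $\mu\subset F_p^*$ is what $G'$ kills. So I expect: the image $m_Q\cdot(\text{something})$ — where $m_Q$ is the order of the image of $M_Q$ in $G'$ — always divides $p^e = [L_Q\cap\el_\infty^-:\f_\infty]$, giving the divisibility assertion.

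For the second (sharpness) assertion, I would run the converse direction of the proof of Theorem \ref{M_Q} more carefully: starting from $\alpha_1,\alpha_2\in F^*$ generating powers of $q_1,q_2$, finiteness of $M_Q$ (i.e. of its image, a fortiori of its image in $G'$ of order $m_Q$) produces a relation $\alpha_1^{a_1}\alpha_2^{a_2}\alpha_3 = 1$ in $U^1_F$ with $\alpha_3\in\overline{E^1_F}$; the minimal such relation (minimal $p$-adic valuation of $(a_1,a_2)$ as a $\Z_p$-submodule of $\Z_p$, modulo what is forced) corresponds to choosing $\alpha$ so that its image in $G'$ already has order exactly $m_Q$ and no more, hence $L_Q=F_{\bar\alpha}$ achieves $[L_Q\cap\el_\infty^-:\f_\infty]=m_Q$. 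The key point is that we must be able to choose $\epsilon\in\widehat{E^1_F}$ lifting $\alpha_3$ so that the resulting $\alpha = \alpha_1^{a_1}\alpha_2^{a_2}\epsilon$ has class in $\X_\infty^-$ of order exactly $m_Q$; this amounts to checking that the obstruction lives in the part killed by passing from $G$ to $G'$, i.e. in $\mu$, which is the content of the definition of $G'$.

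\textbf{Main obstacle.} The hard part will be the bookkeeping relating three group-theoretic quantities: the order of $[\bar\alpha]$ in $\X_\infty^-=\mathrm{Gal}(\el_\infty^-/\f_\infty)$, the order of $M_Q$'s image in $G$, and the order $m_Q$ in $G'$ — and in particular pinning down exactly why the roots of unity $\mu\subset F_p^*$ (and not some larger or smaller subgroup) are the correct thing to quotient by to get a clean equality rather than just a divisibility up to a bounded factor. This requires carefully tracking the map $\widehat{E_Q}\to U^1_F/\overline{E^1_F}\to G\to G'$ together with the constraint $\mathrm{loc}_p(\alpha)=1$ versus $\mathrm{loc}_p(\alpha)\in\mu$, and using that the only indeterminacy in the Kummer description of $L_Q$ (Proposition \ref{kummerprop} remark: $F_{\bar\alpha}=F_{\bar\beta}$ iff $\bar\alpha^{a}=\bar\beta^{b}$ for nonzero $a,b\in\Z_p$) is by $\Z_p$-powers, which interacts with the orders in a way that must be shown not to spoil sharpness.
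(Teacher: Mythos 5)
Your overall plan — translate via Kummer theory to an element $\alpha\in\widehat{E_Q}$ with $L_Q=F_{\bar\alpha}$, then track both the valuations $v_{q_i}(\alpha)$ and the image of $\alpha$ under $\widehat{E_Q}\to U^1_F/\overline{E^1_F}\to G\to G'$ — is the same route the paper takes. But there is a genuine gap in how you compute the intersection degree. You assert that $[L_Q\cap\el_\infty^-:\f_\infty]$ equals ``the order of the image of the class $[\bar\alpha]$ in $\X_\infty^-$,'' which you do not justify and which is not the mechanism the paper uses. What the paper actually establishes (its Lemma \ref{intdeg}) is a direct ramification statement: with $\alpha\notin(\widehat{F^*})^p$, the $p$-adic valuations $v_{q_1}(\alpha)$ and $v_{q_2}(\alpha)$ are nonzero, generate the same ideal $(p^a)$ in $\Z_p$ (this uses the norm to $\Q$ and the constraint $\mathrm{loc}_p(\alpha)$ torsion, together with inertness of the $q_i$), and then $[L_Q\cap\el_\infty^-:\f_\infty]=p^a$ because $\f_n(\alpha^{1/p^a})$ is unramified at $q_i$ exactly when $p^a\mid v_{q_i}(\alpha)$. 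Your phrasing conflates the unramified part of $L_Q$ (determined by ramification indices at $Q$) with an order of a class in the quotient Galois group $\X_\infty^-$; these agree only after the intermediate computation that you skip, and that computation is the crux of the divisibility statement. Once one has $[L_Q\cap\el_\infty^-:\f_\infty]=p^a$ with $(p^a)=(v_{q_i}(\alpha))$, the divisibility of $p^a$ by $m_Q$ falls out by comparing the two descriptions of the image of $\alpha$ in $G'$ (trivial on one hand, a power $(\mathrm{Frob}_{q_1}^{a_1}\mathrm{Frob}_{q_2}^{a_2})^{p^a}$ on the other).

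You also correctly flag, but do not resolve, the hard part of the sharpness assertion. The paper's construction (its Lemma \ref{val}) is delicate: starting from a generator $\mathrm{Frob}_{q_1}^{a_1}\mathrm{Frob}_{q_2}^{a_2}$ of $M_Q$, one raises to $nm_Q$ where $n$ is the prime-to-$p$ part of $h_F$, approximates the $a_i\in\Z_p^*$ by integers $b_i$ modulo a large $p$-power to land in a principal ideal, produces a well-defined class $\alpha'$ in $\widehat{E_Q}/\widehat{E_F}$, and then corrects by an element of $\widehat{E_F}$ lifting the unit part so that $\mathrm{loc}_p(\alpha)$ becomes torsion while $(v_{q_i}(\alpha))=(m_Q)$. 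Your sketch (``the minimal such relation corresponds to choosing $\alpha$ so that its image in $G'$ has order exactly $m_Q$'') names the goal but does not produce the element, and your worry about whether $\mu$ is precisely the right subgroup to quotient by is exactly the thing that the explicit construction is designed to settle. So: same strategy, but the two technical lemmas doing the actual work are missing.
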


Note that if Leopoldt's  conjecture is true for $F$
and $p$, then such an $L_Q$ exists and is unique.

\begin{proof} 
In the first part of the proof, let us fix $L_Q$ as in the statement of the theorem
and let us prove that $[L_Q \cap \el_\infty^-:\f_\infty]$
 is divisible by  $ m_Q$.

 By the  Kummer theory in \S \ref{kummertheory}, one gets an $\alpha \in \widehat{F^*}$, in fact even in the $p$-adic completion $\widehat{E_Q}$  of the $Q$-units of $F^*$,  such that $L_Q=F(\mu_{p^\infty},\alpha^{1 \over p^\infty})$.  We may assume that $\alpha \notin{( \widehat{F^*})}^p$
  equivalently not in $(\widehat{E_Q})^p$.  
  

  \begin{lemma}\label{intdeg} For each $n$, $F(\mu_{p^\infty},\alpha^{1 \over p^n})$ is 
 cyclic of order $p^n$ over $\f_\infty$.
 The valuations $v_{q_1}(\alpha)$ and $v_{q_2} (\alpha )$ are non zero and generate
 the same ideal ideal in $\Zp$.  If $(p^a)$ is this ideal,
 we have $p^a=[L_Q \cap \el_\infty^-:\f_\infty]$
 
 \end{lemma}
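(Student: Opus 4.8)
The plan is to dispose of the three assertions in turn, reducing each to the local and global class field theory already assembled in \S\ref{kummertheory} and \S\ref{degree}.

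\emph{First assertion.} By Proposition \ref{kummerprop} the extension $L_Q=F_{\bar\alpha}$ is a Kummer $\Zp$-extension of $\f_\infty$, so ${\rm Gal}(L_Q/\f_\infty)\simeq\Zp$, cut out by a homomorphism $f\colon G_{\f_\infty}\to\Zp(1)$. The standing hypothesis that $\alpha$ is not a $p$-th power in $\widehat{F^*}$ forces $f$ to be surjective (under the running hypotheses one sees this is equivalent, using that $\widehat{F^*}\to{\rm Hom}(G_{\f_\infty},\Zp(1))^0$ is surjective because $H^2({\rm Gal}(\f_\infty/F),\Zp(1))=0$). Reducing $f$ modulo $p^n$ then gives a surjection $G_{\f_\infty}\twoheadrightarrow\mu_{p^n}$ whose fixed field is exactly $F(\mu_{p^\infty},\alpha^{1/p^n})$; hence this field is cyclic of degree $p^n$ over $\f_\infty$.

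\emph{Statement about the valuations.} Since $L_Q/\f_\infty$ is ramified above $q_1,q_2$ and $q_1,q_2\nmid p$, Proposition \ref{ramification} and the remark after it give $v_{q_i}(\alpha)\neq0$, and the same proposition shows ${\rm loc}_v(\alpha)$ is torsion for each $v\mid p$ (since $L_Q/\f_\infty$ is unramified there). I would then push $\alpha$ into $G=Y'_\infty/(\gamma-1)Y_\infty$, the Galois group of the maximal abelian $p$-extension of $F$ unramified outside $p$, exactly as in the proof of Theorem \ref{M_Q}: on one hand its image is the class of ${\rm loc}_p(\alpha)$ in $U_F^1/\overline{E^1_F}\subset G$, and on the other, since $\alpha$ is a $Q$-unit and that extension is unramified at every finite place outside $\{q_1,q_2,p\}$, global reciprocity shows this image equals ${\rm Frob}_{q_1}^{-v_{q_1}(\alpha)}{\rm Frob}_{q_2}^{-v_{q_2}(\alpha)}$. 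Applying the degree map $G\to\Zp$ kills the left-hand side, because the class of ${\rm loc}_p(\alpha)$ is of finite order while $\Zp$ is torsion-free; hence $v_{q_1}(\alpha)\,{\rm deg}({\rm Frob}_{q_1})+v_{q_2}(\alpha)\,{\rm deg}({\rm Frob}_{q_2})=0$, and since $q_1,q_2$ are inert in $F_\infty/F$ each ${\rm deg}({\rm Frob}_{q_i})$ is a unit of $\Zp$. Therefore $v_{q_1}(\alpha)$ and $v_{q_2}(\alpha)$ differ by a $\Zp$-unit and generate a common ideal $(p^a)$.

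\emph{Last assertion.} Fix $i$. As $q_i\nmid p$, the extension of the completion $F_{q_i}$ obtained by adjoining $\mu_{p^\infty}$ is unramified, so a uniformizer $\omega_i$ of $F_{q_i}$ is still one of its $\f_\infty$-completion; writing ${\rm loc}_{q_i}(\alpha)=\omega_i^{v_{q_i}(\alpha)}\zeta_i$ with $\zeta_i\in\mu_{p^\infty}(F_{q_i})$ and $v_{q_i}(\alpha)=p^au_i$, $u_i\in\Zp^{*}$, the completion of $L_Q$ at $q_i$ is, at level $n\geq a$, the totally ramified extension obtained by adjoining a $p^{n-a}$-th root of $\omega_i^{u_i}$ (the factor $\zeta_i$ contributes nothing since all $p$-power roots of unity already lie in the base). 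Hence the inertia subgroup $I_{q_i}\subset{\rm Gal}(L_Q/\f_\infty)\simeq\Zp$ is $p^a\Zp$. Now every subextension of $L_Q/\f_\infty$ is abelian and odd (as ${\rm Gal}(L_Q/\f_\infty)\simeq\Zp(1)$), while $\el_\infty^-/\f_\infty$ is unramified everywhere, so $L_Q\cap\el_\infty^-$ is exactly the maximal subextension of $L_Q/\f_\infty$ that is unramified everywhere; since $L_Q/\f_\infty$ is ramified only above $q_1,q_2$, its Galois group is ${\rm Gal}(L_Q/\f_\infty)/\langle I_{q_1},I_{q_2}\rangle=\Zp/p^a\Zp$. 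Thus $[L_Q\cap\el_\infty^-:\f_\infty]=p^a$. The crux I expect is the middle step — identifying the image of the Kummer element $\alpha$ in $Y'_\infty/(\gamma-1)Y_\infty$ with a monomial in the ${\rm Frob}_{q_i}$ times a class that is visibly of finite order, and then extracting the divisibility by passing to the degree quotient; this is, however, already essentially contained in the proof of Theorem \ref{M_Q}, and granting it the first and third assertions are routine applications of the local Kummer theory of \S\ref{kummertheory}.
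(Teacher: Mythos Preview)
Your argument is correct. The first and third assertions are handled essentially as in the paper, though the paper's proof of the first part is more economical: it simply invokes $H^1({\rm Gal}(\f_\infty/F),\mu_p)=0$ (a consequence of $\mu_p(F)=1$) to see directly that $\alpha\notin(\widehat{F^*})^p$ forces $[\f_\infty(\alpha^{1/p^n}):\f_\infty]=p^n$, rather than routing through both the injectivity and the $H^2$-vanishing as you do.

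The genuine divergence is in the second assertion. The paper does \emph{not} pass through the global reciprocity map into $G=Y'_\infty/(\gamma-1)Y_\infty$ and the degree map. Instead it uses a more elementary device: apply the norm $N_{F/\Q}\colon\widehat{F^*}\to\widehat{\Q^*}$ followed by ${\rm loc}_p\colon\widehat{\Q^*}\to\widehat{\Qp^*}$. This sends $\alpha$ to $N(q_1)^{v_{q_1}(\alpha)}N(q_2)^{v_{q_2}(\alpha)}$, whose image in $U^1_{\Zp}$ is trivial since ${\rm loc}_p(\alpha)$ is torsion; as $q_1,q_2$ are inert in $F_\infty/F$, the classes $N(q_i)-1$ generate the same (unit) ideal of $p\Zp$, and the relation $(v_{q_1}(\alpha))=(v_{q_2}(\alpha))$ drops out immediately. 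Your route via the Artin map and the degree quotient is more in keeping with the ambient machinery of \S\ref{degree} (and indeed makes the link to $M_Q$ transparent), whereas the paper's norm-to-$\Q$ trick is shorter and avoids invoking global class field theory for an element of $\widehat{F^*}$ rather than $F^*$. Both reach the same conclusion; your version trades elementariness for conceptual coherence with the surrounding section.
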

 
\begin{proof}
The first part of the lemma follows from  
$H^1({\rm Gal}(\f_\infty/F),\mu_{p})=0$. This is a consequence of 
 $\mu _p (F)=1$.

 Consider the map $\widehat{F^*} \rightarrow \widehat{\Q^*} \rightarrow 
\widehat{\Qp ^*}$, where the first arrow is induced by the  norm and the second
one by the localisation map $\mathrm{loc}_p$. It sends $\alpha$ to $N(q_1)^{v_{p_1}(\alpha )}
N(q_2 ) ^{v_{p_2}(\alpha)}$. Its image in $U^1_{\Zp}$ is trivial as $\mathrm{loc}_p (\alpha)$ is torsion.
As $q_1$ and $q_2$ are inert in $F_\infty $, the norms $N(q_1)$ and $N(q_2 )$
have images in $U^1_{\Zp}$ such that that 
$N(q_1)-1$ and $N(q_2)-1$ 
topologically generate the same ideal in $\Zp$. The second part 
of the lemma follows.

The third  part follows from the fact that $\mathcal{F}_n (\alpha^{1/p^a})$
is unramified at $q_i$ for $n+t\geq a$ if and only if $p^a$ divides $v_{q_i } (\alpha )$
and the first part of the lemma. 

\end{proof}

 By the Kummer theory in \S \ref{kummertheory} we deduce that $\alpha \in \widehat{E_Q}$ of the first paragraph of the proof  has the properties:
 
--  ${\rm loc}_p (\alpha)$ is torsion, and hence the natural norm map $\widehat{E_Q} \rightarrow \widehat{\Q_p^*}$ evaluates $\alpha$ to 1.
  
  -- $v_t(\alpha)=0$ for $t \notin Q$
  
  -- $v_{q_i}(\alpha) \neq 0$ and generate the same ideal say $(m)$ in $\Z_p$.
  
  -- $\alpha \notin (\widehat{F^*})^p$

 We note that  the $\Z_p$-submodule $M_Q$  of $G$ is generated by any element 
 of the form ${\rm Frob}_{q_1}^{a_1}{\rm  Frob}_{q_2}^{a_2}$ with $a_i \in \Z_p^*$, such that its image in ${\rm Gal}(F_\infty/F)$ is trivial. An explicit generator is gotten by taking $a_1= -{\rm log}_{\langle N(q_1)\rangle} \langle N(q_2) \rangle), a_2=1$ where by $\langle N(q_i) \rangle$ we mean the projection of $N(q_i)$ to $\Gamma$ in the decomposition $\Z_p^*= \Z/(p-1)\Z \times \Gamma$.
 
 
 Consider  the image of such an $\alpha$ in the Galois group $G'$
 by the map $ \widehat{E_Q}\rightarrow U_F ^1\rightarrow G \rightarrow G'$. 
 On the one hand it is trivial as ${\rm loc}_p(\alpha)$ is torsion. But on the other hand, 
 as $\mathrm{loc}_p (N(\alpha ))=1$,  it is  also of the form $({\rm Frob}_{q_1}^{a_1}{\rm Frob}_{q_2}^{a_2})^m$  with ${\rm Frob}_{q_i} $denoting the Frobenius at $q_i$ in the abelian Galois group $G'$, and with $a_i \in \Z_p^*$.  From this  we deduce that  $m_Q$ 
 divides $m$. By Lemma \ref{intdeg} we deduce that
 the degree $[L_Q \cap \el_\infty^-:\f_\infty]$
  is divisible by the order $ m_Q$ of the image of  $M_Q$ in $G'$. This finishes the first
  part of the proof.




The following lemma finishes the proof of the theorem.

  \begin{lemma}\label{val}
  There is  an element $\alpha \in \widehat{F^*}$  such that
  
  -- ${\rm loc}_p (\alpha)$ is torsion
  
  -- $v_t(\alpha)=0$ for $t \notin Q$
  

   -- $(v_{q_1}(\alpha))=(v_{q_2}(\alpha))=(m_Q)$ as ideals in $\Z_p$.
  
  We note for later use that if  $M_Q$ is trivial we get an element $\alpha \in \widehat{F^*}$  such that
  
  -- ${\rm loc}_p (\alpha)=1$
  
  -- $v_t(\alpha)=0$ for $t \notin Q$
  

   -- $(v_{q_1}(\alpha))=(v_{q_2}(\alpha))=\Z_p$.

  \end{lemma}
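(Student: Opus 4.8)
The plan is to reverse the valuation--Frobenius computation of the first part of the proof. Write $j$ for the composite $U^1_F \twoheadrightarrow U^1_F/\overline{E^1_F} \hookrightarrow G$, the last arrow being the one in (\ref{cftbis}), so that $G'=G/j(\mu)$ with $\mu=\prod_{v\mid p}\mu_{p^\infty}(F_v)$ the group of local $p$-power roots of unity. Recall that $M_Q$ is the cyclic $\Z_p$-module generated by $\sigma:=\mathrm{Frob}_{q_1}^{a_1}\mathrm{Frob}_{q_2}^{a_2}$ for a suitable pair $a_1,a_2\in\Z_p^*$ with $\sigma$ of degree $0$, and that by definition $m_Q$ is the order of the image $\bar\sigma$ of $\sigma$ in $G'$. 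I want to produce $\alpha\in\widehat{F^*}$, supported at $Q$, with $v_{q_i}(\alpha)$ generating $(m_Q)$ for $i=1,2$ and $\mathrm{loc}_p(\alpha)$ a $p$-power root of unity. I would do this in two steps: first build an auxiliary $\alpha_0\in\widehat{E_Q}$ with the right valuations at $q_1$ and $q_2$, then correct it by a global unit so that $\mathrm{loc}_p$ becomes torsion, the correction being forced by global reciprocity.

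For the auxiliary element, I would argue as follows. Since $\bar\sigma^{m_Q}=1$ in $G'$, the element $\sigma^{m_Q}$ lies in $j(\mu)\subseteq j(U^1_F)=\ker(G\to C)$, so its image in $C$ is trivial; but that image is $a_1m_Q\,[q_1]+a_2m_Q\,[q_2]$ (classes taken in $C$, written additively), whence $a_1m_Q\,[q_1]+a_2m_Q\,[q_2]=0$ in $C$. Finiteness of the ideal class group gives that the image of the valuation map $\widehat{E_Q}\to\Z_p^2$, $x\mapsto(v_{q_1}(x),v_{q_2}(x))$, is exactly the kernel of $\Z_p^2\to C$, $(a,b)\mapsto a\,[q_1]+b\,[q_2]$: the integral lattice of pairs $(a,b)\in\Z^2$ with $a[q_1]+b[q_2]=0$ in $\mathrm{Cl}(F)$ and this kernel agree after $\otimes\Z_p$, the discrepancy being prime to $p$. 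Hence there is $\alpha_0\in\widehat{E_Q}$ with $v_{q_i}(\alpha_0)=a_im_Q$ and $v_t(\alpha_0)=0$ for every finite $t\notin Q$; as $q_1,q_2\nmid p$ the element $\alpha_0$ is a unit at every place above $p$, so $\mathrm{loc}_p(\alpha_0)\in U^1_F$.

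For the correction, I would feed $\alpha_0$ into the global Artin reciprocity law for the maximal abelian $p$-extension of $F$ unramified outside $p$ (whose group is $G$), extended by continuity to $\widehat{F^*}$, so that the product of the local symbols of $\alpha_0$ is trivial. The symbols at finite $t\notin Q\cup\{p\}$ vanish ($\alpha_0$ is a unit there and the extension is unramified; the archimedean symbols vanish as $p$ is odd), the symbol at $q_i$ is $\mathrm{Frob}_{q_i}^{v_{q_i}(\alpha_0)}=\mathrm{Frob}_{q_i}^{a_im_Q}$, and the product over the places above $p$ is $j(\mathrm{loc}_p(\alpha_0))$. Thus $\sigma^{m_Q}=j(\mathrm{loc}_p(\alpha_0))^{-1}$ in $G$. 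Combining this with $\sigma^{m_Q}=j(\zeta)$ for a suitable $\zeta\in\mu$ and using $\ker(j)=\overline{E^1_F}$, I get $\mathrm{loc}_p(\alpha_0)\,\zeta\in\overline{E^1_F}$. I would then lift $\mathrm{loc}_p(\alpha_0)\,\zeta$ to an element $\tilde e\in\widehat{E^1_F}\subset\widehat{F^*}$ (the map $\widehat{E^1_F}\to U^1_F$ has image $\overline{E^1_F}$) and set $\alpha:=\alpha_0\,\tilde e^{-1}$. Then $\mathrm{loc}_p(\alpha)=\zeta^{-1}$ is a $p$-power root of unity, while $\tilde e$ is a global unit so $v_t(\alpha)=v_t(\alpha_0)$ for all finite $t$; hence $v_t(\alpha)=0$ for $t\notin Q$ and $(v_{q_i}(\alpha))=(a_im_Q)=(m_Q)$ since $a_i\in\Z_p^*$. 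The variant is immediate: if $M_Q$ is trivial then $m_Q=1$ and $\sigma=1$ in $G$, and the same computation with $\zeta=1$ produces $\alpha$ with $\mathrm{loc}_p(\alpha)=1$ and $(v_{q_i}(\alpha))=\Z_p$.

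The points that demand care are the two appeals to class field theory: making precise that global reciprocity for this extension extends to $\widehat{F^*}$ with the stated local components, the components above $p$ assembling into the map $j$ attached to (\ref{cftbis}); and the identification of the image of the $Q$-valuation map on $\widehat{E_Q}$ with $\ker(\Z_p^2\to C)$, which is a routine prime-to-$p$ argument. Everything else is bookkeeping around the exact sequence (\ref{cftbis}).
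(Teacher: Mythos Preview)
Your argument is correct and follows essentially the same strategy as the paper: construct an element of $\widehat{E_Q}$ with the prescribed valuations at $q_1,q_2$ using that $\sigma^{m_Q}$ dies in $C$, then correct by a global unit so that $\mathrm{loc}_p$ becomes torsion. The difference is organizational. The paper builds the auxiliary element by hand, approximating the $p$-adic exponents $a_i$ by integers modulo $p^m$ and splicing together an integral piece $\beta$ and a $p$-power piece $\beta'$; it then asserts without further comment that the natural map $\widehat{E_Q}/\widehat{E_F}\to U_F^1/\overline{E_F^1}\mu$ kills $\alpha'$. You instead identify the image of the valuation map $\widehat{E_Q}\to\Z_p^2$ with $\ker(\Z_p^2\to C)$ in one stroke, and you make the reciprocity step explicit by invoking the product formula for the Artin symbol, which yields $\sigma^{m_Q}\cdot j(\mathrm{loc}_p(\alpha_0))=1$ directly. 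This is cleaner and explains the step the paper leaves implicit; the paper's version has the minor advantage of avoiding the (routine) continuity argument needed to extend the product formula to $\widehat{F^*}$. The two technical points you flag (extension of reciprocity to $\widehat{F^*}$ for a pro-$p$ abelian target, and the identification of the image of the valuation map via flatness of $\Z_p$) are indeed routine.
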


Consider $L_\alpha=\f_\infty(\alpha^{1 \over p^\infty})$ with $\alpha$ as in the first part of the lemma.   By  \S \ref{kummertheory} we get that $L_\alpha$ is a $\Z_p$-Kummer extension such that $L_\alpha/\f_\infty$ is ramified exactly at the  primes above $q_1,q_2$. Furthermore by  Lemma \ref{intdeg}, $[L_\alpha\cap \el_\infty^-:\f_\infty]=m_Q$.

  Thus we only need to prove the lemma.  We recall the  exact sequence (\ref{degree2}) from earlier:
  $$0 \rightarrow Y_\infty /(\gamma-1)Y_\infty  \rightarrow Y_\infty'/(\gamma-1)Y_\infty  \rightarrow \Z_p \rightarrow 0.$$

  Recall that $M_Q$ is a submodule of $Y_\infty /(\gamma-1)Y_\infty  $.
  Consider
  a generator $F_Q$  of $M_Q$ which we may write as ${\rm Frob}_{q_1}^{a_1}{\rm Frob}_{q_2}^{a_2}$ with $a_i \in \Z_p$.  We note again  that $a_i \in \Z_p^*$ by the assumption that the primes in $Q$ are inert in $F_\infty/F$. Let $n$ be the order of the prime to $p$ part of the class group of $F$.  Then we may regard  $(q_1^{a_1}q_2^{a_2})^{nm_Q}$ as a well-defined  element $\alpha'$
  of $\widehat{E_Q}/\widehat{E_F}$ as follows.   Choose $m$ large enough so that $q_i^{p^m}$ has image in the class group $Cl_F$ of $F$ of order prime to $p$.
  Choose $b_i \in \Z$ so that $a_i$ is congruent to $b_i$ modulo $p^m$: write $a_i=b_i+p^{m} c_i$ with $c_i \in \Z_p$.
  Note that $(q_1^{b_1}q_2^{b_2})^{{nm_Q}}$ has trivial image in  the  class group  $Cl_F$,  as 
  ${({\rm Frob}_{q_1}^{a_1}{\rm Frob}_{q_2}^{a_2})}^{{m_Q}}$ is trivial in $G'$, and thus gives rise to   a well-defined  element $\beta$ of   $E_Q/E_F$ whose image in $\widehat{E_Q}/\widehat{E_F}$ we denote by the same symbol.   Here we are using the exact sequence   (\ref{degree2}).  Furthermore
  $(q_1^{np^mc_1}q_2^{np^mc_2})^{m_Q}$ gives rise to   a well-defined  element $\beta'$   of $\widehat{E_Q}/\widehat{E_F}$. Thus taking product  $\beta\beta'$ we see that  altogether $(q_1^{a_1}q_2^{a_2})^{nm_Q}$ gives rise to  a well-defined  element  $\alpha'$ of $\widehat{E_Q}/\widehat{E_F}$ independent of choice of $m$.  Furthermore, the natural map
   $\widehat{E_Q}/\widehat{E_F} \rightarrow U_F^1/ \overline{E_F^1}\mu$ sends $\alpha'$ to 1.

  Choose $\alpha'' \in \widehat E_Q$ which projects to $\alpha'$, and by choice maps  to an element of $\overline{E_F^1}\mu$ under the natural map
  $\widehat{E_Q} \rightarrow  U_F^1$.  Thus the image of $\alpha''$ in $F_p^*/\mu$ is the image of  an $e'$ for $e' \in \overline{E_F^1}$. Let $e$ be any  inverse image of $e'$ under the natural map $\widehat{E_F} \rightarrow \overline{E_F^1}$. We set $\alpha=\alpha''.e^{-1}$, and see that  ${\rm loc}_p(\alpha)$ is torsion, $\alpha \in \widehat{E_Q}$, and $(v_{q_i}(\alpha))=({m_Q})$. The second part of the lemma follows by a similar argument.

\end{proof}

We may verify one consequence of our reciprocity conjecture as (ii) of the following corollary:

 \begin{cor}\label{evidence}

(i) The exact  sequence  (\ref{exact})  of $\Lambda$-modules splits if and only if $m_Q=1$.

(ii)  For a tuple of primes $Q=\{q_1,q_2\}$ inert in $F_\infty/F$,  $M_Q$ trivial implies that the exact sequence (\ref{exact1})  of $\Lambda$-modules splits.
\end{cor}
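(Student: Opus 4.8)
The plan is to deduce both parts from Theorem \ref{intersection} together with the Kummer-theoretic dictionary of \S\ref{kummertheory}, so that the corollary becomes essentially a bookkeeping exercise once the theorem is in hand. For part (i), the key point is that the exact sequence (\ref{exact}) of $\Lambda$-modules $0 \to \Z_p(1)^{m-1} \to \X_{\infty,Q}^- \to \X_\infty^- \to 0$ splits (as $\Z_p$-modules, hence as $\Lambda$-modules since $m=2$ makes the kernel cyclic and the $\Gamma$-action rigidly the cyclotomic one) precisely when $\X_{\infty,Q}^-$ contains a $\Lambda$-submodule mapping \emph{isomorphically} — not merely isogenously — onto $\X_\infty^-$. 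By the Galois correspondence, such a submodule cuts out an extension $\el_\infty^-$ sitting inside $\el_{\infty,Q}^-$, and the quotient by it is a $\Z_p(1)$ cut out by a $\Z_p$-Kummer extension $L_Q/\f_\infty$ that is totally ramified at the primes above $Q$ and linearly disjoint from $\el_\infty^-$ over $\f_\infty$. Thus the splitting of (\ref{exact}) over $\Z_p$ is equivalent to the existence of an $L_Q$ of the kind furnished by Lemma \ref{obvious} with $L_Q \cap \el_\infty^- = \f_\infty$, i.e. with $[L_Q \cap \el_\infty^- : \f_\infty] = 1$. Theorem \ref{intersection} says exactly that the minimal value of this intersection degree, over all admissible $L_Q$, is $m_Q$; hence such an $L_Q$ exists iff $m_Q = 1$, proving (i).

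For part (ii), suppose $M_Q$ is trivial for a tuple $Q=\{q_1,q_2\}$ inert in $F_\infty/F$. First, $M_Q$ trivial certainly implies $M_Q$ finite, so by Theorem \ref{M_Q} the hypotheses of Theorem \ref{intersection} are met and an admissible $L_Q$ exists. Moreover $m_Q$, the order of the image of $M_Q$ in $G'$, is then also trivial, so $m_Q = 1$; by part (i) the sequence (\ref{exact}) of $\Lambda$-modules splits. It remains to upgrade this to the splitting of (\ref{exact1}), the direct-limit sequence $0 \to \Q_p/\Z_p(1) \to \EA_{\infty,Q}^- \to \EA_\infty^- \to 0$. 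The plan here is to pass through Iwasawa duality: the sequence (\ref{exact}) is, up to the Iwasawa dual/adjoint operations of \S\ref{Iwasawa} (Lemma \ref{adjoints}, Theorem \ref{iwa}(ii)), the sequence obtained from (\ref{exact1}) by applying $\mathrm{Hom}_{\Z_p}(-,\Q_p/\Z_p)$ and twisting; more directly, one can use the last part of Lemma \ref{val} — which, under the assumption that $M_Q$ is trivial, produces an $\alpha \in \widehat{F^*}$ with $\mathrm{loc}_p(\alpha) = 1$, with $v_t(\alpha)=0$ off $Q$, and with $(v_{q_1}(\alpha)) = (v_{q_2}(\alpha)) = \Z_p$ — to build an $L_Q/\f_\infty$ that is \emph{totally} (not just almost totally) ramified at each prime above $Q$ and linearly disjoint from $\el_\infty^-$. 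Dualizing the resulting splitting $\X_{\infty,Q}^- = \mathrm{Gal}(\el_\infty^-/\f_\infty) \oplus \Z_p(1)$ under the Iwasawa pairing of Theorem \ref{iwa}(i) gives the splitting of (\ref{exact1}).

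The step I expect to be the main obstacle is the last one in (ii): making sure that a $\Z_p$-level splitting of the inverse-limit (compact) sequence (\ref{exact}) transfers cleanly, without loss of a finite cokernel, to a $\Z_p$-level splitting of the direct-limit (discrete) sequence (\ref{exact1}). The two are exchanged by Iwasawa duality only up to pseudo-isomorphism in general (Lemma \ref{adjoints}), and $\Q_p/\Z_p(1)$ being divisible always gives a $\Z_p$-linear section of (\ref{exact1}); the content is that the section can be taken $\Gamma$-equivariant precisely when the dual sequence splits, which is where the explicit $\alpha$ from Lemma \ref{val} — with exact, not approximate, ramification data — does the real work by exhibiting the complementary $\Z_p(1)$ as an honest subquotient on both sides. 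Once that identification is set up, the rest is the bookkeeping indicated above.
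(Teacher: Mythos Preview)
Your proof of part (i) is correct and matches the paper's: the sequence (\ref{exact}) splits over $\Lambda$ iff there is a Kummer $\Z_p$-extension $L_Q$ with $L_Q \cap \el_\infty^- = \f_\infty$, and Theorem \ref{intersection} says the minimum of $[L_Q \cap \el_\infty^- : \f_\infty]$ is exactly $m_Q$.

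For part (ii), you have the right key ingredient --- the $\alpha \in \widehat{E_Q}$ from the second half of Lemma \ref{val}, with $\mathrm{loc}_p(\alpha)=1$ and $v_{q_i}(\alpha)\in\Z_p^*$ --- but your route from there is a detour that creates the very obstacle you flag. You pass through a splitting of the compact sequence (\ref{exact}) and then try to ``dualize'' to (\ref{exact1}); but the Iwasawa pairing of Theorem \ref{iwa}(i) relates $Y_\infty$ and $\EA_\infty^-$, not $\X_{\infty,Q}^-$ and $\EA_{\infty,Q}^-$, and the relation between the inverse-limit and direct-limit ray-class sequences is an adjoint/pseudo-isomorphism, which does not preserve integral splittings. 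So your worry in the last paragraph is justified, and your proposal does not actually close the gap.

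The paper avoids the issue by never going up to the infinite level on the compact side. It uses the same $\alpha$ directly at each finite level: for every $n$, the extension $\f_n(\alpha^{1/p^{n+t}})/\f_n$ is odd, cyclic of degree $p^{n+t}$, unramified outside $Q$ (since $\mathrm{loc}_p(\alpha)=1$ and $v_t(\alpha)=0$ off $Q$), and has no nontrivial unramified subextension (since $v_{q_i}(\alpha)$ is a unit). By class field theory this is a surjection $\EA_{n,Q}^- \twoheadrightarrow \mu_{p^{n+t}}$ whose restriction to $\kay_{n,Q}^-$ is an isomorphism, i.e.\ a $\Gamma$-equivariant retraction splitting (\ref{rub}). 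These retractions are visibly compatible under the inclusion maps of Lemma \ref{injectivity}, so the direct limit gives a $\Lambda$-splitting of (\ref{exact1}). This is shorter and sidesteps duality entirely; the detour through (i) is unnecessary for (ii).
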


\begin{proof}

(i) The sequence (\ref{exact}) splits if and only if there is a Kummer $\Z_p$-extension
$L_Q$ as in the theorem with the property that $L_Q \cap \el_\infty^-$ is trivial.
This is equivalent by the theorem to $m_Q=1$.

(ii) By  the lemma \ref{val} in the proof above,  under the assumption that $M_Q$ is trivial  we get an element $\alpha$ of $\widehat{E_Q}$ such that ${\rm loc}_p(\alpha)=1$, and $v_{q_i}(\alpha)$
 is a unit for $q_i$ in  $Q$. Then for any $n$, the extension of $\f_n$ given by $\f_n(\alpha^{1 \over {p^{n+t}}})$ is cyclic of degree $p^{n+t}$, unramified outside the primes above $Q$, and has no non-trivial unramified subextension. By class field theory this provides a compatible sequence of splittings of the exact sequences (\ref{rub}), and thus a splitting of  (\ref{exact1}).

\end{proof}

\noindent{\it Remark:} We may also verify the converse of part (ii) of the corollary  in some situations, for instance when $\f_\infty/F$  has a unique prime above $p$ and is totally ramified at this prime.

\section{Even extensions of Iwasawa modules}\label{even}


  We state  the theorem of Iwasawa proved in U4 of \cite{Iwasawa}.
 
 \begin{theorem}\label{il}(Iwasawa)
 Leopoldt's conjecture  is  equivalent to  the following statement:   
 For any set of finite places $Q$ disjoint from $S_p$ the map
 $$H^1(S _p \cup Q,\Q_p/\Z_p) \rightarrow \prod_{v \in Q}{H^1(I_v,\Q_p/\Z_p)^{D_v}}$$ is surjective.
\end{theorem}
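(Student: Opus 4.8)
The statement is a reformulation of Leopoldt's conjecture in terms of Galois cohomology with restricted ramification, and the plan is to deduce it from the already-established equivalence in Proposition \ref{equivalent} together with the Poitou--Tate global duality sequence. First I would fix notation: let $G_{S_p\cup Q}$ denote the Galois group of the maximal extension of $F$ unramified outside $S_p\cup Q$, and recall that $H^1(S_p\cup Q,\Q_p/\Z_p)$ is the continuous cohomology $H^1(G_{S_p\cup Q},\Q_p/\Z_p)$. The dual of the map in question controls the difference between the ``$S_p$-ramified'' and ``$S_p\cup Q$-ramified'' Selmer-type groups for the module $\Z_p(1)$, which is the Tate twist dual of $\Q_p/\Z_p$. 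The key point is that adding the auxiliary primes $Q$ to the ramification set cannot change the cohomology of $\Q_p/\Z_p$ ``beyond what $Q$ locally allows'' precisely when the relevant dual $H^1$ does not grow — and that non-growth is exactly Leopoldt for $F$ and $p$ (via $\delta_{F,p}=0$, i.e.\ via Proposition \ref{equivalent} applied to $Y_\infty$).

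The concrete steps I would carry out are as follows. (1) Identify $H^1(S_p\cup Q,\Q_p/\Z_p)$ with $\mathrm{Hom}$ of the Galois group of the maximal abelian $p$-extension of $F$ unramified outside $S_p\cup Q$ into $\Q_p/\Z_p$, and similarly for $S_p$ alone; the cokernel of the restriction-to-inertia map $\prod_{v\in Q}H^1(I_v,\Q_p/\Z_p)^{D_v}$ measures, by inflation-restriction at each $v\in Q$, the failure of local ramification at $Q$ to be realized by a global class. (2) Apply the nine-term Poitou--Tate exact sequence for the finite module $\mu_{p^n}=\Z_p/p^n(1)$ with ramification set $S_p\cup Q$, and pass to the limit in $n$; the cokernel of the map in the theorem becomes identified (up to the usual Tate-duality transpositions) with the kernel of the corresponding localization map for $\Z_p(1)$, i.e.\ with a Shafarevich--Tate-type group $\Sha^1_{S_p\cup Q}(\Z_p(1))$ versus $\Sha^1_{S_p}(\Z_p(1))$. (3) Relate $\Sha^1(\Z_p(1))$ to $Y_\infty$ and its $\Gamma$-coinvariants: using Iwasawa's description (Theorem \ref{iwa}) of $Y_\infty$ as $\mathrm{Hom}_{\Z_p}(\EA_\infty^-,\Q_p/\Z_p(1))$, together with the exact sequence (\ref{degree2}), show that surjectivity of the map in the theorem for \emph{all} such $Q$ forces $Y_\infty/(\gamma-1)Y_\infty$ to be finite, and conversely. (4) Invoke Proposition \ref{equivalent} to conclude the equivalence with Leopoldt's conjecture.

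The main obstacle, I expect, is step (2)--(3): carefully matching the cokernel of the inertia-restriction map with the growth of the global cohomology of $\Z_p(1)$, keeping track of the local terms at $v\in Q$ (each contributing a copy of $\Z_p$, coming from the fact that the primes of $Q$ are unramified and the Frobenius acts nontrivially, so $H^1(I_v,\Q_p/\Z_p)^{D_v}$ is a line), and being careful about what happens in the limit over $n$, where Mittag--Leffler arguments are needed exactly as in the proofs of Corollary \ref{added} and Lemma \ref{generality}. One must also check that no extra contribution comes from the archimedean places, which is where the hypothesis that $F$ is totally real and $p$ is odd is used. A subtle point is that one direction (Leopoldt $\Rightarrow$ surjectivity) is the ``easy'' one and follows from a dimension count using the global Euler characteristic formula, whereas the converse requires choosing $Q$ cleverly — precisely the sets $Q$ of Proposition \ref{cebotarev}, whose Frobenii generate $Y'_\infty/(\gamma-1)Y_\infty$ — so that a single failure of surjectivity detects the infinitude of $M_Q=Y_\infty/(\gamma-1)Y_\infty$. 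I would present the argument by first doing the dimension count for one direction and then using the Chebotarev-type choice of $Q$ for the other, citing Iwasawa's original argument in U4 of \cite{Iwasawa} for the delicate duality bookkeeping.
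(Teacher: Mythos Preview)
The paper does not actually prove Theorem \ref{il}; it is stated as Iwasawa's result with a citation to U4 of \cite{Iwasawa}, followed only by a remark giving Iwasawa's original formulation (in terms of the order of the image of inertia at $q$ in $\mathrm{Gal}(F_{p,q}/F)$ being exactly $e(q)$). So there is no proof in the paper to compare against, and your plan would have to stand on its own.

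Your plan contains a concrete error and an unnecessary detour. The error is in step (2)--(3): for $v\nmid p$ the pro-$p$ quotient of $I_v$ is $\Z_p(1)$ with Frobenius acting by $N(v)$, so $H^1(I_v,\Q_p/\Z_p)^{D_v}$ is the $(N(v)-1)$-torsion of $\Q_p/\Z_p$, i.e.\ a finite cyclic group of order $e(v)$, not ``a copy of $\Z_p$'' as you write. This changes the nature of the argument entirely --- there is no passage to $\otimes\,\Q_p$ and no Mittag--Leffler issue; the target of the map is a finite group. The detour is your step (3): routing the argument through $Y_\infty$, $\EA_\infty^-$, and the Iwasawa pairing (Theorem \ref{iwa}) climbs the cyclotomic tower for a statement that lives entirely at the level of $F$.

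The direct argument (which is essentially Iwasawa's, per the paper's Remark) is this. Pontryagin-dualizing, surjectivity of the map in the theorem is equivalent to injectivity of $\prod_{v\in Q}\mu_{e(v)}\rightarrow \mathrm{Gal}(F_{p,Q}/F)$, where $\mu_{e(v)}$ is the $p$-part of $k_v^*$ mapping in as inertia. By the idelic description of $\mathrm{Gal}(F_{p,Q}/F)$, the kernel of this map is exactly the image, under reduction modulo the primes in $Q$, of $\ker\bigl(\widehat{E_F}\to U_F^1\bigr)$. Leopoldt's conjecture is precisely the vanishing of this kernel, which gives one direction at once. For the converse, if $\alpha\in\widehat{E_F}$ is nontrivial with $\mathrm{loc}_p(\alpha)=1$, choose (via Chebotarev applied to $F(\mu_p,\alpha^{1/p})/F$) a prime $q$ split in $F(\mu_p)$ where $\alpha$ has nontrivial image in $(k_q^*)^{(p)}$; then surjectivity fails for $Q=\{q\}$. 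No Poitou--Tate machinery, no $\Gamma$-coinvariants, and no appeal to Proposition \ref{cebotarev} are needed.
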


\noindent{\it Remark:}  Iwasawa stated his criterion as:  Leopoldt's conjecture, cf. Conjecture \ref{Leopoldt},  is true if and only for every prime $q$  prime to $p$ of $F$,  the image of inertia at  the prime $q$  in  ${\rm Gal}(F_{p,q}/F)$, with $F_{p,q}$ the maximal abelian $p$-extension of $F$ unramified outside $p,q$,   has order $e(q)$, the $p$-part of the order of the multiplicative group of the residue field at $q$, denoted by $k_q^*$.

Now we transcribe the result of Iwasawa  into an Iwasawa theoretic setting, i.e.,  a statement over $\f_\infty$.  
 It   stands in counterpoint to the situation in the odd case.

Consider a finite set of primes $Q$  away from $p$  of $F$ such that their norm is 1 modulo $p$.
(if $v\in Q$  is such that $p$ does not 
divide $N(q)-1$, $H^1(I_v,\Q_p/\Z_p)^{D_v}$ is trivial).
We consider the maximal abelian $p$-extension $M_\infty(Q)$  of $F_\infty$ that is unramified outside $p$ and $Q$ with Galois group $Y_{\infty,Q}$. We assume for simplicity that $Q$ contains only  one place $q$ 

Then we have an exact  sequence of $\Gamma$ or $\Lambda$-modules:
\begin{equation}\label{exact2}
0 \rightarrow K_Q \rightarrow Y_{\infty,Q} \rightarrow  Y_\infty \rightarrow 0
\end{equation}
where the Iwasawa module $K_Q$ is simply given by $\Lambda/((1+T)^{p^b}-u^{p^b})$ where
$\gamma(\zeta_{p^n})=\zeta_{p^n}^u$ and $u^{p^b}-1$  is divisible by the same power of $p$ as 
$N(q)-1$.  One sees this as in \cite{Greenberg}  using Kummer theory, which also shows that the exact sequence (\ref{exact2}) splits up to isogeny.

\begin{lemma}
 Leopoldt's conjecture is true for $F,p$ if and only if the exact sequence (\ref{exact2}) remains exact on going modulo $T$ for each choice of $q$.
\end{lemma}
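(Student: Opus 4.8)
The plan is to run the long exact $\Gamma$-cohomology sequence attached to (\ref{exact2}), translate the point of failure into the size of an inertia image via class field theory, and then quote Iwasawa's criterion (Theorem~\ref{il}, in the form of the Remark following it).

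First I would record the long exact sequence. Since $\Gamma\cong\Zp$ has cohomological dimension $1$, for a compact finitely generated $\Lambda$-module $M$ one has $H^0(\Gamma,M)=M^\Gamma=M[T]$, $H^1(\Gamma,M)=M/(\gamma-1)M=M/TM$, and no higher cohomology; applying this to (\ref{exact2}) — the topological input needed being exactly the one in the proof of Lemma~\ref{generality} — yields the exact sequence
\begin{equation*}
0\to K_Q^\Gamma\to Y_{\infty,Q}^\Gamma\to Y_\infty^\Gamma\xrightarrow{\ \partial\ }K_Q/TK_Q\to Y_{\infty,Q}/TY_{\infty,Q}\to Y_\infty/TY_\infty\to 0.
\end{equation*}
Hence (\ref{exact2}) stays exact modulo $T$ if and only if $\partial=0$, i.e. if and only if $K_Q/TK_Q\to Y_{\infty,Q}/TY_{\infty,Q}$ is injective. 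I would then identify both terms. From $K_Q=\Lambda/((1+T)^{p^b}-u^{p^b})$ one gets $K_Q/TK_Q=\Zp/(u^{p^b}-1)$, which by the stated property of $b$ is $\Zp/(N(q)-1)=\Z/e(q)\Z$, where $e(q)$ is the $p$-part of $|k_q^*|$. For the target, writing $Y'_{\infty,Q}=\mathrm{Gal}(M_\infty(Q)/F)$ one has $(\gamma-1)Y_{\infty,Q}=\overline{[Y'_{\infty,Q},Y'_{\infty,Q}]}$, so $Y'_{\infty,Q}/(\gamma-1)Y_{\infty,Q}=\mathrm{Gal}(F_{p,q}/F)$ is the Galois group of the maximal abelian $p$-extension of $F$ unramified outside $\{p,q\}$, and as in (\ref{degree2}) there is an exact sequence $0\to Y_{\infty,Q}/TY_{\infty,Q}\to\mathrm{Gal}(F_{p,q}/F)\to\Zp\to 0$ (the left map injective because $(\gamma-1)Y_{\infty,Q}\subseteq Y_{\infty,Q}$).

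Next I would check that the composite $K_Q/TK_Q\to Y_{\infty,Q}/TY_{\infty,Q}\hookrightarrow\mathrm{Gal}(F_{p,q}/F)$ has image the inertia subgroup $I_q$ of $\mathrm{Gal}(F_{p,q}/F)$ at $q$. Indeed $K_Q=\Ker(Y_{\infty,Q}\to Y_\infty)$ is the $\Lambda$-submodule of $Y_{\infty,Q}=\mathrm{Gal}(M_\infty(Q)/\f_\infty)$ generated by the inertia subgroup at one prime of $\f_\infty$ above $q$ (the primes above $q$ are $\Gamma$-conjugate, and $q$ is unramified in $\f_\infty/F$, so inertia at $q$ inside $Y'_{\infty,Q}$ maps isomorphically into $Y_{\infty,Q}$); on passing to $\Gamma$-coinvariants all these conjugate inertia subgroups acquire the same image, namely $I_q$. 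Since $Y_{\infty,Q}/TY_{\infty,Q}\hookrightarrow\mathrm{Gal}(F_{p,q}/F)$, injectivity of $K_Q/TK_Q\to Y_{\infty,Q}/TY_{\infty,Q}$ is therefore equivalent to $|I_q|=|K_Q/TK_Q|=e(q)$, i.e. to the inertia image at $q$ being as large as possible. Finally I would invoke the Remark after Theorem~\ref{il}: Leopoldt's conjecture for $(F,p)$ holds if and only if $|I_q|=e(q)$ for every prime $q$ of $F$ prime to $p$ (for $q$ with $p\nmid N(q)-1$ this is vacuous, since then $e(q)=1$ and $K_Q=0$, consistently with (\ref{exact2}) being trivially exact mod $T$). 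Combining the two, (\ref{exact2}) remains exact modulo $T$ for every choice of $q$ precisely when $|I_q|=e(q)$ for all $q$, which is Leopoldt's conjecture. The main obstacle I anticipate is the bookkeeping in the third step: pinning down, with correct indices, that the reduction mod $T$ of the inclusion $K_Q\hookrightarrow Y_{\infty,Q}$ lands on the inertia subgroup at $q$ in $\mathrm{Gal}(F_{p,q}/F)$ — in particular handling primes $q$ that split in $\f_\infty/F$ and matching the normalization of $e(q)$ with the power of $p$ dividing $u^{p^b}-1$. Once the class field theory dictionary is set up carefully, the rest is formal.
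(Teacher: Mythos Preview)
Your proposal is correct and follows exactly the paper's approach: the paper's one-line proof simply asserts that exactness of (\ref{exact2}) modulo $T$ is equivalent to the inertia image at $q$ in $\mathrm{Gal}(F_{p,q}/F)$ having order $e(q)$, and then invokes Theorem~\ref{il}; you supply the cohomological and class-field-theoretic details behind that assertion. (One cosmetic point: writing $K_Q/TK_Q=\Zp/(N(q)-1)$ is a slight abuse---what you mean and use is that $v_p(u^{p^b}-1)=v_p(N(q)-1)$, so $|K_Q/TK_Q|=e(q)$.)
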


\begin{proof}
 Note that the sequence (\ref{exact2})  remains exact  on going modulo $T$ if and only if the image of an inertia group above $q$ in  ${\rm Gal}(F_{p,q}/F)$ 
 is of order the $p$-part of $N(q)-1$, namely $e(q)$. Then we are done by the equivalence of Theorem \ref{il}.
\end{proof}

It is interesting to note that in the odd case  the sequence  (\ref{exact}) remains exact on going modulo $T$, while
its splitting up to isogeny  (for all $Q$) is equivalent to Leopoldt's conjecture. In the even case, the exact sequence
(\ref{exact2}) does split up to isogeny,  as shown  by Greeenberg in loc. cit. using Kummer  theory, but its remaining exact on going modulo $T$ is equivalent to Leopoldt's conjecture. Iwasawa's criterion, and the one in this paper, are dual in a sense that gains precision using considerations in the next section.

\section{ Our criterion using Poitou-Tate}

The criteria for Leopoldt's conjecture contained in Cor. \ref{implication1} and Prop. \ref{splittingatp}  above may also be derived from formulas  of Greenberg and Wiles in Galois cohomology, which are consequences of  the Poitou-Tate exact sequence. We apply these formulas to $\Q_p$-representations where they are also valid.

For a finite dimensional vector space $V$ over $\Q_p$  endowed with a continuous action 
of $G_F$ that is everywhere almost unramified, and a set of Selmer conditions $\mathcal L=\{\mathcal L_v\}$ for $\mathcal L_v \subset H^1(F_v,V)$  with $\mathcal L_v$ almost everywhere the unramified subgroup we have the formula

$$h^1_{\mathcal L}(F,V)-h^1_{\mathcal L^\perp}(F,V^*(1))=h^0(F,V)-h^0(F,V^*(1))+\sum_v ({\rm dim}_{\Q_p}{\mathcal L}_v - h^0(F_v,V)).$$

We apply this formula for $V=\Q_p(1)$, and with the Selmer conditions $\mathcal L$ to be unramified everywhere, in particular trivial at places above $p$.  The Leopoldt conjecture is equivalent
to $h^1_{\mathcal L^\perp}(F,\Q_p)$, which is at least 1-dimensional,  having dimension 1. 
Let $\delta:= h^1_{\mathcal L^\perp}(F,\Q_p)-1$ the defect to Leopoldt conjecture.
We easily get the criterion for Leopoldt conjecture contained in Cor. \ref{implication1}  by choosing $Q=\{q_1,q_2\}$ so that $h^1_{\mathcal L_Q^\perp}(F,Q_p)=h^1_{\mathcal L^\perp}(F,Q_p)-2$ if 
$\delta\geq 1$, and by applying again the formula of Greenberg 
and Wiles replacing $\mathcal{L}$ by $\mathcal{L}_Q$.  Here ${\mathcal L}_Q$ are the Selmer conditions  that arise when we allow ramification at $Q$. We get that $h^1_{\mathcal L}(F,\Q (1))= 
h^1_{\mathcal L_Q}(F,\Q (1))$, which contradicts the existence of a 
$\Zp$-extension as in lemma \ref{obvious}.
To see the criterion for the Leopoldt conjecture contained in  Prop. \ref{splittingatp} we relax the Selmer conditions by allowing ramification at $p$.

This method allows us to get a refinement of Prop. \ref{splittingatp} as follows. Consider the Galois group $M_{\infty,p}$ of the maximal odd, abelian $p$-extension of $\f_\infty$ which is unramified outside $p$. Then we have an exact sequence
$$0 \rightarrow T_p \otimes \Q_p \rightarrow M_{\infty,p}\otimes{\Qp} \rightarrow X_{\infty}^- \otimes \Q_p \rightarrow 0.$$ Here $T_p$ is the group generated by inertia at primes above $p$.  Consider any  odd character $\psi:{\rm Gal}(\f_\infty/F) \rightarrow \aQp^*$,  and assume 
that  the Selmer group $H^1_{\mathcal L}(G_F,\chi\psi^{-1})=0$ with the Selmer conditions being that of unramified everywhere. The vanishing would  follow from  Greenberg's conjecture that the Sylow $p$-subgroup of  the class group of $F_\infty$ is finite  as $\chi\psi^{-1}$ is an even character.  (For the case $\psi=\chi$ which corresponds to Prop. \ref{splittingatp} this just follows from finiteness of class numbers of number fields.) The character $\psi$ gives rise   to a prime ideal $P_\psi$  of height one of $\Lambda_\Oc$ where $\Lambda \otimes_{\Z_p} \Oc$ for  the ring of integers $\Oc$ of a $p$-adic field that contains values of $\psi$. Then  again using the Greenberg-Wiles formula one sees that the localisation of the above exact sequence (tensored with $\Lambda_\Oc$)  at $P_\psi$ is split as $(\Lambda_\Oc)_{(P_\psi)}$-modules  if and only if $(X_\infty^-)_{(P_\psi)}=0$. Thus every eigenspace of $X_\infty^- \otimes \Q_p$, assuming Greenberg's conjecture, intertwines  with ramification at $p$.

\section{Appendix}
 P. Colmez  showed us a  nice  argument using $L$-functions which,  assuming $F/\Q$ is a totally real finite Galois extension of $\Q$, proves that if Leopoldt's conjecture is false then $\zeta_{F,p}(s)$ has to vanish at $s=1$ where $\zeta_{F,p}(s)$ is the Deligne-Ribet $p$-adic $L$-function of $F$.
We note that by \cite{Serre} and \cite{Colmez},  the Leopoldt conjecture is true if and only if   $\zeta_{F,p}(s)$  has a pole at $s=1$.

We give Colmez's argument.  The $p$-adic zeta function  $\zeta_{F,p}(s)$ has a factorisation into certain $p$-adic Artin L-functions (cf. \cite{Artin}) $$\zeta_{F,p}(s)=\Pi_{\chi} L_{\Q,p}(s,\chi)^{\chi(1)},$$ with $\chi$ running through the irreducible $p$-adic  representations of ${\rm Gal}(F/\Q)$.    Note that  for $\chi$ a non-trivial representation, $L_{F,p}(s,\chi)$ is entire by the $p$-adic form of the Artin conjecture which  is proved in \cite{Artin} to  follow from the main conjecture. The factor for $\chi$ the trivial representation has a simple pole at $s=1$, and for other non-trivial abelian characters $\chi$, the corresponding factor does not vanish at $s=1$, by the known case of the Leopoldt conjecture for abelian extensions of $\Q$ (cf. \cite{Brumer}). Thus if the Leopoldt conjecture is false  for $F,p$,  for a representation $\chi$ of dimension at least 2, $L_{\Q,p}(1,\chi)$ vanishes and this by the factorisation formula forces $\zeta_{F,p}(1)$ to vanish.

We now  give a simple algebraic argument,  to deduce from the known cases of the Leopoldt  conjecture for abelian extensions of $\Q$,   that the Leopoldt defect $\delta_{F,p}$  can never be 1 for $F/\Q$ a totally real finite  Galois extension.  This could  well be known to the experts.  It  is an apparent strengthening of Colmez's result as it could conceivably happen that $\delta_{F,p}=1$ while $\zeta_{F,p}(1)=0$ (``non-semisimplicty of Leopoldt zeros'').  It will be nice to remove the assumption that $F/\Q$ is Galois;  this will require other methods.

\begin{prop}
 For $F/\Q$ a totally real  finite Galois extension and a prime $p$, the Leopoldt defect
 $\delta_{F,p}$ is never 1.
\end{prop}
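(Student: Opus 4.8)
The plan is to decompose the Leopoldt defect into a sum of contributions from the irreducible $\Q_p$-representations of $G=\mathrm{Gal}(F/\Q)$, and to show that each contribution is either $0$ or $\geq 2$, so that a total of $1$ is impossible. Concretely, consider the $\Q_p[G]$-module $V := H^1_{\mathcal L^\perp}(\Q, \mathrm{Ind}_{G_F}^{G_\Q}\Q_p) = H^1_{\mathcal L^\perp}(F,\Q_p)$, equivalently the $\Q_p$-dual of the maximal abelian pro-$p$ extension of $F$ unramified outside $p$, tensored with $\Q_p$; this carries a natural action of $G$ since $F/\Q$ is Galois. Its $G$-invariants are $1$-dimensional (coming from the cyclotomic $\Z_p$-extension of $\Q$), and $\delta_{F,p} = \dim_{\Q_p} V - 1$. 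The first step is to write $V \cong \bigoplus_\chi V_\chi$, where $\chi$ runs over irreducible $\Q_p$-representations of $G$ and $V_\chi$ is the $\chi$-isotypic part; by Frobenius reciprocity/Shapiro, $\dim V_\chi / (\dim \chi)$ equals the dimension of a Selmer-type $H^1$ attached to the Artin representation $\chi$, which I will compute via the Greenberg--Wiles formula exactly as in the previous section of the paper, applied to the representation $\chi^* = \chi\otimes\chi_{\mathrm{cyc}}^{-1}\cdot\chi_{\mathrm{cyc}}$ (i.e. to $\Q_p(\chi)$ with everywhere-unramified Selmer conditions and dual conditions $\mathcal L^\perp$).

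Next I would identify which $\chi$ contribute. Running the Greenberg--Wiles local Euler characteristic formula for $V=\Q_p(1)\otimes\chi$ shows that the $\chi$-contribution to $V$ vanishes unless $\chi$ is ``totally even'' in the relevant sense (so the local terms at the archimedean place do not kill it); the trivial character contributes exactly the $1$-dimensional piece (finiteness of the class number of $\Q$, i.e. Leopoldt for $\Q$), and every nontrivial abelian character $\chi$ of $G$ contributes $0$, since Leopoldt's conjecture is known for abelian extensions of $\Q$ by Brumer's theorem (\cite{Brumer}) — this is where the known cases enter. Therefore $\delta_{F,p}=\dim V - 1 = \sum_{\chi\ \mathrm{irred.,\ }\dim\chi\geq 2} \dim V_\chi$, a sum over nonabelian irreducibles. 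The final step is the observation that each such $V_\chi$ is a $\Q_p[G]$-submodule which is $\chi$-isotypic, hence its dimension is a positive multiple of $\dim\chi\geq 2$ whenever it is nonzero; consequently $\delta_{F,p}\in\{0\}\cup\{n : n\geq 2\}$ and in particular $\delta_{F,p}\neq 1$.

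The main obstacle, and the place where care is required, is the identification of exactly which isotypic components can be nonzero and the bookkeeping of the local terms in the Greenberg--Wiles formula when $\chi$ is not defined over $\Q_p$ but over a finite extension $\Oc$: one must either work over $\aQp$ and group Galois-conjugate characters together (so that $\dim_{\Q_p} V_\chi$ is a multiple of $[\Q_p(\chi):\Q_p]\cdot\dim\chi$, still $\geq 2$ in the nonabelian case), or phrase everything in terms of $\Q_p$-irreducible constituents from the start. A secondary subtlety is making sure the argument genuinely uses nothing beyond the abelian case of Leopoldt: the vanishing $V_\chi=0$ for nontrivial abelian $\chi$ must be deduced purely from Brumer, not from anything about $F$ itself, and for nonabelian $\chi$ we claim nothing — we only need that $\dim V_\chi$, whatever it is, is a multiple of $\dim\chi$. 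Once the representation-theoretic decomposition is set up cleanly and the Greenberg--Wiles computation is invoked verbatim from the preceding section, the conclusion $\delta_{F,p}\neq 1$ is immediate.
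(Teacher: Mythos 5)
Your proposal is correct, but it takes a genuinely different route from the paper. The paper argues by contradiction: assuming $\delta_{F,p}=1$, the Galois group $L=\mathrm{Gal}(N/F)$ of the compositum of $\Z_p$-extensions is $\Z_p^2$, and since it contains the cyclotomic line as a $\mathrm{Gal}(F/\Q)$-stable summand, the action of $\mathrm{Gal}(F/\Q)$ on $\Q_p\otimes L$ is an extension of a character $\eta$ by the trivial character and hence factors through the abelian quotient $\mathrm{Gal}(F/\Q)/\ker\eta$; a group-cohomological lemma (on lifting free $\Z_p$-quotients across finite extensions) then descends $N$ to a $\Z_p^2$-extension of the abelian field $F_\eta$, contradicting Brumer. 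Your approach instead passes through Shapiro's lemma and the isotypic decomposition of the Selmer group $H^1_{\mathcal L^\perp}(F,\Q_p)=H^1_{\mathcal L^\perp}(\Q,\Q_p[\mathrm{Gal}(F/\Q)])$, identifying each $\chi$-contribution as $(\dim\chi)\cdot h^1_{\mathcal L^\perp}(\Q,\chi)$, killing the non-trivial abelian pieces by Brumer, keeping the trivial piece as the one forced cyclotomic dimension, and observing that the remaining non-abelian pieces contribute multiples of $\dim\chi\geq2$. Both arguments hinge on the same input (Leopoldt for abelian extensions of $\Q$), but the paper's is a bare-hands descent using one group-theoretic lemma, whereas yours is representation-theoretic and arguably yields a sharper structural statement: $\delta_{F,p}$ is a non-negative integer combination of degrees of non-abelian irreducible characters of $\mathrm{Gal}(F/\Q)$, hence lies in $\{0\}\cup\{n\geq2\}$. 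One small remark: the invocation of the Greenberg--Wiles formula is not really load-bearing in your argument --- the parity/``totally even'' clause is vacuous since $F$ is totally real, and the vanishing for abelian $\chi$ comes from Brumer, not from the Euler characteristic formula --- so you could drop it and the proof would still stand on Shapiro plus Brumer plus the isotypic-multiplicity observation.
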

\begin{proof} Suppose that  $\delta_{F,p}=1$. Let us call $N$ the compositum
of the $\Zp$-extensions of F. Let $L= \mathrm{Gal}(N/F)$ : $L$ is a free 
$\Zp$-module of rank 2.
The inclusion $F_\infty \subset N$ gives a surjective morphism $L\rightarrow \Gamma$.
 Let $H_1$ be the Galois group of $F/\Q$. We see that the action of $ H_1$
 on $\Qp\otimes L$ factors though the character $\eta$ giving the action 
 of $H_1$ on $\Qp \otimes (L/\Gamma)$. Hence the action of $H_1$ on $L$ factors through 
 $\eta$. Let $H$ be the kernel of $\eta$ and $F_\eta$ the field corresponding to $H$.
 The next lemma implies the existence of an extension $N'$ of $F_\eta$
 of Galois group a free $\Zp$-module of rank 2 such that $N=N'F$. This is impossible as $F_\eta$
 is abelian over $\Q$ and we know Leopoldt conjecture for $F_\eta$ and $p$.
 
 \begin{lemma} Let  
$ 1\rightarrow L\rightarrow G\rightarrow H\rightarrow 1$
be an exact sequence of profinite groups with $L $ a free $\Zp$-module  
of finite rank $d$. We suppose that $H$ is finite and acts trivially on $L$.
Then, there exists a free $\Zp$-module $L'$  of rank $d$ and a surjective 
morphism $G\rightarrow L'$.
 \end{lemma}
 
Let $o\in  H^2 (H, L)$ be the cohomology class defined by the extension.
Let us prove first that the conclusion of the lemma is equivalent to that there
exists  an inclusion $L\hookrightarrow L'$ of $\Zp$-modules of rank $d$  
such that the image $o'$ of $o$ in $H^2 (H, L')$ is trivial. 

Indeed, if there exists $L\hookrightarrow L'$  such that $o'$ is trivial, the 
pushout exact sequence $ 1\rightarrow L'\rightarrow G'\rightarrow H\rightarrow 1$ has 
a trivialisation $G'\rightarrow L'$. If we compose it with the morphism $G\rightarrow G'$
we get a morphism $G\rightarrow L'$ that coincide on $L$ with the inclusion 
$L\hookrightarrow L'$.

Conversely, if we have a surjection $G\rightarrow L'$, its restriction to
$L$ has a finite index image as $H$ is finite. This implies that 
this restriction is injective.  The morphism $G\rightarrow L'$
extends to the pushout $G'$ as a trivialisation of the pushout exact sequence.

Let us prove the lemma when $H$ is a $p$-group.
Let us prove it in this case by induction on the cardinality of $H$.
If $H$ is trivial, there is nothing to prove.
Otherwise, let $H'\subset H$
be a central subgroup of order $p$. Let $G''$ be the inverse image 
of $H'$ in $G$. As the action of $H$ on $L$ is trivial and 
$H'$ is cyclic, the group $G''$ is abelian.  If $G''$ has no torsion, 
we can apply the induction hypothesis to the exact sequence 
$1 \rightarrow G''\rightarrow G\rightarrow H/H'\rightarrow 1$
to get a surjective morphism of $G$ in $(\Zp )^d$. 
If $G''$ has torsion $T$, $T$ is cyclic of order $p$ and 
$G\rightarrow H$ induces an isomorphism of $T$ to $H'$.
We apply the induction hypothesis to the exact sequence 
$1 \rightarrow L\rightarrow G/T\rightarrow H/H'\rightarrow 1$.
We get a surjective morphism of  $G/T $ to $(\Zp ) ^d$
hence a surjective morphism of $G$ to $(\Zp ) ^d$.

Let us prove the lemma in the general case. Let $H_p$ a $p$-Sylow 
of $H$. Let $L\hookrightarrow L'$ be such that the image of the restriction of 
$o'$ to $H_p$ vanishes. The morphism $H^2 (H, L')\rightarrow H^2 (H_p , L')$
is injective. This follows from the injectivity of the maps 
$H^2 (H, L'/p^n L')\rightarrow H^2 (H_p , L'/p^n L')$ and Mittag-Leffler.
We see that $o'$ is trivial and this proves the lemma. 

\end{proof}

\nocite{*}
\bibliographystyle{plain}

\end{document}